\documentclass[11pt]{amsart}
\usepackage{amssymb, amsmath, amsthm}
\usepackage{color}
\usepackage{graphicx}
\usepackage[final]{hyperref}
\usepackage[a4paper, centering]{geometry}
\usepackage{pdfsync}

\usepackage[mathscr]{eucal}
\usepackage{lmodern}


\newif\ifextended
\extendedfalse

\newtheorem{teor}{Theorem}[section]
\newtheorem{prop}[teor]{Proposition}

\theoremstyle{definition}
\newtheorem{Definition}[teor]{Definition}
\theoremstyle{remark}
\newtheorem{rk}[teor]{Remark}

\long\def\elimina#1{}

\def\R{\mathbb{R}}

\def\N{\mathbb{N}}
\def\Z{\mathbb{Z}}

\def\haus{\mathcal{H}^{1}}
\def\leb{\mathcal{L}^{2}}

\def\Lip{\textrm{Lip}}

\def\ne{\nu^E}
\def\curf{\kappa_\varphi}

\def\lum{\dfrac{\ell_1}{2}}
\def\ldm{\dfrac{\ell_2}{2}}

\def\dist{d_\varphi}

\def\geps{g_\varepsilon}

\def\oray#1{{]\!]#1[\![}}
\def\cray#1{{[\![#1]\!]}}

\DeclareMathOperator{\inte}{int} 
 \DeclareMathOperator{\dive}{div}

\title[Crystalline evolutions with oscillating forcing terms]%
{Crystalline evolutions with rapidly oscillating forcing terms}%
\author[A.~Braides]{Andrea Braides$^\dagger$}
\thanks{$^\dagger$ Dipartimento di Matematica, Università di Roma ``Tor 
Vergata'', via della Ricerca scientifica, 00133 Roma, Italy, email: 
braides@mat.uniroma2.it }
 
\author[A.~Malusa]{Annalisa Malusa$^\ddagger$}
\thanks{$^\ddagger$ Dipartimento di Matematica ``G.\ Castelnuovo'', Sapienza 
Università di Roma, Piazzale Aldo  Moro 2, 00185 Roma, Italy, email: 
malusa@mat.uniroma1.it}

\author[M.~Novaga]{Matteo Novaga$^\S$}
\thanks{$^\S$ Dipartimento di Matematica, Università di Pisa, Largo Pontecorvo 
5, 56216 Pisa, Italy, email: matteo.novaga@unipi.it}

\keywords{Crystalline flow, homogenization, facet breaking, pinning.}
\date{\today}
%
%
%


\begin{document}

\begin{abstract}
We consider the evolution by crystalline curvature of a planar set in a stratified medium,
modeled by  a periodic forcing term. 
We characterize the limit evolution law as the period of the oscillations tends to zero. 
Even if the model is very simple, the limit evolution problem is quite rich, and 
we discuss some properties 
such as uniqueness, comparison principle and pinning/depinning phenomena. 
\end{abstract}

\maketitle

\tableofcontents


\section{Introduction}

In this paper we are interested in the asymptotic behaviour of motions of planar curves according to the law
\begin{equation}\label{eqevol}
v = \curf + g\left(\frac{x}{\varepsilon}\right),
\end{equation}
where $v$ is the normal velocity, $\curf$ is the crystalline square curvature 
(see Definition \ref{d:ch} for precise definitions), $g:\R\to \R$ is a forcing term depending only on the horizontal variable $x$, and $\varepsilon>0$ is a small parameter modeling the rapidly oscillating medium where the curve evolves. For simplicity, we shall assume that $g$ is 1-periodic and takes only two values $\alpha<0<\beta$, with average $\frac{\alpha +\beta}{2}$.

Crystalline evolutions play an important role in many models of phase 
transitions and Materials Science
(see \cite{Gu,Ta} and references therein) and have been significantly studied in recent years
(see for instance \cite{AT,GG,BNP1,BNP2,Gi}). The term $g\left(\frac{x}{\varepsilon}\right)$
models a heterogeneous layered medium, which we assume periodic exactly in one of 
the direction orthogonal to the Wulff shape of the crystalline perimeter (in
our case, for simplicity, the $x$-direction). 
Our aim is understanding the effect of the oscillations in the asymptotic limit $\varepsilon \to 0$,
which is a typical homogenization problem. 

We will show that, at scale $\varepsilon$, the curves may undergo a microscopic 
`facet breaking' phenomenon, with small segments of length proportional 
to $\varepsilon$ being created and reabsorbed.  After dealing with this aspect, 
we show that the motion of a limit of curves satisfying 
\eqref{eqevol} can be characterized by different laws of motion in the $x$ 
and $y$ directions: the portions of the curve moving in the vertical direction 
travel with a velocity equal to $\curf + \frac{\alpha+\beta}{2}$,
whereas the portions moving in the horizontal direction are either
pinned or travel with velocity 
equal to $\langle \curf + g\rangle$, where $\langle \cdot\rangle$
denotes the harmonic mean (see Theorem \ref{t:effpoly}).
Note that such law does not correspond to a forced crystalline evolution.

Our analysis can be set in a large class of variational evolution problems
dealing with limits of motions driven by functionals $F_\varepsilon$ depending
on a small parameter \cite{Br}. In some cases, the 
limit motion can be directly related to the $\Gamma$-limit of $F_\varepsilon$
(see, e.g., \cite{SS,CG,BCGS,BL}), but in general this is not the case.
For oscillating functionals, the energy landscape of the energies $F_\varepsilon$
can be quite different from that of their $\Gamma$-limit and the related
motions can be influenced by the presence of local minima which may give rise
to `pinning' phenomena (motions may be attracted by those local minima) or
to effective homogenized velocities \cite{NV,BGN}. In the case of geometric motions, a 
general understanding of the effects of microstructure is still missing.
Recently, some results have been obtained for (two-dimensional) crystalline
energies, for which a simpler description can be sometimes given in terms of a system
of ODEs. Such results include discrete approximations of crystalline energies,
which can be understood as a simple way to introduce a periodic dependence, corresponding
to that of an underlying square lattice \cite{BGN,BSc,BCY,BSo}. 
Such discrete energies correspond to continuum inhomogeneous perimeter 
energies of the form
\[
F_\varepsilon(E)=\int_{\partial E} a\Bigl(\frac{x}{\varepsilon},\frac{y}{\varepsilon}\Bigr)
\bigl( |\nu_1^E|+|\nu_2^E|\bigr) \, d\haus,\qquad E\subset\R^2,
\]
converging to the square perimeter in the sense of $\Gamma$-convergence \cite{GCB,AB},
and the corresponding geometric motions can be studied using the minimizing-movement 
approach introduced by Almgren, Taylor and Wang \cite{ATW}. The suitably defined limit motions \cite{Br}
correspond to modified crystalline flows. 

In our case, equation \eqref{eqevol} corresponds to the $L^2$-gradient flow for the
energy functional  
\[
F_\varepsilon(E)=\int_{\partial E} 
\bigl( |\nu_1^E|+|\nu_2^E|\bigr) \, d\haus +\int_E 
g\Bigl(\frac{x}{\varepsilon}\Bigr)\ 
d\leb,\qquad E\subset\R^2,
\]
where we identify the evolving curve with the boundary of a set $E$.
Notice that, since the volume term converges to 
$\frac{\alpha +\beta}{2}\leb(E)$, the ($\Gamma$-)limit as $\varepsilon \to 0$ of the 
functionals $F_\varepsilon$ is the functional
\[
\overline{F}(E)=\int_{\partial E} 
\bigl( |\nu_1^E|+|\nu_2^E|\bigr)\, d\haus +\dfrac{\alpha +\beta}{2}\leb(E).
\]
As a consequence of our analysis, it turns out that the 
asymptotic behavior as
$\varepsilon \to 0$ of the evolutions corresponding to \eqref{eqevol}
does not coincide with the gradient flow of 
$\overline{F}$, which is $v=\curf + \frac{\alpha+\beta}{2}$.
Even in the case $\alpha+\beta=0$ the limit motion cannot be derived as a
 gradient flow of a modified (crystalline) perimeter.

Note that, if the crystalline curvature is replaced by the usual curvature and the evolving curve is a unbounded 
graph in the vertical direction, so that it never travels horizontally, the analogous homogenization
problem have been studied in \cite{CNV}, where it is proved that the limit evolution law is indeed 
the curvature flow with a constant forcing term. However, if the curve is a graph in the horizontal direction the problem is more complicated and a general convergence result is still not available.
In this respect, our work is a step in that direction.
We mention, however, that a complete analysis of the asymptotic behavior of the semlinear problem
\[
u_t = u_{xx} +  g\left(\frac{u}{\varepsilon}\right),
\]
can be found in \cite{CDN}. That problem has some features in common with our geometric evolution,
as can be seen as the linearization of the isotropic version of \eqref{eqevol}.

\smallskip

The plan of the paper is the following: in Section \ref{s:sett} we introduce
the notion of crystalline curvature and the evolution problem we want to study. 
In Section \ref{s:calibrability} we introduce
the notion of calibrable edge, that is, an edge of the curve which does not break during the evolution, 
and we characterize the calibrability of an edge in terms of its length and of the position of its endpoints. Finally, in Section \ref{s:eff} we characterize the limit evolution law as 
$\varepsilon\to 0$ first for rectangular sets, then for polyrectangles, and eventually for more general sets, including bounded convex sets.


\section{Setting of the problem}\label{s:sett}

\subsection{Notation}

Given $\xi$ $\eta\in \R^2$, we denote by $\xi\cdot\eta$ the usual scalar 
product between $\xi$ and $\eta$ and by $\oray{\xi,\eta}$ (respectively, $\cray{\xi,\eta}$)
the open (respectively, closed) segment joining $\xi$ with $\eta$. 
The canonical basis of $\R^2$ will be denoted by $e_1=(1,0)$, $e_2=(0,1)$.
 
The 1--dimensional 
Hausdorff measure and the 2--dimensional Lebesgue measure in $\R^2$ will be 
denoted by $\haus$ and $\leb$, respectively.

We say that a set $E\subseteq \R^2$ is a  {\em  Lipschitz set} if $E$ is open and $\partial 
E$  can be written, locally, as the graph of a Lipschitz function 
(with respect to a suitable orthogonal coordinate system). 
The  {\em  outward normal} to $\partial E$ at $\xi$, that exists $\haus$--almost everywhere on 
$\partial E$, will be denoted by  $\ne(\xi)=(\nu_1^E,\nu_2^E)$.

The Hausdorff distance between the two sets
$E,\ F\in \R^2$ will be denoted by $d_H(E,F)$.

\subsection{The crystalline square curvature}

We briefly recall how to give a notion of mean curvature $\kappa^E$ 
which is 
consistent with the requirement that a geometric evolution $E(t)$ that reduces 
as fast as possible the energy functional
\[
P(E)= \int_{\partial E} 
\bigl(|\nu_1^E|+|\nu_2^E|\bigr)\, d\haus
\]
has normal velocity $\kappa^{E(t)}$ 
$\haus$--almost everywhere on $\partial E(t)$. 

The functional $P(E)$ turns out to be the perimeter associated to
the norm 
$\varphi(x,y)=\max\{|x|,|y|\}$, $(x,y)\in\R^2$, that is  the Minkowski content 
derived by considering $(\R^2,\varphi)$ as a normed space. The density
$\varphi^\circ(x,y)=|x|+|y|$ is the polar function of $\varphi$, defined by 
$\varphi^\circ(\xi^\circ):=\sup \{\xi\cdot\xi^\circ,\ \varphi(\xi)\leq 1\}$.

The sublevel sets (or Wulff shapes) $\{\phi(\xi)\leq 1\}$ and 
$\{\phi^\circ(\xi)\leq 1\}$ are the 
square 
$K=[-1,1]^2$, and the square with corners at $(\pm 1,0)$ and $(0,\pm 1)$, 
respectively.

Given a nonempty compact set $E\subseteq \R^2$, we denote by $\dist^E$ the 
{\em oriented  $\varphi$--distance function} to $\partial E$, negative inside 
$E$, that 
is
\[
\dist^E(\xi)=\inf_{\eta\in E}\varphi(\xi-\eta) - \inf_{\eta\not\in  
E}\varphi(\xi-\eta), 
\qquad \xi\in\R^2.
\]
The function $\dist^E$ is Lipschitz, $\varphi^\circ(\nabla 
\dist^E(\xi))=1$ at each its differentiability point $\xi$, and 
\[
\nabla \dist^E(\xi)=\frac{\nu_E(\xi)}{ \varphi^\circ(\nu_E(\xi))}
\]
at every $\xi\in\partial E$ where $\nu_E$ is well defined.

Due to the lack of uniqueness of the projection on $\partial E$ with respect to 
the 
distance $\dist^E$, the intrinsic normal direction is not uniquely determined, 
in general, even if the set $E$ is smooth. It is known that the normal cone at 
$\xi\in\partial E$ is well defined whenever $\xi$ is a differentiability point 
for $\dist^E$ and it is given by $T_{\phi^\circ}\bigl(\nabla\dist^E(\xi)\bigr)$,
where 
\[
T_{\varphi^\circ}(\xi^\circ) :=\{\xi\in \R^2, 
\xi\cdot\xi^\circ=(\varphi^\circ(\xi))^2\}, \quad \xi^\circ\in\R^2\,.
\]
\begin{figure}[h!]
\includegraphics[height=3.5cm]{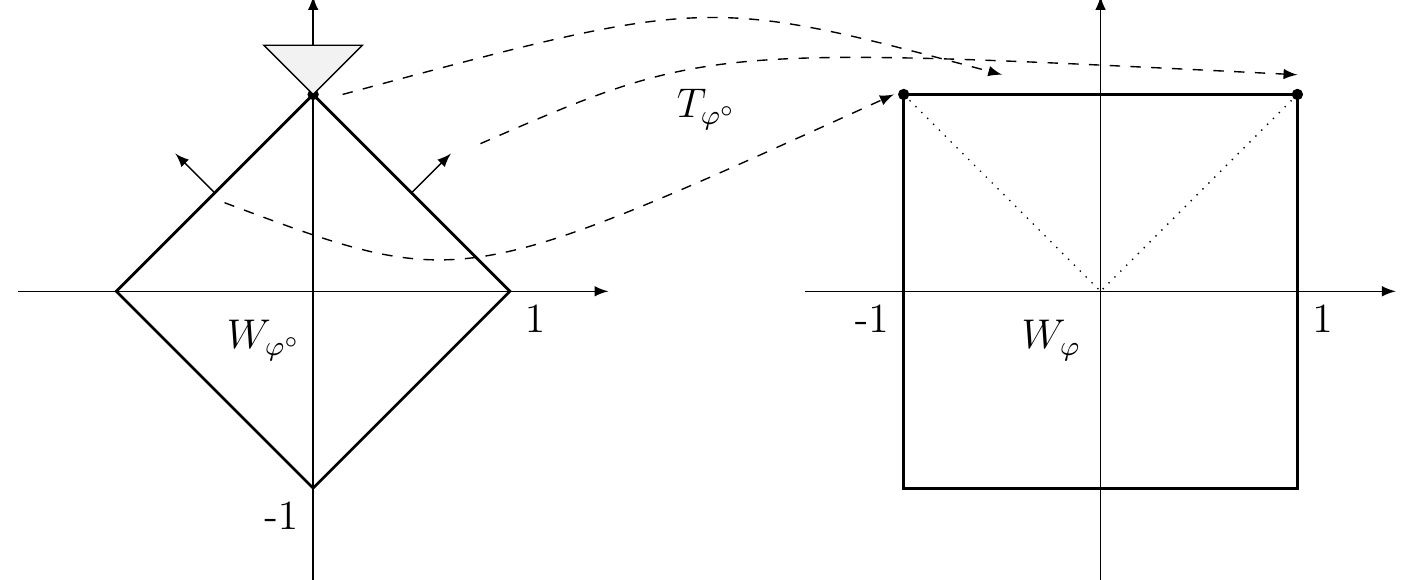}
\caption{The square crystalline norm $\varphi$ and the map 
$T_{\varphi^\circ}$}\label{fig:fig5}
\end{figure}

If $\varphi^\circ(\xi^\circ)=1$, a direct computation gives
\begin{equation}\label{e:dualcris}
T_{\varphi^\circ}(\xi^\circ)  =
\begin{cases}
(1,1) & \xi^\circ\in\oray{e_1,e_2} \\
(1,-1) & \xi^\circ\in\oray{e_1,-e_2}\\
(-1,-1) & \xi^\circ\in\oray{e_2,-e_1}\\
(-1,1) & \xi^\circ\in\oray{-e_2,-e_1} 
\end{cases} 
\qquad 
\begin{cases}
T_{\varphi^\circ}(e_1) & =\cray{(1,1),(1,-1)}, \\
T_{\varphi^\circ}(e_2) & =\cray{(-1,1),(1,1)},\\ 
T_{\varphi^\circ}(-e_1) & =\cray{(-1,1),(-1,-1)},\\ 
T_{\varphi^\circ}(-e_2) & =\cray{(-1,-1),(1,-1)}.
\end{cases}
\end{equation}
(see Figure \ref{fig:fig5}).

The notion of intrinsic curvature in $(\R^2, \varphi)$ is based on the 
existence of regular selections of 
$T_{\phi^\circ}\bigl(\nabla\dist^E\bigr)$ on $\partial E$.

\begin{Definition}[$\varphi$--regular set, Cahn--Hoffmann field, mean 
$\varphi$--curvature]\label{d:ch}
We say that an open set $E\subseteq \R^2$ is {\em  $\varphi$--regular} if 
$\partial E$ is 
a compact Lipschitz curve, and there exists a 
vector field $n_\varphi\in \Lip(\partial E;\R^2)$ such that 
$n_\varphi \in T_{\varphi^\circ}(\nabla \dist^E)$ $\haus$--almost everywhere in 
$\partial E$.

\noindent Any such a selection of the multivalued 
function $T_{\varphi^\circ}(\nabla \dist^E)$
on $\partial E$ is called  a {\em  Cahn--Hoffmann vector field} for $\partial 
E$  
associated to 
$\varphi$, and $\curf=\dive n_\varphi$ is the related {\em  mean 
$\varphi$--curvature (crystalline square mean curvature) of $\partial E$}.
\end{Definition}

\begin{rk} Notice that, unlike the outer (euclidean) unit normal $\ne$, a 
Cahn--Hoffmann field is 
defined everywhere on $\partial E$. 
\end{rk}

\begin{rk}
Any Cahn--Hoffmann vector field $n_\varphi$ has 
gradient orthogonal to the normal direction, so that $\dive n_\varphi$ equals 
the 
tangential divergence of the field.
\end{rk}

\begin{figure}[h!]
\includegraphics[height=3cm]{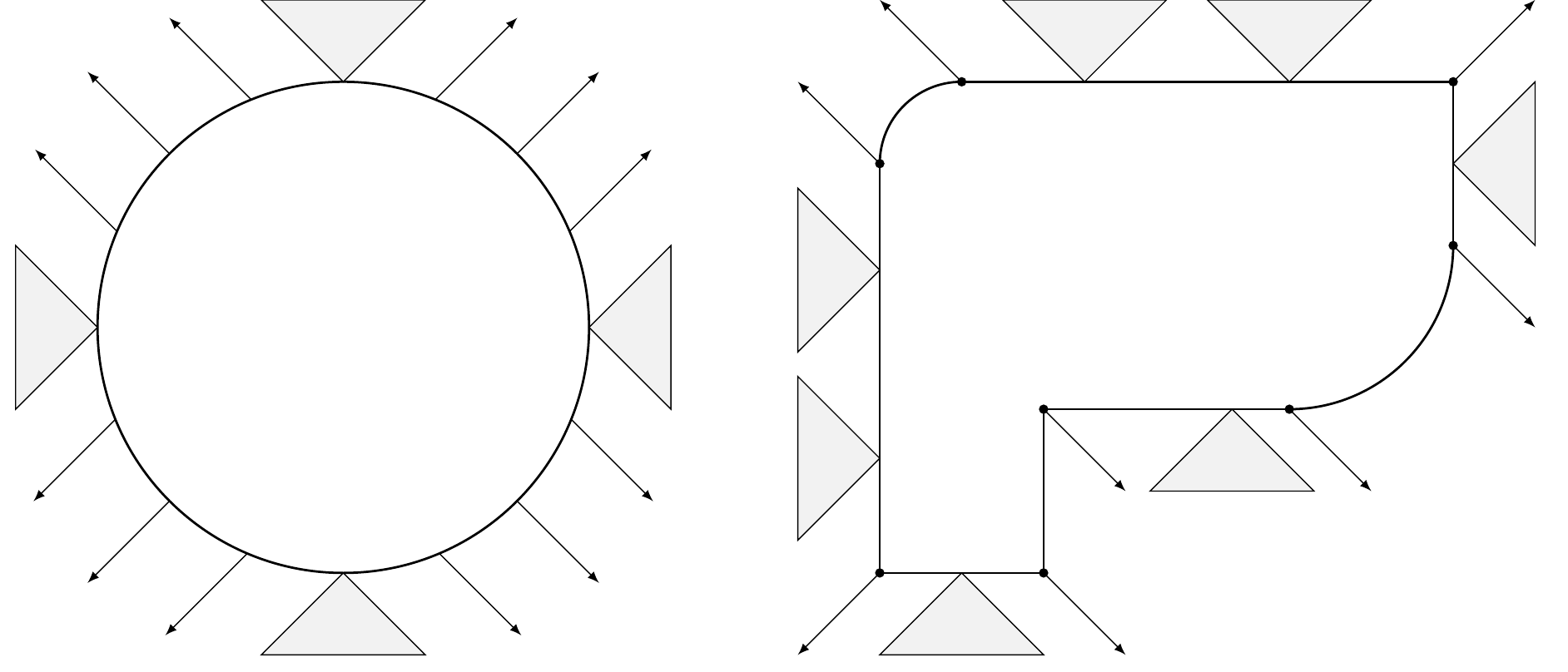}
\caption{A non $\varphi$--regular set (left) and a $\varphi$--regular set 
(right) }\label{fig:fig6}
\end{figure}

\begin{rk}[Edges and vertices]
The boundary of a $\varphi$--regular set $E$ is given by a finite number
of closed arcs with the property that $T_{\varphi^\circ}(\nabla \dist^E)$
is a fixed set $T_A$ in the interior of each arc $A$ (see Figure 
\ref{fig:fig6}). This set 
$T_A$ 
is either a singleton,  if the arc $A$ is not a horizontal or vertical 
segment, or one of the closed convex cones described in \eqref{e:dualcris}. 
The arcs of $\partial E$ which are straight horizontal or 
vertical segments 
will be called {\em  edges}, and the endpoints of every arc will be called 
{\em  vertices} of $\partial E$.
\end{rk}

\begin{rk}\label{r:nvertex}
The requirement of Lipschitz continuity keeps the  value of every 
Cahn--Hoffmann vector field fixed at vertices.
Hence, in 
order to exhibit a Cahn--Hoffmann vector field $n_\varphi$ on $\partial E$ it 
is enough to construct a field $n_A\in \Lip(A;\R^2)$ on each arc $A$, with the 
correct values 
at the vertices, and satisfying the constraint $n_A 
\in T_A$.
In what follows, with a little abuse of notation, we shall call $n_A$ the 
Cahn--Hoffmann vector field on the arc $A$.
\end{rk}

\subsection{Forced crystalline curvature flow}

The  appropriate way for describing a geometric evolution $E(t)$,
starting from a given $\varphi$--regular set $E$, and
trying to reduce as fast as 
possible the energy functional 
\[
F(E(t))=P_\varphi(E(t))+V(E(t))=\int_{\partial E(t)}\bigl( 
|\nu_1^E(t)|+|\nu_2^E(t)|\bigr)\, d\haus +\int_{E(t)} 
f\ 
d\leb,
\] 
is a suitable weak version of the law
\[
v= \kappa_\varphi+f, \qquad \text{on}\ E(t)
\]   
where $v$ is the scalar velocity of $\partial E(t)$ along 
the normal direction.

\begin{Definition}
Given $T>0$, we say that a family $E(t)$, $t\in [0,T]$, is a  
{\em  crystalline mean 
curvature flow 
in $[0,T)$  with forcing term $f\in L^{\infty}(\R^2)$}
if 
\begin{itemize}
\item[(i)] $E(t)\subseteq\R^2$ is a Lipschitz set for every $t\in [0,T)$;
\item[(ii)] there exists an open set $A\subseteq \R^2\times [0,T)$ such that
$\bigcup_{t\in [0,T)}\partial E(t)\times \{t\}\subseteq A$, and the function
$d(\xi,t)\doteq d^{E(t)}_\varphi(\xi)$ is locally Lipschitz in $A$;
\item[(iii)] there exists a function $n \in L^\infty(A,\R^2)$, with  
$\dive n\in L^\infty(A)$, such that the restriction of  $n(t, \cdot)$ to
$\partial E(t)$ is a 
Cahn--Hoffmann vector field 
for $\partial E(t)$ for every $t\in [0,T]$;
\item[(iv)] $\partial_t d-\dive n \in [f^-, f^+]$ 
$\haus$--almost everywhere in $\partial E(t)$ and for all $t\in[0,T)$, where
\[
f^+(\xi)=\text{ess} \limsup_{\eta\to \xi}f(\eta), \qquad 
f^-(\xi)=\text{ess} \liminf_{\eta\to \xi}f(\eta), \qquad \xi\in\R^2.
\]
\end{itemize}
\end{Definition}

As underlined in Remark \ref{r:nvertex}, 
the Cahn--Hoffmann vector field $n(t)$ on $\partial E(t)$ is obtained by
gluing suitable fields constructed on each arc of the boundary.
If an arc $A\subseteq \partial E(t)$ is not a edge,  the  
Cahn--Hoffmann vector field $n(t)$ is uniquely determined by the geometry 
of 
$A$, and the $\varphi$--mean 
curvatures of at each point of $A$ is zero. 
On the other hand, there are infinitely many choices for the vector 
field in the interior of the  
edges on $\partial E(t)$, and hence infinitely many 
different $\varphi$--mean curvatures associated.
Most of these choices are meaningless for the point of view  of the geometric 
evolution.
In order to overcome this 
ambiguity we fix the choice by a variational selection principle which turns 
out to be consistent with the curve shortening flow (see 
\cite{BNP1}, \cite{BNP2}, \cite{BNP3}, \cite{GR1}, \cite{GR2}).

\begin{Definition}[Variational forced crystalline curvature flow]
A {\em  variational forced crystalline curvature flow} is a forced crystalline 
curvature flow
$E(t)$, $t\in [0,T)$, such that for every $t\in [0,T)$ and for every edge
$L$ of $\partial E(t)$ the Cahn--Hoffmann vector field $n_{L}$ is 
the unique minimum of the functional
\[
\mathcal{N}_{L}(n)= \int_{L} |f-\dive n|^2\, d\haus
\] 
in the set
\[
D_L=\left\{ 
n \in L^\infty(L,\R^2), n\in T_{L}, \dive n\in L^\infty(L),
n(p)=n_0, n(q)=n_1
\right\}
\]
where $p$, $q$ are the endpoints of $L$ and $n_0$, $n_1$ are the 
values at the vertices $p$, $q$ assigned to every Cahn--Hoffmann vector field
(see Remark \ref{r:nvertex}). 
\end{Definition}

\begin{rk}\label{r:calsets}
If the minimum $n_{L}$ in $D_L$ of the functional $\mathcal{N}_{L}$ 
satisfies the strict constraint $n_{L}(\xi)\in \inte T_{L}$ for every 
$\xi\in L$, then the velocity $f-\dive n_{L}$ is constant along the edge, 
that is the flat arc remains flat under the evolution. This is always the case
when $f=0$, since the unique minimum is the
interpolation of the assigned values at the vertices of $L$, and  the constant 
value 
of the mean $\varphi$--curvature is given by
\begin{equation}\label{f:flatc}
\kappa_\varphi^{L}=\chi_{L}\frac{2}{\ell} \ \text{on}\ L,
\end{equation} 
where $\ell$ is the length of the edge $L$ and $\chi_{L}$ is a 
convexity factor:
$\chi_{L}=1,-1,0$, depending on whether $E(t)$ is locally convex at $L$, 
locally 
concave at $L$, or neither.
\end{rk}

We refer to \cite{CN,GGR,BGNo} for some existence and uniqueness 
results for variational forced crystalline curvature flow,  when the forcing 
term $f$ is a Lipschitz function. To the best of our knowledge, there 
is no general results for discontinuous forcing terms. 

The easiest example of variational forced crystalline curvature flow is the one
starting from a coordinate rectangle $R$ (rectangle with 
edges parallel to the coordinate axes), and with constant forcing term 
$f(\xi)=\gamma\in \R$. 

Ordering the vertices of $\partial E$ clockwise starting from the left--upper 
corner, $P_i$, $i=1,\dots,4$, we have 
\[
n_\varphi(P_1)=(-1,1), \quad n_\varphi(P_2)=(1,1), \quad n_\varphi(P_3)=(1,-1), 
\quad
n_\varphi(P_4)=(-1,-1),
\]
and hence the variational Cahn--Hoffmann vector field on the sides is
\[
n_\varphi(\xi)=n_\varphi(P_i)+ \frac{2(-1)^{i+1}}{\ell_i}(\xi-P_i), \qquad 
\xi\in 
L_i,\ 
i=1, \dots, 4,
\]
where $L_i$ is the edge of the rectangle starting from $P_i$, and $\ell_i$ is 
its 
length.

The $\varphi$--curvature  associated to this field is constant on each 
edge $L_i$, and  
\[
\kappa_\varphi^i=\frac{2(-1)^{i+1}}{\ell_i} \ \text{on}\ L_i, \ i=1, \dots, 4.
\]
As a consequence the evolution of $R$ is given by rectangles 
$R(t)$, and the 
description of the flow reduces to the analysis of a system of ODE's
solved by the 
length $\ell_1(t)$ and $\ell_2(t)$ of the horizontal and the vertical edges of 
$R(t)$: 
\begin{equation}\label{f:forzc}
\begin{cases}
\ell_1'=-\dfrac{4}{l_2}-2\gamma \,, \\[8pt]
\ell_2'=-\dfrac{4}{l_1}-2\gamma \,.
\end{cases}
\end{equation}
(see \cite{AT} and \cite{GG}).
Hence, if $\gamma>0$, $\ell_1$ and $\ell_2$ are decreasing and we have 
finite--time extinction (see the phase portrait in the left--hand side 
of 
Figure \ref{fig:pp}), while if $\gamma<0$ the evolution is the one depicted 
in the right--hand side of Figure \ref{fig:pp}. The 
square of side length $\ell^0=-2/\gamma$ is the unique equilibrium of the 
system. 
Moreover, the function
\[
U(\ell_1,\ell_2)=4(\log(\ell_2)-\log(\ell_1))+2\gamma(\ell_2-\ell_1) 
\]
is a constant of motion for the system \eqref{f:forzc}.
The squares starting with a side length 
shorter than $l^0$ shrink to a point, while the squares starting with a side 
length longer than $l^0$ expand with asymptotic velocity $-\gamma$ on each side 
as the side length diverges. The rectangles can shrink to a point, 
converge to the equilibrium or expand, depending on the starting 
length of the edges.

\begin{figure}[h!]
\includegraphics[height=4cm]{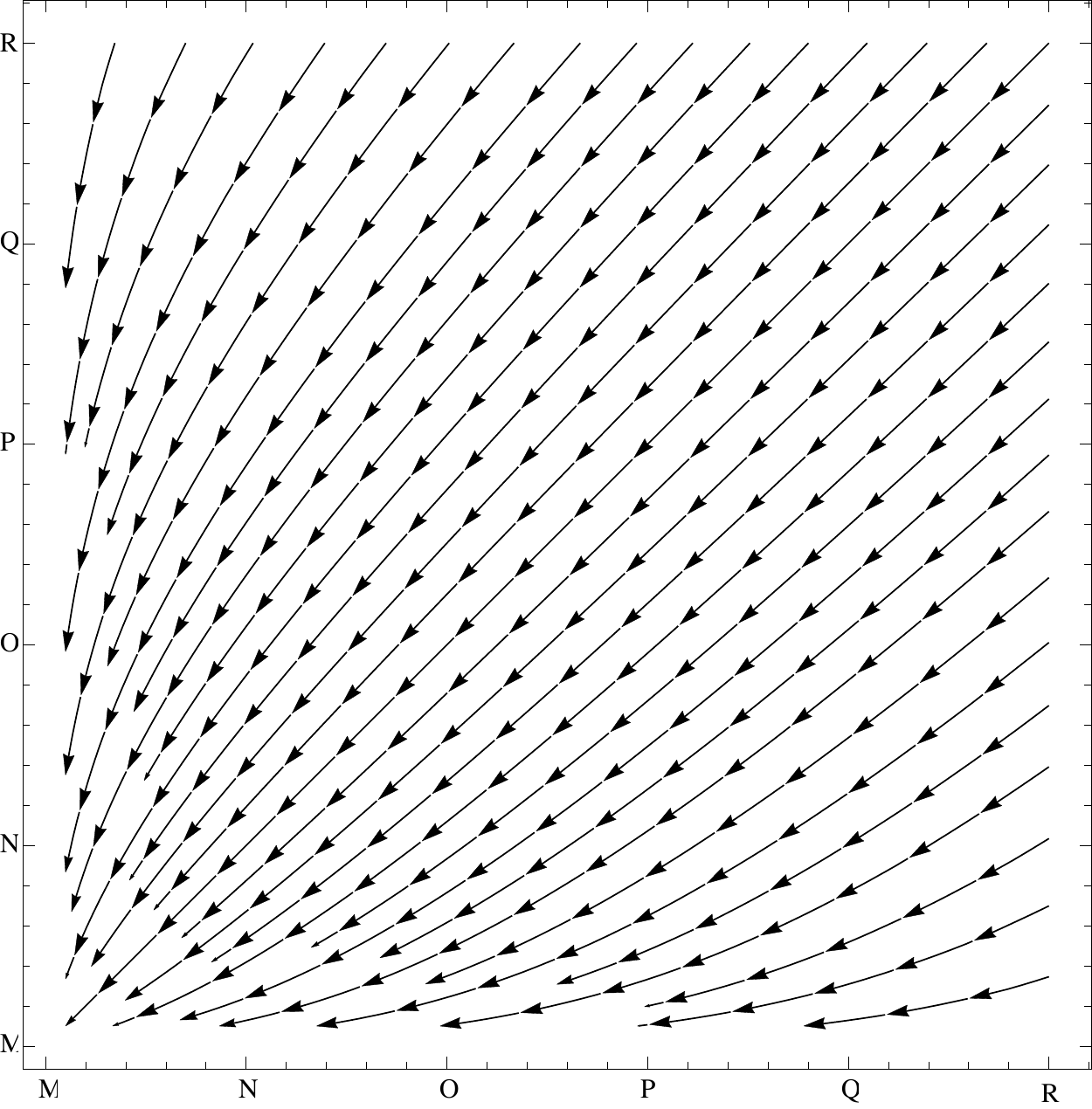}
\qquad \qquad
\includegraphics[height=4cm]{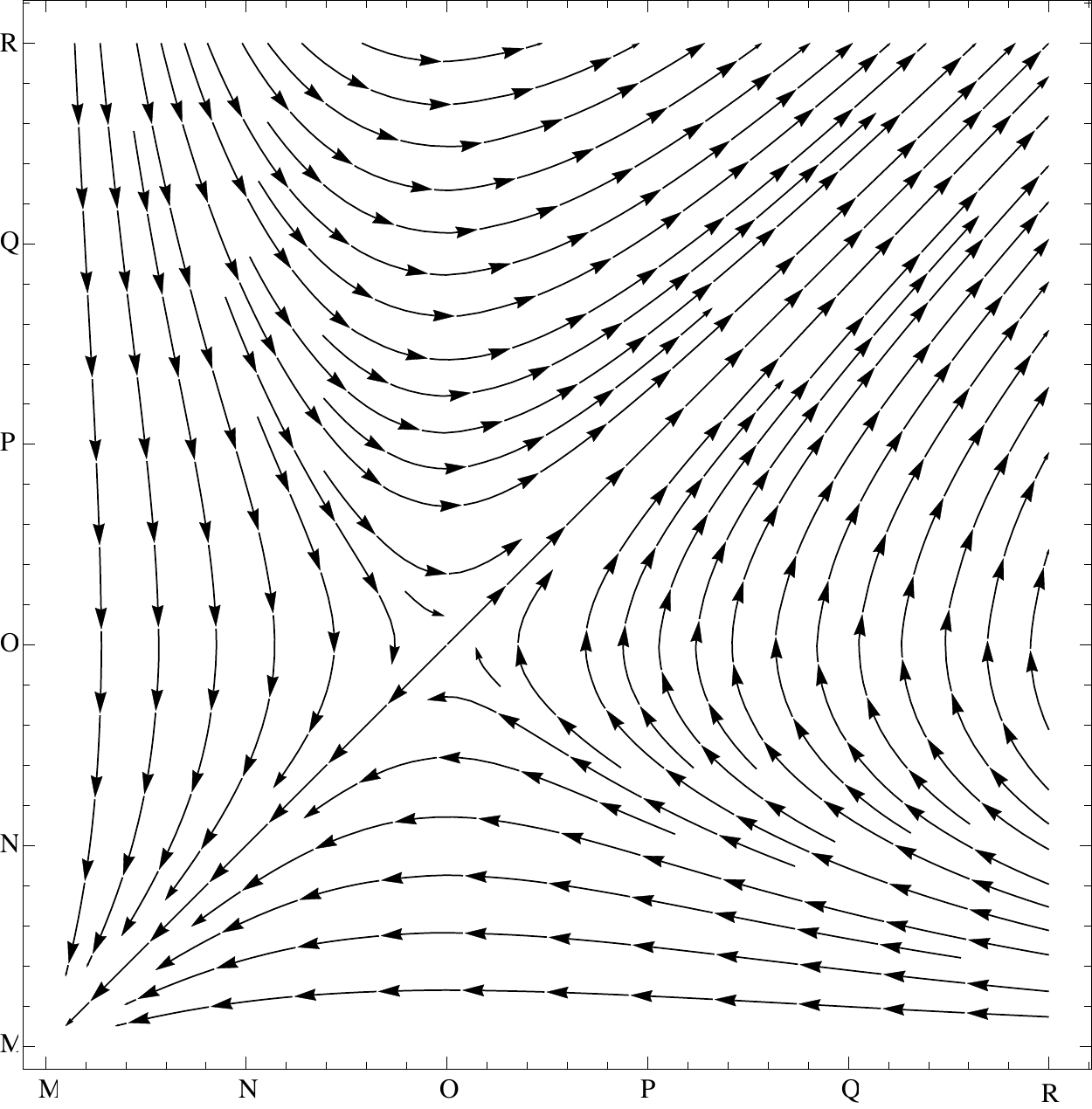}
\caption{Phase portraits of \eqref{f:forzc} for $\gamma>0$ and 
for $\gamma<0$. }\label{fig:pp}
\end{figure} 

In general, in presence of a spatially inhomogeneous forcing term,  the 
selfsimilarity of the evolution of 
coordinate rectangles may fail, due to the possibility of edge 
breaking/bending phenomena (see \cite{GR1}, \cite{GR2}).

\subsection{The oscillating forcing term}

In what follows we will consider a prototypical case of oscillating layered  
forcing term: given two constants $\alpha< 0 <\beta$, let
\begin{equation}\label{f:defg}
g(x)=
\begin{cases}
\alpha, & \text{if}\ \text{dist}(x,\Z)\leq \dfrac{1}{4},\\
\beta, & \text{otherwise},
\end{cases}
\end{equation}
and let us denote by $\mathcal{I}$ the set of discontinuity lines 
of the function $g$:
\[
\mathcal{I}:=\left\{(x,y),\ 
x\in\dfrac{1}{4}+\dfrac{1}{2}\Z, \ y\in\R\right\}.
\]
In order to distinguish the two families of discontinuity lines for 
$g$, depending on the position of the 
phases $\alpha$ and $\beta$ with respect to the interface, we will use the 
notation 
\[
\begin{split}
\mathcal{I}_{\alpha,\beta} & :=\left\{(x,y),\ y\in\R,\ x\in\R\  \text{such 
that}\  
g(s,y)=\beta\ \forall s\in (x, x+1/2)\right\} \\ 
\mathcal{I}_{\beta,\alpha} & :=\left\{(x,y),\ y\in\R,\ x\in\R\  \text{such 
that}\  
g(s,y)=\alpha\ \forall s\in (x, x+1/2)\right\}.
\end{split}
\]

Given $\varepsilon >0$ we consider the function $g_\varepsilon$ defined by
\[
g_\varepsilon(x,y):=g\left(\frac{x}{\varepsilon}\right). 
\]
(with an abuse of notation, we will often write $g_\varepsilon(x)$ 
instead of $g_\varepsilon(x,y)$).
Setting 
\begin{equation}\label{f:xenne}
x_N:=\left(N+\frac{1}{4}\right)\varepsilon,\qquad N\in\N,
\end{equation} 

\begin{figure}[h!]
\includegraphics[height=2.5cm]{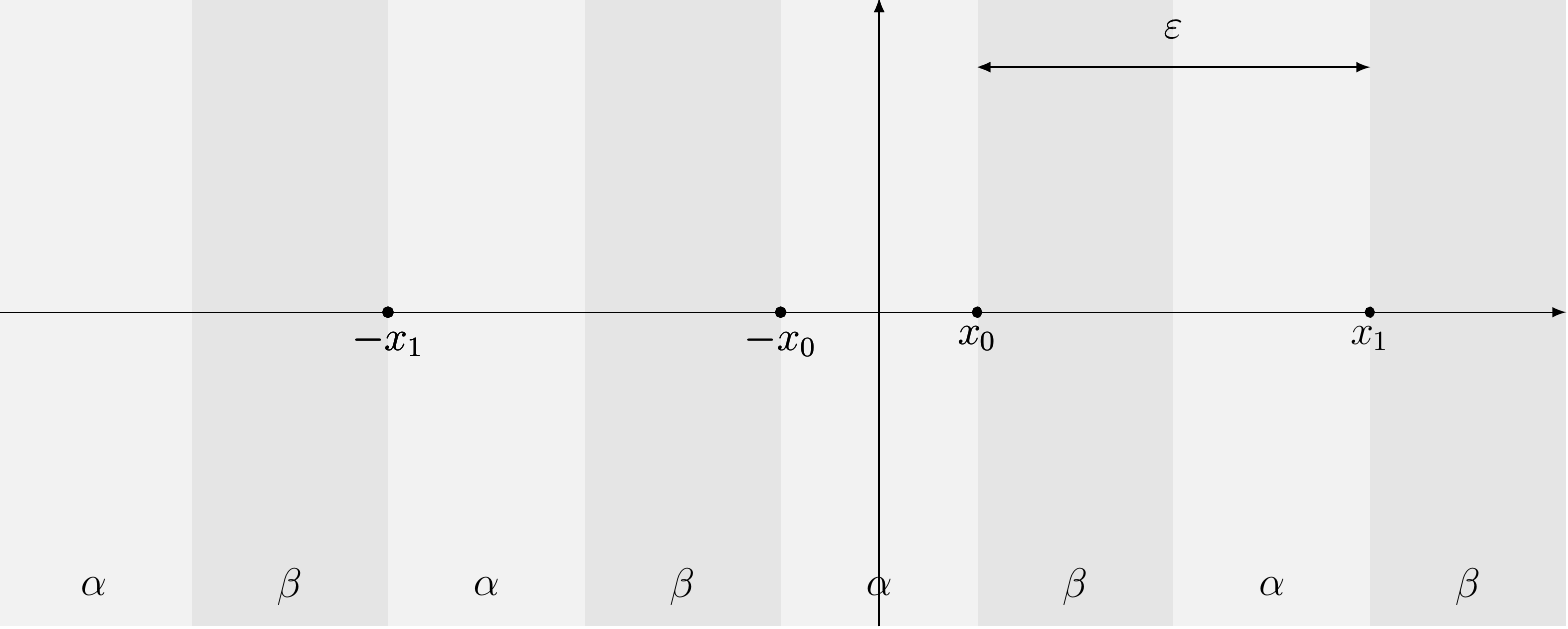}
\caption{The oscillating forcing term $g_\varepsilon$}\label{fig:fig7}
\end{figure}
we have that
\begin{equation}
g_\varepsilon(x,y)=
\begin{cases}
\alpha, & x\in \left(x_N-\dfrac{\varepsilon}{2}, x_N\right),\ y\in\R, \\[10pt]
\beta, & x\in \left(x_N, x_N+\dfrac{\varepsilon}{2}\right), \ y\in\R,
\end{cases}
\end{equation}
that is $\{x=x_N\}\subseteq \varepsilon\mathcal{I}_{\alpha,\beta}$ and 
$\{x=-x_N\}\subseteq \varepsilon\mathcal{I}_{\beta, \alpha}$ for every $N\in\N$ 
(see 
Figure \ref{fig:fig7}).

Finally, we define the multifunction
\[
G_\varepsilon (x,y)=
\begin{cases}
g_\varepsilon(x,y), & \text{if}\ (x,y)\not\in \varepsilon\mathcal{I} , \\
[\alpha, \beta], &  \text{if}\ (x,y)\in \varepsilon\mathcal{I},
\end{cases}
\] 
in such a way a variational crystalline mean curvature flow 
$E(t)$ with forcing term 
$g_\varepsilon$ has to have normal velocity
$v(t)\in \dive n(t) + G_\varepsilon$ on $\partial E(t)$.

\section{Calibrable edges}\label{s:calibrability}

In this section we keep $\varepsilon>0$ fixed, and we focus our 
attention on 
the effect of the forcing term $g_\varepsilon$ along a horizontal or 
vertical edge $L$. 



\begin{Definition}
An edge $L$ of a $\varphi$--regular set $E$ is {\em  calibrable} if there 
exists a 
Cahn--Hoffmann vector field $n$ for $\partial E$, and a constant $v\in \R$ such 
that
$v \in \dive n +G_\varepsilon$ on $L$. In this case we say that $v$ is 
a  {\em 
(normal) velocity 
of the 
edge $L$}.
\end{Definition}

\begin{rk}
In view of Remark \ref{r:nvertex}, in order to show that an edge $L$ of 
$\partial E$ is calibrable it is enough to show that there exists a vector 
field $n$ on $L$, such that $n(\xi)\in T_L$ for every $\xi\in \inte L$ and 
agrees with assigned values (prescribed by the 
geometry of $\partial E$) at the endpoints of $L$, .
\end{rk}

In what follows, the length of an edge $L$ will be denoted by $\ell$.

\subsection{Vertical edges}\label{rk:vert}
Every vertical edge is calibrable. 
Namely, if $x=\overline{x}\in\R$ is the straight line containing $L$, there 
exists a constant selection 
$\gamma_\varepsilon(\overline{x})$ of $G_\varepsilon$ on $L$, and hence 
the Cahn-Hoffman field given by the linear interpolation of the extreme 
values
satisfies all the 
requirements. The related velocity of the edge is given by
\begin{equation}\label{f:vveloc}
v= 
\frac{2}{\ell}\chi_{L}+\gamma_\varepsilon(\overline{x}),
\end{equation}
where $\chi_{L}$ is the convexity factor defined in Remark 
\ref{r:calsets}.
Hence the velocity of the edge is uniquely determined if $L$ is not a subset of the jump 
set $\varepsilon\mathcal{I}$  of 
$g_\varepsilon$, while, for 
$\overline{x}$ belonging to an interface we 
can freely choose any fixed 
value $v$ such that
\[
v\in 
\left[\frac{2}{\ell}\chi_{L}+\alpha, 
\frac{2}{\ell}\chi_{L}+\beta\right].
\]
In particular, a vertical edge $L\subseteq\varepsilon\mathcal{I}$ with 
zero $\varphi$--curvature is allowed to have 
velocity zero, 
since 
we can choose $\gamma_\varepsilon=0$ on $L$. Similarly, a vertical edge 
$L\subseteq\varepsilon\mathcal{I}$ either with 
positive $\varphi$--curvature 
and length $\ell\geq 
-2/\alpha$, or  with negative $\varphi$--curvature and length $\ell\geq 
2/\beta$ is allowed to have velocity zero, since 
we can choose $\gamma_\varepsilon=-2/\ell$ or $\gamma_\varepsilon=2/\ell$ on 
$L$, 
respectively.

\subsection{Horizontal edges}

Let $L$ be a horizontal edge.
A Cahn--Hoffman vector field $n_\varphi$ on $L$ belongs to
$T_{\varphi^\circ}(\nabla \dist^E)=T_{\varphi^\circ}(\pm e_2)$, 
so that its second component is fixed (see \eqref{e:dualcris}).
In what follows we consider
only its first component, by using the abuse of notation 
$n_\varphi=(n(x),\pm 1)$ on $L$.

Hence $L=[p,q]\times \{\overline{y}\}$ turns out to be calibrable 
if there exists a Lipschitz function 
\begin{equation}\label{f:constr}
n\colon [p,q]\to [-1,1],
\end{equation}
such that 
\begin{equation}\label{f:nofrac2}
n'+g_\varepsilon= \chi_L\frac{2}{\ell}+ \frac{1}{\ell} \int_L 
g\left(\dfrac{s}{\varepsilon}\right) \, ds \qquad \text{a.e.\ in}\ 
[p,q],
\end{equation}  
and with the following prescribed values at the endpoints
\begin{equation}\label{e:bcon}
(BC)=\begin{cases}
n(p)=n(q)=n_0\in \{\pm 1\} &  \text{if}\ \chi_L=0; \\
n(p)=-1,\ n(q)=1, &  \text{if}\ \chi_L=1; \\
n(p)=1,\ n(q)=-1, &  \text{if}\ \chi_L=-1.
\end{cases}
\end{equation}

Denoting by $\ell_{\alpha}$, $\ell_{\beta}\in [0,\varepsilon/2]$ the 
non--negative lengths given by the conditions 
\begin{equation}\label{e:pezzetti}
 \ell - 
 \varepsilon\left\lfloor\dfrac{\ell}{\varepsilon}\right\rfloor=\ell_{\alpha}+ 
 \ell_{\beta}, \qquad 
 \int_L g_\varepsilon(s)\, ds= \frac{\alpha+\beta}{2}\left(\ell-
 \ell_{\alpha}-\ell_{\beta}\right)+\alpha \ell_{\alpha}+\beta \ell_{\beta},
\end{equation}
the necessary condition \eqref{f:nofrac2} prescribes the value of $n'$ outside 
the jump set of $g_\varepsilon$:
\begin{equation}\label{f:conden3}
n'(x) = 
 \begin{cases}
\dfrac{1}{2\ell}\left(4 
\chi_L+(\beta-\alpha)(\ell-\ell_{\alpha}+\ell_{\beta})\right)
&  \text{if}\ 
 g_\varepsilon(x)=\alpha , \\[10pt]
 \dfrac{1}{2\ell}\left(4 
 \chi_L-(\beta-\alpha)(\ell+\ell_{\alpha}-\ell_{\beta})\right),
  & \text{if}\ 
 g_\varepsilon(x)=\beta.
 \end{cases}
\end{equation}
and  the velocity of the edge $L$:
\begin{equation} \label{f:velocita}
v_L=\chi_L\frac{2}{\ell}+\frac{\alpha+\beta}{2}+
\frac{\beta-\alpha}{2\ell}(\ell_\beta -\ell_\alpha).
\end{equation} 

In conclusion, the calibrability conditions \eqref{f:nofrac2} and 
\eqref{e:bcon} determine univocally a
candidate field $n$ (and the related velocity of the edge), which is continuous 
and affine
with given slope in each phase of $g_\varepsilon$. This field $n$
is the Cahn--Hoffman field which calibrates $L$ with velocity 
\eqref{f:velocita} if and only if it satisfies the constraint 
$|n(x)| \leq 1$ for every $x\in [p,q]$.

\begin{rk}\label{r:incrper}
If $\ell> \varepsilon$, by \eqref{f:conden3} the variation of $n$ in a 
period 
$\varepsilon$ is
\begin{equation}\label{e:inaper}
\Delta_\varepsilon n= n(x+\varepsilon)-n(x)= \frac{\varepsilon}{2 \ell}
\left(4 \chi_L+(\beta-\alpha)(\ell_\beta-\ell_\alpha)\right).
\end{equation}
In what follows we will assume 
\begin{equation}
\label{f:asseps}
0<\varepsilon < \frac{8}{\beta-\alpha}.
\end{equation} 
In particular, \eqref{f:asseps} implies that  $\Delta_\varepsilon n$ has the 
same sign of $\chi_L$, and hence  the constraint $|n|\leq 1$ is fulfilled in 
$[p,q]$ if it is satisfied in a suitable neighborhood of the extreme points 
$p$ and $q$.
\end{rk}

The calibrability of a horizontal edge $L$ will depend   
on its length and on the position of its endpoints. We start by characterizing 
the calibrable horizontal edges with zero $\varphi$--curvature.
 
\begin{prop}[Horizontal edges with zero 
$\varphi$--curvature]\label{r:hcurvz}
Let $L=[p,q]\times \{\overline{y}\}$ be a horizontal edge  with zero 
$\varphi$--curvature, let $\ell_{\alpha}$, $\ell_{\beta}$ be the lengths 
defined in \eqref{e:pezzetti}, and  let 
$n_0\in \{\pm 1\}$ be the given value of the 
Cahn--Hoffmann vector field at the endpoints of $L$.
Then the following 
hold.
\begin{itemize}
\item[(i)] If $\ell=\ell_\alpha+\ell_\beta < \varepsilon$, 
$L$ is calibrable with velocity 
$
v_L=\dfrac{\alpha \ell_{\alpha}+\beta 
\ell_{\beta}}{\ell_\alpha+\ell_\beta}
$
if and only if
\begin{itemize}
\item[(ia)] $n_0=1$, and either $g_\varepsilon(p)=\beta$, 
$g_\varepsilon(q)=\alpha$, or with an endpoint on 
$\varepsilon\mathcal{I}_{\alpha, 
\beta}$  ;
\item[(ib)] $n_0=-1$, and either $g_\varepsilon(p)=\alpha$, 
$g_\varepsilon(q)=\beta$, or with an endpoint on 
$\varepsilon\mathcal{I}_{\beta,\alpha}$ .
\end{itemize}
\item[(ii)] If $\ell\geq  \varepsilon$, $L$ is calibrable 
with velocity 
$
v_L=\dfrac{\alpha+\beta}{2}
$
if and only if
\begin{itemize}
\item[(iia)] $n_0=1$, and  $(p,\overline{y})$, $(q,\overline{y}) \in 
 \varepsilon\mathcal{I}_{\alpha, \beta}$;
\item[(iib)] $n_0=-1$, and $(p,\overline{y})$, $(q,\overline{y}) \in 
  \varepsilon\mathcal{I}_{\beta,\alpha}$.
\end{itemize}
\end{itemize}

\end{prop}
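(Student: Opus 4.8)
The plan is to deduce everything from the candidate Cahn--Hoffmann field already constructed before the statement. For $\chi_L=0$ the calibrability equations \eqref{f:nofrac2}--\eqref{e:bcon} single out a \emph{unique} continuous piecewise affine $n\colon[p,q]\to\R$ with $n(p)=n(q)=n_0$, of slope $n'_\alpha:=\frac{\beta-\alpha}{2\ell}(\ell-\ell_\alpha+\ell_\beta)\ge 0$ on the $\alpha$--phases of $g_\varepsilon$ and of slope $n'_\beta:=-\frac{\beta-\alpha}{2\ell}(\ell+\ell_\alpha-\ell_\beta)\le 0$ on the $\beta$--phases, both slopes being strict unless $L$ lies inside a single phase; and $L$ is calibrable (with velocity \eqref{f:velocita}) if and only if $|n|\le 1$ on $[p,q]$. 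A direct computation shows that \eqref{f:velocita} with $\chi_L=0$ equals $\frac{\alpha\ell_\alpha+\beta\ell_\beta}{\ell_\alpha+\ell_\beta}$ when $\ell<\varepsilon$ (in which case $\ell_\alpha+\ell_\beta=\ell$) and equals $\frac{\alpha+\beta}{2}$ exactly when $\ell_\alpha=\ell_\beta$. Thus the whole proof reduces to deciding when this explicit $n$ stays in $[-1,1]$.

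\emph{Part (i), $\ell<\varepsilon$.} Now $[p,q]$ meets at most three consecutive phases (an initial partial phase, at most one full phase, a final partial phase) and, since $n$ increases on $\alpha$--phases, decreases on $\beta$--phases and returns to $n_0$ at $q$, its extrema are attained at $p$, at $q$, or at an interior interface. Take $n_0=1$. Then $n\le 1$ to the right of $p$ forces the phase adjacent to $p$ on the right to be $\beta$, i.e.\ $g_\varepsilon(p)=\beta$ or $p\in\varepsilon\mathcal I_{\alpha,\beta}$; and $n\le 1$ to the left of $q$ forces the phase adjacent to $q$ on the left to be $\alpha$, i.e.\ $g_\varepsilon(q)=\alpha$ or $q\in\varepsilon\mathcal I_{\alpha,\beta}$; together these rule out the three--phase pattern and leave precisely the alternatives of (ia). Conversely, in each of these $[p,q]$ carries one $\beta$--piece followed by one $\alpha$--piece (either possibly empty), $n$ first drops from $1$ by $\frac{(\beta-\alpha)\ell_\alpha\ell_\beta}{\ell}$ and then climbs back; by the arithmetic--geometric inequality $\ell_\alpha\ell_\beta\le\ell^2/4$, so by \eqref{f:asseps} this drop is $<2$ and hence $-1<n\le 1$: $L$ is calibrable. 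The case $n_0=-1$ is obtained by exchanging the roles of $\alpha$ and $\beta$ and of $\varepsilon\mathcal I_{\alpha,\beta}$ and $\varepsilon\mathcal I_{\beta,\alpha}$, yielding (ib).

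\emph{Part (ii), $\ell\ge\varepsilon$.} Calibrability with velocity $\frac{\alpha+\beta}{2}$ forces $\ell_\alpha=\ell_\beta$, hence $n'_\alpha=-n'_\beta=\frac{\beta-\alpha}{2}$; by Remark \ref{r:incrper} the net variation of $n$ over each period vanishes, so $n$ is a sawtooth of slope $\pm\frac{\beta-\alpha}{2}$. Take $n_0=1$. As before $n\le 1$ to the right of $p$ forces a $\beta$--phase there; but the following (full) $\alpha$--phase raises $n$ by $\frac{(\beta-\alpha)\varepsilon}{4}$, so if that $\beta$--phase were only partial the sawtooth would overshoot $1$, whence $p$ sits at the beginning of a full $\beta$--phase, i.e.\ $p\in\varepsilon\mathcal I_{\alpha,\beta}$. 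Then $n$ oscillates between $1$, attained exactly on $\varepsilon\mathcal I_{\alpha,\beta}$, and $1-\frac{(\beta-\alpha)\varepsilon}{4}>-1$ (by \eqref{f:asseps}); so $|n|\le 1$ and the remaining condition $n(q)=1$ forces $q\in\varepsilon\mathcal I_{\alpha,\beta}$, which is (iia) and makes $\ell$ a multiple of $\varepsilon$. Conversely, if $p,q\in\varepsilon\mathcal I_{\alpha,\beta}$ and $n_0=1$ then $\ell$ is a multiple of $\varepsilon$, $\ell_\alpha=\ell_\beta=0$, $v_L=\frac{\alpha+\beta}{2}$, and the sawtooth above satisfies $|n|\le 1$. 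The case $n_0=-1$, with $\varepsilon\mathcal I_{\beta,\alpha}$ in place of $\varepsilon\mathcal I_{\alpha,\beta}$, gives (iib).

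\emph{Main obstacle.} The delicate point is the quantitative use of the standing assumption \eqref{f:asseps}: on the one hand it guarantees that a \emph{partial} boundary phase would push $n$ out of $[-1,1]$ (forcing, in part (ii), the endpoints onto the interfaces), and on the other hand it keeps the sawtooth strictly above $-1$. The remaining work --- enumerating the admissible phase patterns in part (i), and dealing separately with the degenerate configuration in which $[p,q]$ lies inside a single phase (where $n\equiv n_0$) --- is routine.
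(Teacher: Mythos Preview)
Your argument follows the same route as the paper's: isolate the unique candidate field $n$ determined by \eqref{f:nofrac2}--\eqref{e:bcon} and decide when the constraint $|n|\le 1$ is met. The estimates you use (the AM--GM bound $\ell_\alpha\ell_\beta\le\ell^2/4$ in part~(i), the sawtooth--overshoot argument in part~(ii)) are slight variants of the paper's computations and are equally valid.

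There is one small logical slip at the opening of part~(ii). You begin with ``Calibrability with velocity $\tfrac{\alpha+\beta}{2}$ forces $\ell_\alpha=\ell_\beta$'', reading the velocity clause as part of the hypothesis. But the proposition is characterising \emph{calibrability} itself; the phrase ``with velocity $v_L=\tfrac{\alpha+\beta}{2}$'' is parenthetical, recording the value $v_L$ takes in the calibrable case (this is how the result is used later, e.g.\ in showing that non--$\mathcal C$ edges break). So you must first show that calibrability alone forces $\ell_\alpha=\ell_\beta$. This is a one--line addition via Remark~\ref{r:incrper}: with $\chi_L=0$ the periodic increment is $\Delta_\varepsilon n=\tfrac{\varepsilon(\beta-\alpha)}{2\ell}(\ell_\beta-\ell_\alpha)$, and since $n(p)=n(q)=1$ and $\ell\ge\varepsilon$, a nonzero increment makes either $n(p+\varepsilon)>1$ or $n(q-\varepsilon)>1$. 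Once $\ell_\alpha=\ell_\beta$ is established this way, your sawtooth analysis goes through unchanged and the proof is complete.
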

 
 \begin{proof}
We prove the case $n_0=1$, the other one being similar.
By \eqref{f:conden3}, the unique candidate field $n$ is strictly monotone 
increasing in every $\alpha$ phase, and 
strictly monotone decreasing in every $\beta$ phase, and, in order 
to satisfy the constraint $|n|\leq 1$ on $L$, the edge needs 
to be the union of three consecutive segments
$L= L_\beta \cup L_c \cup  L_\alpha$, with 
$L_\beta=[p,p+\ell_\beta]\times \{\overline{y}\}$, 
$L_c=[p+\ell_\beta, q-\ell_\alpha]\times \{\overline{y}\}$,
$L_\alpha=[q-\ell_\alpha,q]\times \{\overline{y}\}$, with
$p+\ell_\beta$, $q-\ell_\alpha \in \varepsilon\mathcal{I}_{ 
\beta,\alpha}$.
If $L_c=\emptyset$, then the constraint $|n|\leq 1$ is satisfied on $L$
if and only if 
\[
-\ell_\beta 
\frac{\beta-\alpha}{2(\ell_\alpha+\ell_\beta)}(\ell+\ell_\alpha-\ell_\beta)=
-\ell_\beta\ell_\alpha\frac{\beta-\alpha}{(\ell_\alpha+\ell_\beta)} \geq -2.
\] 
Under the assumption \eqref{f:asseps}
this is always the case, since
\[
(\beta-\alpha) \ell_\alpha \ell_\beta \leq \frac{8}{\varepsilon} \ell_\alpha 
\ell_\beta \leq 4 \min\{\ell_\alpha, \ell_\beta\} \leq 2(\ell_\alpha + 
\ell_\beta),
\]
proving (i).
 
On the other hand, if $L_c\neq \emptyset$, 
by \eqref{e:inaper} we have 
\[
n(p+\varepsilon)-n(p)=\frac{\varepsilon}{2\ell}(\beta-\alpha)
(\ell_\beta-\ell_\alpha)=n(q)-n(q-\varepsilon),
\]
and hence,
since $n(p)=n(q)=1$, the constraint $|n|\leq 1$ is not satisfied if
$\ell_\alpha \neq \ell_\beta$. Finally,
if  $\ell_\alpha=\ell_\beta$, then
\[
  n'(x)= 
  \begin{cases}
  \dfrac{\beta-\alpha}{2}
  &  \text{if}\ 
  g_\varepsilon(x)=\alpha , \\[10pt]
  \dfrac{\alpha-\beta}{2}
   & \text{if}\ 
  g_\varepsilon(x)=\beta,
  \end{cases}
\]
and a Canh--Hoffmann vector field with this derivative exists
only if  $\ell_\alpha=\ell_\beta=\varepsilon/2$,
otherwhise
\[
 n(p+\ell_\beta+\varepsilon/2)= 1+
 \frac{\beta-\alpha}{2}\left(\frac{\varepsilon}{2}-\ell_\beta\right)>1.
\]
In conclusion, $L$ is calibrable with velocity $v_L=(\alpha+\beta)/2$
if and only if $(p,\overline{y})$, $(q,\overline{y}) \in 
 \varepsilon\mathcal{I}_{\alpha, \beta}$.
 \end{proof}

\begin{rk}\label{e:latino}
If $L=[x_N, x_N+\delta] \times \{\overline{y}\}$, with $\overline{y}\in\R$, 
$\delta\in (0, \varepsilon)$, and  $x_N$ defined in 
\eqref{f:xenne},
is a horizontal edge with zero $\varphi$--curvature (see Figure 
\ref{fig:fig9}, right), then  $L$ is 
calibrable by Proposition \ref{r:hcurvz}(i). 
More precisely, if $\delta \leq \varepsilon/2$, 
then $g_\varepsilon=\beta$ on $L$, and we can take 
$n$ constant on $L$, so that $L$ has constant velocity $v_L=\beta$. 
On the 
other hand, if $\delta>\varepsilon/2$, then the field
\begin{equation}\label{f:chlatino}
n(x)=n(x_N) 
+\frac{\frac{\varepsilon}{2}\beta+\left(\delta-\frac{\varepsilon}{2}\right) 
\alpha}{\delta} (x-x_N)-\int_{x_N}^x \geps\left(\frac{s}{\varepsilon}\right)\, 
ds
\end{equation}
calibrates the edge $L$ with velocity
\begin{equation}\label{f:vldelta}
v_{L}= 
\frac{\frac{\varepsilon}{2}\beta+\left(\delta-\frac{\varepsilon}{2}\right)
 \alpha}{\delta}, \qquad \delta \in (\varepsilon/2, \varepsilon).
\end{equation}
\end{rk}
    
Concerning the edges with non zero $\varphi$--curvature, the following result
shows that the edge is always calibrable when the curvature term is 
dominant.

\begin{prop}\label{p:fracon}
Every horizontal edge $L$ such that
\begin{itemize}
\item[($C_+$):] $\chi_L=1$, and $\ell+\ell_\alpha-\ell_\beta \leq 
4/(\beta-\alpha)$;
\item[($C_-$):] $\chi_L=-1$, and $\ell-\ell_\alpha+\ell_\beta \leq 
4/(\beta-\alpha)$;
\end{itemize}
is calibrable with velocity $v_L$ given by \eqref{f:velocita}.
\end{prop}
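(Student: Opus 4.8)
The plan is to verify directly that the candidate field $n$ constructed from the calibrability conditions \eqref{f:nofrac2} and \eqref{e:bcon} satisfies the constraint $|n(x)|\le 1$ on $[p,q]$ under hypothesis $(C_+)$ (the case $(C_-)$ being symmetric, obtained by reflection $y\mapsto -y$ which swaps $\chi_L=1$ with $\chi_L=-1$ and interchanges the roles of the two phases). Recall that with $\chi_L=1$ the boundary conditions are $n(p)=-1$, $n(q)=1$, and that by \eqref{f:conden3} the slopes in the two phases are
\[
n'=\frac{1}{2\ell}\bigl(4+(\beta-\alpha)(\ell-\ell_\alpha+\ell_\beta)\bigr)>0\quad\text{in the }\alpha\text{-phase},\qquad
n'=\frac{1}{2\ell}\bigl(4-(\beta-\alpha)(\ell+\ell_\alpha-\ell_\beta)\bigr)\quad\text{in the }\beta\text{-phase},
\]
where the $\beta$-phase slope is nonnegative precisely when $(C_+)$ holds. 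So under $(C_+)$ the field $n$ is nondecreasing on all of $[p,q]$, whence $|n(x)|\le 1$ is automatic from the endpoint values $n(p)=-1\le n(x)\le n(q)=1$.

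First I would set up the piecewise-affine candidate $n$: start at $n(p)=-1$, integrate the slopes in \eqref{f:conden3} phase by phase across $[p,q]$, and note (as in Remark \ref{r:incrper}) that the total increment over each full period $\varepsilon$ is $\Delta_\varepsilon n=\frac{\varepsilon}{2\ell}(4+(\beta-\alpha)(\ell_\beta-\ell_\alpha))>0$, and that consistency with the endpoint value $n(q)=1$ is exactly the content of \eqref{f:nofrac2} integrated over $L$ (i.e.\ the total variation of $n$ equals $2=n(q)-n(p)$ by the definition of $v_L$). Then I would observe that both phase slopes are nonnegative: for the $\alpha$-phase this is clear since $\beta-\alpha>0$ and $\ell-\ell_\alpha+\ell_\beta=\ell-\ell_\alpha+\ell_\beta\ge \ell-\varepsilon/2>0$ whenever $\ell\ge\varepsilon$, and more simply $\ell-\ell_\alpha+\ell_\beta\ge0$ always since $\ell_\alpha\le\ell$; for the $\beta$-phase, nonnegativity is literally the inequality $\ell+\ell_\alpha-\ell_\beta\le4/(\beta-\alpha)$ assumed in $(C_+)$.

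Once monotonicity is established, the constraint follows immediately: $n$ is nondecreasing, $n(p)=-1$, $n(q)=1$, so $-1\le n(x)\le 1$ for all $x\in[p,q]$, and hence $n$ is an admissible Cahn--Hoffmann field on $L$; since it was built to satisfy \eqref{f:nofrac2}, the edge $L$ is calibrable with the velocity $v_L$ of \eqref{f:velocita}. I would also remark that one should check the small leftover pieces of length $\ell_\alpha,\ell_\beta$ at the ends are handled consistently — but since every local slope is nonnegative this requires no extra case analysis, unlike the zero-curvature situation of Proposition \ref{r:hcurvz} where the slopes had opposite signs. The main (and essentially only) obstacle is bookkeeping: correctly identifying which residual phase sits at $p$ and which at $q$ for the various positions of the endpoints relative to $\varepsilon\mathcal{I}$, and confirming that in each configuration the nonnegativity of both slopes is all that is needed; there is no genuine inequality to fight, the content of $(C_\pm)$ being precisely what makes the "bad" phase slope fail to be negative.
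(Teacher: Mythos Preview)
Your proposal is correct and follows exactly the paper's approach: the paper's proof is a two-line observation that under $(C_\pm)$ both phase slopes in \eqref{f:conden3} have the same sign, so the candidate field $n$ is monotone between its prescribed endpoint values $\mp1$, and the constraint $|n|\le1$ is automatic. Your write-up is simply a more explicit rendering of that same monotonicity argument, with the endpoint bookkeeping spelled out; nothing further is needed.
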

\begin{proof}
By \eqref{f:conden3}, in both cases the candidate field $n$ varies  
monotonically
between the two extreme values: in the case ($C_+$) it is a increasing function
 from $-1$ to $1$, while in the case ($C_-$) it is a decreasing function
from $1$ to $-1$. This ensures that the constraint \eqref{f:constr} 
is satisfied.
\end{proof}

When the forcing term dominates the curvature term, the calibrability may fail 
(see Proposition \ref{p:gencalibr} below). 
Nevertheless, the edges with endpoints
on suitable interfaces are always calibrable, as we show  
in the following result.

\begin{prop}\label{p:oninterf}
Let $L=[p,q]\times \{\overline{y}\}$ be a horizontal edge with 
$\ell\geq\varepsilon$. Then the following 
hold.
\begin{itemize}
\item[(i)] If 
$\chi_L=1$, $p\in \varepsilon\mathcal{I}_{\beta,\alpha}$, 
$q \in \varepsilon\mathcal{I}_{\alpha,\beta}$, then $L$ 
is calibrable
with velocity
\[
v_L= \frac{2}{\ell} +\frac{\alpha+\beta}{2}- 
\frac{(\beta-\alpha)\varepsilon}{4 \ell}.
\]
\item[(ii)] If 
$\chi_L=-1$, $p\in \varepsilon\mathcal{I}_{\alpha,\beta}$, 
$q \in \varepsilon\mathcal{I}_{\beta, \alpha}$, then $L$ 
is calibrable
with velocity
\[
v_L= -\frac{2}{\ell} +\frac{\alpha+\beta}{2}+ 
\frac{(\beta-\alpha)\varepsilon}{4 \ell}.
\]
\end{itemize}
\end{prop}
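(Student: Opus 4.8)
The plan is to exhibit explicitly a Cahn--Hoffmann field $n$ on $L$ satisfying the calibrability system \eqref{f:nofrac2}--\eqref{e:bcon} together with the constraint $|n|\le 1$, using that the endpoints sit on interfaces of the prescribed type. I will treat case (i) in detail ($\chi_L=1$, $p\in\varepsilon\mathcal{I}_{\beta,\alpha}$, $q\in\varepsilon\mathcal{I}_{\alpha,\beta}$), case (ii) being obtained by the reflection $n\mapsto -n$, $\chi_L\mapsto -\chi_L$, which swaps the roles of the $\alpha$ and $\beta$ phases. First I would observe that, since $p\in\varepsilon\mathcal{I}_{\beta,\alpha}$ and $q\in\varepsilon\mathcal{I}_{\alpha,\beta}$ and $\ell\ge\varepsilon$, the edge decomposes as $L=L_\alpha\cup L_c\cup L_\beta$ where $L_\alpha=[p,p+\tfrac\varepsilon2]\times\{\bar y\}$ is an initial full $\alpha$-phase, $L_\beta=[q-\tfrac\varepsilon2,q]\times\{\bar y\}$ is a final full $\beta$-phase, and $L_c$ consists of a whole number of periods; consequently $\ell_\alpha=\ell_\beta=\varepsilon/2$ in the notation of \eqref{e:pezzetti} — wait, more precisely the fractional part $\ell-\varepsilon\lfloor\ell/\varepsilon\rfloor$ is split so that $\ell_\alpha=\ell_\beta$, and the stated velocity is exactly \eqref{f:velocita} with $\ell_\beta-\ell_\alpha=0$, namely $v_L=\tfrac2\ell+\tfrac{\alpha+\beta}2-\tfrac{(\beta-\alpha)\varepsilon}{4\ell}$ after substituting the value of $\int_L g_\varepsilon$ forced by this endpoint configuration. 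I would record this as the first step: the endpoint hypotheses pin down $\int_L g_\varepsilon(s)\,ds$ and hence the candidate velocity, which matches the claim.

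The second step is to build $n$. By \eqref{f:conden3} the slopes of the candidate field in the two phases are determined; with $\chi_L=1$ one computes that $n'$ is a large positive constant on $\alpha$-intervals and a (smaller in modulus) negative constant on $\beta$-intervals, and that over one full period the net increment $\Delta_\varepsilon n$ is positive and equals $\tfrac{2\varepsilon}{\ell}$ by \eqref{e:inaper} (again using $\ell_\alpha=\ell_\beta$). Imposing $n(p)=-1$ and integrating, the field reaches its maximum over a period at the right endpoint of each $\alpha$-subinterval and its minimum at the right endpoint of each $\beta$-subinterval; because the per-period increment is positive and, by assumption \eqref{f:asseps}, the within-period oscillation amplitude $\tfrac{\beta-\alpha}{2}\cdot\tfrac\varepsilon2=\tfrac{(\beta-\alpha)\varepsilon}{4}<2$ is controlled, the values stay in $[-1,1]$; one checks the endpoint condition $n(q)=1$ holds by the choice of $v_L$, i.e. the total increment over $L$ is exactly $2$. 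This monotone-envelope argument is the technical heart of the proof, and is the same mechanism as in Remark \ref{r:incrper}: the constraint $|n|\le1$ need only be verified near $p$ and $q$, where the first $\alpha$-block raises $n$ from $-1$ and the last $\beta$-block lowers $n$ to $1$ without overshoot, precisely because these terminal blocks are aligned with the interfaces.

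The main obstacle I anticipate is the bookkeeping at the two ends: one must verify that starting from $n(p)=-1$ on a full $\alpha$-interval the field does not exceed $+1$ before the cumulative drift has had room to act, and symmetrically that near $q$, coming into a full $\beta$-interval, $n$ does not drop below $-1$ before settling at $n(q)=+1$. This reduces to the two scalar inequalities $-1+\tfrac{(\beta-\alpha)\varepsilon}{4}\le 1$ and its counterpart for the last block, both of which follow from \eqref{f:asseps}; combined with the positivity of $\Delta_\varepsilon n$ (monotone envelope across the bulk $L_c$, exactly as in Remark \ref{r:incrper}) this closes the estimate. Finally I would note that the constructed $n$ is Lipschitz, affine in each phase, takes the prescribed boundary values, and satisfies $n\in T_L$ (its modulus is $\le1$ and its second component is the fixed value $\pm1$ dictated by $\nabla d^E_\varphi$), hence it is a genuine Cahn--Hoffmann field calibrating $L$ with the asserted velocity; case (ii) follows verbatim after the reflection.
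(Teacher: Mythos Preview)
Your overall strategy coincides with the paper's: build the unique candidate field via \eqref{f:conden3}, verify the constraint $|n|\le 1$ near the two endpoints, and then invoke Remark~\ref{r:incrper} (positivity of the per--period increment $\Delta_\varepsilon n$ under \eqref{f:asseps}) to propagate the bound across the bulk. That part is fine.

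The error is in your reading of the phase structure near $q$. By definition, $q\in\varepsilon\mathcal{I}_{\alpha,\beta}$ means $g_\varepsilon=\beta$ to the \emph{right} of $q$, hence $g_\varepsilon=\alpha$ on the terminal subinterval $[q-\varepsilon/2,q]\subset L$. Thus the edge starts \emph{and} ends with an $\alpha$--phase, so $\ell$ is a half--integer multiple of $\varepsilon$ and, in the notation of \eqref{e:pezzetti}, one has $\ell_\alpha=\varepsilon/2$, $\ell_\beta=0$ (this is exactly what the paper records). Your decomposition $L=L_\alpha\cup L_c\cup L_\beta$ with a terminal $\beta$--block, and the conclusion $\ell_\alpha=\ell_\beta$, are incorrect. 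In particular, plugging $\ell_\beta-\ell_\alpha=0$ into \eqref{f:velocita} gives $v_L=\tfrac{2}{\ell}+\tfrac{\alpha+\beta}{2}$, \emph{not} the claimed $v_L=\tfrac{2}{\ell}+\tfrac{\alpha+\beta}{2}-\tfrac{(\beta-\alpha)\varepsilon}{4\ell}$; the correction term comes precisely from $\ell_\beta-\ell_\alpha=-\varepsilon/2$.

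This bookkeeping slip also breaks your endpoint check at $q$. If the last block were a $\beta$--phase and $\ell$ is large (so that the $\beta$--slope in \eqref{f:conden3} is negative), then $n$ would be decreasing into $q$ and would approach $n(q)=1$ from \emph{above}, violating $|n|\le1$. What actually happens is that the last block is an $\alpha$--phase with positive slope, so $n$ rises to $1$ from below; together with $\Delta_\varepsilon n=\frac{\varepsilon}{4\ell}\bigl(8-(\beta-\alpha)\varepsilon\bigr)>0$ (now computed with the correct $\ell_\alpha,\ell_\beta$), this is what makes the constraint hold near $q$ and hence on all of $L$. Once you fix the phase at $q$, your argument goes through and matches the paper's.
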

\begin{proof}
Assume that $\chi_L=1$, $p\in \varepsilon\mathcal{I}_{\beta,\alpha}$, 
$q \in \varepsilon\mathcal{I}_{\alpha,\beta}$, so that $n(p)=-1$, $n(q)=1$,
$\ell_\alpha=\varepsilon/2$, and $\ell_\beta=0$. Then, by \eqref{f:conden3}, we 
have that the candidate Cahn-Hoffmann field $n$ is increasing in 
$[p,p+\varepsilon/2]$, and, by \eqref{e:inaper} and \ref{f:asseps}, 
\[
n(p+\varepsilon)-n(p)=\frac{\varepsilon}{4\ell} 
\left(8-(\beta-\alpha)\varepsilon\right) >0.
\]
Similarly, we have that $n$ satisfies the constraint also in 
$[q-\varepsilon,q]$, and, again by \eqref{e:inaper}, we conclude that
$|n|\leq 1$ on $L$ so that $L$ is calibrable 
with velocity $v_L$ 
given by (i). The proof of (ii) is similar. 
\end{proof}

\begin{figure}[h!]
\includegraphics[width=4.8cm]{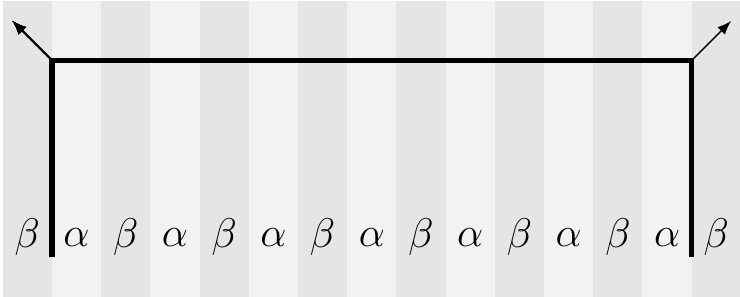}
\quad
\includegraphics[width=4.8cm]{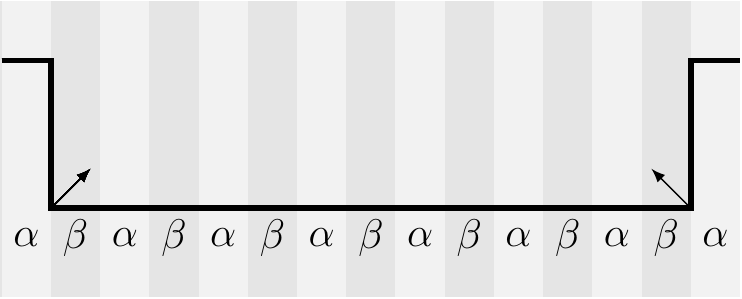}
\quad
\includegraphics[width=3.2cm]{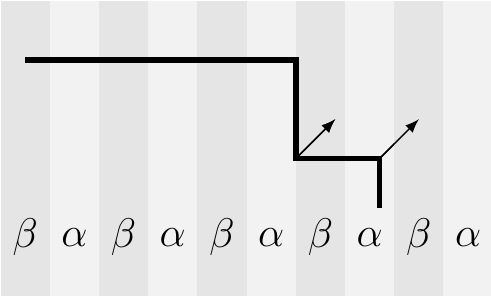}
\caption{The calibrable edges in Propositions \ref{p:oninterf}(i), 
\ref{p:oninterf}(ii),
and \ref{r:hcurvz}(ia)}\label{fig:fig9}
\end{figure}

\begin{Definition}[$\mathcal{C}$--edges]\label{d:cedge}
We say that a horizontal edge $L=[p,q]\times \{\overline{y}\}$ is a
\textit{$\mathcal{C}$--edge} if is of one of the following types. 
\begin{itemize}
\item [($\mathcal{C}^+)$] $\chi_L=1,\ (p,\overline{y})\in 
\varepsilon\mathcal{I}_{\beta,\alpha},\ (q,\overline{y}) \in 
\varepsilon\mathcal{I}_{\alpha,\beta}$.
\item [($\mathcal{C}^-$)]$\chi_L=-1,\  (p,\overline{y})\in 
\varepsilon\mathcal{I}_{\alpha,\beta},  (q,\overline{y}) \in 
\varepsilon\mathcal{I}_{\beta, 
\alpha}$.
\item [($\mathcal{C}^0$)] $\chi_L=0$, and either $n_0=1$, and
$(p,\overline{y})$, $(q,\overline{y}) \in 
 \varepsilon\mathcal{I}_{\alpha, \beta}$, or
$n_0=-1$, and $(p,\overline{y})$, $(q,\overline{y}) \in 
  \varepsilon\mathcal{I}_{\beta,\alpha}$.
\end{itemize}
By Propositions \ref{r:hcurvz} and \ref{p:oninterf}, every 
$\mathcal{C}$--edge is calibrable.
\end{Definition}

\begin{rk}[Symmetric $\mathcal{C}$--edges] \label{e:ln}
When $L$ has positive $\varphi$--curvature, and
$L=[-x_N,x_N]\times \{\overline{y}\}$, with $\overline{y}\in\R$ and $x_N$ 
defined in 
\eqref{f:xenne}, the velocity of $L$ is given by
\begin{equation}\label{f:vln}
v_L=
\frac{1}{x_N}+\frac{\alpha+\beta}{2}-
\frac{(\beta-\alpha)\varepsilon}{8 x_N}.
\end{equation}

On the other hand,  if $L$ has negative $\varphi$--curvature, and
$L=[-x_N-\varepsilon/2,x_N+\varepsilon/2]\times \{\overline{y}\}$, setting 
$\overline{x}= 
x_N+\varepsilon/2$, the velocity of $L$ is given by
\begin{equation}
v_L=
-\frac{1}{\overline{x}}+\frac{\alpha+\beta}{2}+
\frac{(\beta-\alpha)\varepsilon}{8\overline{x}}.
\end{equation}
\end{rk}

The general result concerning the long edges 
with positive $\varphi$--curvature is the following.

\begin{prop}\label{p:gencalibr}
Let $L=[p,q]\times \{\overline{y}\}$ be a horizontal edge with positive 
$\varphi$--curvature, and such that $\ell+\ell_\alpha-\ell_\beta > 
4/(\beta-\alpha)$. Then the following hold.
\begin{itemize}
\item[(i)] If either $g_\varepsilon (p)=\beta$, or $g_\varepsilon 
(q)=\beta$, or $p\in \varepsilon\mathcal{I}_{\alpha,\beta}$, or
$q\in \varepsilon\mathcal{I}_{\beta,\alpha}$, 
then $L$ is not calibrable.
\item[(ii)] If $g_\varepsilon (p)=g_\varepsilon (q) =\alpha$, let 
$\sigma_1$, $\sigma_2\in (0, \varepsilon/2)$ be such that 
$p+\varepsilon/2+\sigma_1 \in \varepsilon \mathcal{I}_{\beta,\alpha}$ and 
$q-\varepsilon/2-\sigma_2 \in \varepsilon \mathcal{I}_{\alpha,\beta}$, and 
let $\tilde{\ell}$ be the length of the interval $[p+\varepsilon/2+\sigma_1,
q-\varepsilon/2-\sigma_2]$.
Setting 
\[
m= \varepsilon 
\frac{\beta-\alpha}{(\beta-\alpha)(\tilde{\ell}+\varepsilon/2)+4},
\qquad 
h=\frac{\varepsilon}{2}\frac{(\beta-\alpha)(\tilde{\ell}+\varepsilon/2)-4}
{(\beta-\alpha)(\tilde{\ell}+\varepsilon/2)+4},
\] and  
\[
\Sigma=\left\{m\sigma_2 +h \leq  
\sigma_1 \leq 
\frac{1}{m} \sigma_2- \frac{h}{m}\right\},
\]
we have $m\in (0,1)$, $\Sigma \cap [0,\varepsilon/2]^2 \neq \emptyset$, and 
$L$ is calibrable with velocity
\[
v_L=\frac{2}{\ell}+\frac{\alpha+\beta}{2}+\frac{\beta-\alpha}{2\ell}
\left(\frac{\varepsilon}{2}-\sigma_1-\sigma_2\right)
\]
if and only if $(\sigma_1,\sigma_2)\in \Sigma$.
\item[(iii)] if $g_\varepsilon(p)=\alpha$, and $q\in \varepsilon 
\mathcal{I}_{\alpha,\beta}$ (resp. $p\in \varepsilon \mathcal{I}_{\beta, 
\alpha}$, and $g_\varepsilon(q)=\alpha$), let $\sigma\in (0,\varepsilon/2)$
be such that $p+\sigma+\varepsilon/2\in \varepsilon \mathcal{I}_{\beta,\alpha}$,
let $\ell^*$ be the length of the interval $[p+\varepsilon/2+\sigma, q]$ (
resp. of $[p, q-\varepsilon/2-\sigma]$), and let
\[
\sigma^*= \frac{\varepsilon}{2}\frac{(\beta-\alpha)(\ell^*+\varepsilon/2)-4}
{(\beta-\alpha)(\ell^*-\varepsilon/2)+4}.
\]
Then $L$ is calibrable if and only if $\sigma \geq \sigma^*$.
\end{itemize}
\end{prop}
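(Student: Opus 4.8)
The plan is to analyze the unique candidate Cahn--Hoffmann field $n$ determined by \eqref{f:conden3} with the boundary conditions \eqref{e:bcon} for $\chi_L=1$ (namely $n(p)=-1$, $n(q)=1$), and to track where the constraint $|n|\le 1$ is violated. Throughout, recall from Remark \ref{r:incrper} that under \eqref{f:asseps} the increment $\Delta_\varepsilon n$ over a full period has the sign of $\chi_L=1$, hence is positive; so the only way the constraint can fail is near the endpoints, and more precisely it fails iff $n$ rises above $1$ before reaching $q$ (equivalently, descends below $-1$ just after $p$, by the decreasing behaviour in a $\beta$-phase). The key quantitative fact is that in an $\alpha$-phase $n$ increases with slope $\tfrac{1}{2\ell}(4+(\beta-\alpha)(\ell-\ell_\alpha+\ell_\beta))>0$, and in a $\beta$-phase $n$ decreases with slope $\tfrac{1}{2\ell}(4-(\beta-\alpha)(\ell+\ell_\alpha-\ell_\beta))<0$ precisely because we are in the regime $\ell+\ell_\alpha-\ell_\beta>4/(\beta-\alpha)$. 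So on each period $n$ goes down on the $\beta$-part and up on the $\alpha$-part, with a net positive gain; the worst points for the upper constraint are the right endpoints of the $\alpha$-phases (local maxima of $n$) and for the lower constraint the right endpoints of the $\beta$-phases (local minima).

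For part (i), the plan is: if $g_\varepsilon(p)=\beta$ or $p\in\varepsilon\mathcal I_{\alpha,\beta}$, then immediately to the right of $p$ the field $n$ is in a $\beta$-phase, so it decreases starting from $n(p)=-1$, forcing $n<-1$ — contradiction. Symmetrically, if $g_\varepsilon(q)=\beta$ or $q\in\varepsilon\mathcal I_{\beta,\alpha}$, then just to the left of $q$ we are in a $\beta$-phase, so $n$ was decreasing into $q$, i.e. $n>1$ just before $q$ — contradiction. Hence $L$ is not calibrable. For part (iii) (say $g_\varepsilon(p)=\alpha$, $q\in\varepsilon\mathcal I_{\alpha,\beta}$): decompose $[p,q]=[p,p+\varepsilon/2+\sigma]\cup[p+\varepsilon/2+\sigma,q]$, where on the first interval $n$ first decreases on the initial $\beta$-stub of length $\varepsilon/2-\sigma$... one must be careful here about whether $g_\varepsilon(p)=\alpha$ means $p$ lies inside an $\alpha$-band; then $n$ increases on $[p,p+(\text{distance to next }\beta)]$ and so on. The constraint near $p$ reading $n(p)=-1$ and $n$ starting upward is automatically fine there; the binding constraint is at the sequence of local maxima marching toward $q$, and since $n(q)=1$ is the global value reached at the end and $\Delta_\varepsilon n>0$, the first local maximum after the initial partial period is the largest relative to its "budget". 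Solving $n(p+\varepsilon/2+\sigma)\le 1$ (the maximum at the end of the first $\alpha$-phase that contains the leftover length) after propagating the boundary value gives exactly the threshold condition $\sigma\ge\sigma^*$; one computes $n$ at that point explicitly using the two slopes and the lengths $\varepsilon/2-\sigma$ (initial $\beta$), $\sigma$ (following $\alpha$-stub to the interface)\,—\,this is the routine calculation yielding the stated $\sigma^*$. The resp.\ case is the mirror image reflected through the centre of $L$.

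For part (ii), where $g_\varepsilon(p)=g_\varepsilon(q)=\alpha$, the plan is to write $[p,q]=[p,p+\varepsilon/2+\sigma_1]\cup[p+\varepsilon/2+\sigma_1,q-\varepsilon/2-\sigma_2]\cup[q-\varepsilon/2-\sigma_2,q]$, with the middle interval $[\,p+\varepsilon/2+\sigma_1,\ q-\varepsilon/2-\sigma_2\,]$ of length $\tilde\ell$ having both endpoints on $\varepsilon\mathcal I_{\beta,\alpha}$ and $\varepsilon\mathcal I_{\alpha,\beta}$ respectively, so it is an integer number of periods and hence "self-similar". The binding constraints are: near $p$, that $n$ (which dips on the initial $\beta$-stub of length $\varepsilon/2-\sigma_1$ then climbs) does not go below $-1$, which after computing $n$ at the bottom of that first dip gives one linear inequality in $(\sigma_1,\sigma_2)$; and near $q$, symmetrically, that $n$ does not exceed $1$ at the top of the last $\alpha$-phase before $q$, giving the other linear inequality. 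Propagating the slopes through the middle periodic block (using $\Delta_\varepsilon n$) turns these two local conditions into the two inequalities $m\sigma_2+h\le\sigma_1$ and $\sigma_1\le \tfrac1m\sigma_2-\tfrac hm$, i.e.\ $(\sigma_1,\sigma_2)\in\Sigma$; the explicit $m$ and $h$ come out of the slope/period bookkeeping. One then checks $m\in(0,1)$ directly from the formula (since $(\beta-\alpha)(\tilde\ell+\varepsilon/2)+4>(\beta-\alpha)\varepsilon$ exactly when $\tilde\ell>\varepsilon/2$, which holds because $\ell\ge\varepsilon$ and the stubs are $<\varepsilon/2$), and that $\Sigma\cap[0,\varepsilon/2]^2\ne\emptyset$ by exhibiting a point — the natural candidate being the symmetric one $\sigma_1=\sigma_2=\tfrac{h}{1-m}$, which lies in $[0,\varepsilon/2]^2$ under \eqref{f:asseps}. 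The velocity formula is then just \eqref{f:velocita} with $\ell_\alpha=\sigma_1$, $\ell_\beta=\varepsilon/2-\sigma_2$ (or the symmetric split), substituted and simplified.

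The main obstacle is the bookkeeping in part (ii): correctly identifying which of the many local maxima/minima of the sawtooth $n$ is the binding one, and propagating the endpoint values through the intermediate full periods without sign errors. Once one observes (via $\Delta_\varepsilon n>0$) that it suffices to control $n$ at the single dip nearest $p$ and the single peak nearest $q$, the rest is linear algebra; but getting the constants $m,h$ to match the stated closed forms, and verifying the nonemptiness/location of $\Sigma$ within $[0,\varepsilon/2]^2$ under hypothesis \eqref{f:asseps}, is where care is needed. Parts (i) and (iii) are comparatively immediate once the monotonicity-per-phase picture from \eqref{f:conden3} is in place.
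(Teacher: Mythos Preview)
Your overall strategy is exactly the paper's: use \eqref{f:conden3} to see that $n$ is strictly decreasing in every $\beta$-phase under the hypothesis $\ell+\ell_\alpha-\ell_\beta>4/(\beta-\alpha)$, so (i) follows immediately; and for (ii)--(iii) invoke Remark~\ref{r:incrper} ($\Delta_\varepsilon n>0$) to reduce the constraint $|n|\le 1$ to a single inequality at the first local minimum after $p$ and a single inequality at the last local maximum before $q$, then solve these for $(\sigma_1,\sigma_2)$. The paper computes literally $n(p+\varepsilon/2+\sigma_1)-n(p)$ and its mirror $n(q)-n(q-\varepsilon/2-\sigma_2)$, and your symmetric candidate $\sigma_1=\sigma_2=h/(1-m)$ is exactly the paper's $\tilde\sigma$ in \eqref{f:tildesig}.

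That said, your phase bookkeeping near the endpoints is garbled and would cost you if you carried it through. When $g_\varepsilon(p)=\alpha$, the picture to the right of $p$ is: an $\alpha$-stub of length $\sigma_1$ (on which $n$ \emph{increases} from $-1$), then a full $\beta$-half of length $\varepsilon/2$ (on which $n$ decreases), landing at $p+\sigma_1+\varepsilon/2\in\varepsilon\mathcal I_{\beta,\alpha}$. There is no ``initial $\beta$-stub of length $\varepsilon/2-\sigma_1$''. Likewise the middle block $[p+\varepsilon/2+\sigma_1,\,q-\varepsilon/2-\sigma_2]$ runs from $\varepsilon\mathcal I_{\beta,\alpha}$ to $\varepsilon\mathcal I_{\alpha,\beta}$, hence has length $\tilde\ell\in\varepsilon/2+\varepsilon\Z$, not an integer number of periods. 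None of this changes the argument --- the binding point near $p$ is still the first local minimum, namely $p+\sigma_1+\varepsilon/2$, and the two linear inequalities still come out as stated --- but with your current description you would compute the wrong increments. Fix the phase order and the rest is the routine linear algebra you describe.
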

\begin{proof}
If $\ell+\ell_\alpha-\ell_\beta > 4/(\beta-\alpha)$, by \eqref{f:conden3}
the candidate Cahn--Hoffmann field $n$ is strictly decreasing in the $\beta$ 
phase.
Hence, under the assumptions in (i), 
$n$ does not satisfy the constraint $|n|\leq 1$ at least near an 
endpoint, and $L$ is not 
calibrable.

Assume now that both the endpoints belong to the $\alpha$ phase,  and let 
$\sigma_1$, $\sigma_2\in (0, \varepsilon/2)$ be such that 
$p+\varepsilon/2+\sigma_1 \in \varepsilon \mathcal{I}_{\beta,\alpha}$ and 
$q-\varepsilon/2-\sigma_2 \in \varepsilon \mathcal{I}_{\alpha,\beta}$.
Then, by \eqref{f:conden3}, we have 
\[
n(p+\varepsilon/2+\sigma_1)-n(p) =   
\sigma_1 \left(\frac{2}{\ell}+\frac{\beta-\alpha}{2 
\ell}(\ell+\ell_\beta-\ell_\alpha)\right)+ \frac{\varepsilon}{2}
\left(\frac{2}{\ell}-\frac{\beta-\alpha}{2 
\ell}(\ell+\ell_\alpha-\ell_\beta)\right).
\] 
In this case we have $ \ell_\beta-\ell_\alpha=\varepsilon/2-\sigma_1-\sigma_2$,
and $\ell=\tilde{\ell}+\varepsilon+\sigma_1+\sigma_2$, so that
\[
\begin{split}
n(p+\varepsilon/2+\sigma_1) & -n(p) \\
& =
\sigma_1 \left(\frac{2}{\ell}+\frac{\beta-\alpha}{2 
\ell}\left(\tilde{\ell}+\frac{3}{2} 
\varepsilon\right)\right)+ \frac{\varepsilon}{2}
\left(\frac{2}{\ell}-\frac{\beta-\alpha}{2 
\ell}\left(\tilde{\ell}+\frac{\varepsilon}{2}+2\sigma_1+2\sigma_2\right)\right) 
\\
& =
\frac{1}{2 
\ell}\left[\sigma_1\left(4+(\beta-\alpha)\left(\tilde{\ell}+
\frac{\varepsilon}{2}\right)\right)+
\frac{\varepsilon}{2}
\left(4-(\beta-\alpha)\left(\tilde{\ell}+\frac{\varepsilon}{2}+2\sigma_2\right)
\right)
\right].
\end{split}
\]
Hence, if $\sigma_2 \geq \sigma_1 \geq m\sigma_2+h$, we have 
\[
n(q-\varepsilon/2-\sigma_2)-n(q)\geq n(p+\varepsilon/2+\sigma_1)-n(p)\geq 0,
\]
and $n$ satisfies the constraint both in $[p,p+\varepsilon/2+\sigma_1]$ and 
in $[ q-\varepsilon/2-\sigma_2,q]$. By Remark \ref{r:incrper}, we obtain that
$|n|\leq 1$ on $L$, and hence
$L$ is calibrable. By symmetry, we obtain the same result when
$\sigma_1 \geq \sigma_2 \geq m\sigma_1+h$, and the conclusion follows.

Finally, we have that $m\sigma_2 +h =  \sigma_1 =\frac{1}{m} \sigma_2- 
\frac{h}{m}$ for $\sigma_1=\sigma_2=\tilde{\sigma}$, where
\begin{equation}\label{f:tildesig}
\tilde{\sigma}:=
\frac{\varepsilon}{2}\frac{(\beta-\alpha)(\tilde{\ell}+\varepsilon/2)-4}
{(\beta-\alpha)(\tilde{\ell}-\varepsilon/2)+4},
\end{equation}
and $\tilde{\sigma}\in (0,\varepsilon/2)$ under the assumption 
$\ell+\ell_\alpha-\ell_\beta > 4/(\beta-\alpha)$, so that 
$\Sigma \cap [0,\varepsilon/2]^2 \neq \emptyset$.

The proof of (iii) follows the same arguments. 
\end{proof}

\begin{figure}[h!]
\includegraphics[height=4cm]{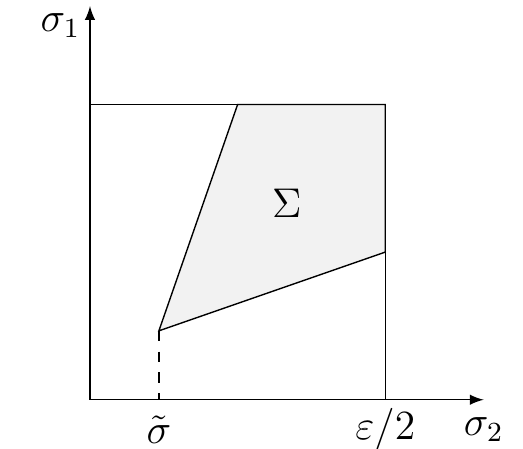}
\caption{The calibrability set $\Sigma$ in Proposition 
\ref{p:gencalibr}}\label{fig:sigma}
\end{figure}

\begin{rk}[Calibrability threshold]\label{r:symm}
In the special case when $\sigma_1=\sigma_2=\sigma>0$, Proposition 
\ref{p:gencalibr}(ii)
implies that $L$ is calibrable if and only if
$\sigma\geq \tilde{\sigma}$, where $\tilde{\sigma}$ is the quantity defined in 
\eqref{f:tildesig}.
When $\sigma=\tilde{\sigma}$, the edge $L$ is calibrated by a Cahn--Hoffmann
vector field $n$ such that $n(p)=n(p+\varepsilon/2+\tilde{\sigma})$ and 
$n(q)=n(q-\varepsilon/2-\tilde{\sigma})$. As a consequence,
the same field calibrates both the edges 
$[p,p+\varepsilon/2+\tilde{\sigma}]\times 
\{\overline{y}\}$
and $[q-\varepsilon/2-\tilde{\sigma},q]\times \{\overline{y}\}$
(as edges with zero $\varphi$--curvature, see Proposition \ref{r:hcurvz}),
and the edge 
$[p+\varepsilon/2+\tilde{\sigma},q-\varepsilon/2-\tilde{\sigma}]\times 
\{\overline{y}\}$ (as edges with positive $\varphi$--curvature, see Proposition 
\ref{p:oninterf}) with the same velocity.

Similarly, in the case (iii) Proposition \ref{p:gencalibr}, when 
$\sigma=\sigma^*$, the edge $L$ is calibrated by a Cahn--Hoffmann
vector field $n$ such that $n(p)=n(p+\varepsilon/2+\sigma^*)$, and 
the same field calibrates both the edge 
$[p,p+\varepsilon/2+\sigma^*]\times 
\{\overline{y}\}$
(as edge with zero $\varphi$--curvature),
and the edge 
$[p+\varepsilon/2+\tilde{\sigma},q]\times 
\{\overline{y}\}$ (as edges with positive $\varphi$--curvature) with the same 
velocity.
\end{rk}

 \begin{rk}[Symmetric edges with positive $\varphi$--curvature]\label{r:delta}
 If $L=[-\ell/2, \ell/2]\times \{\overline{y}\}$, $\ell>0$, 
  $\overline{y}\in\R$, is a 
  horizontal edge with positive 
  $\varphi$--curvature, the previous results reed as follows. 
 Setting $x_N=(N+1/4)\varepsilon$, $N\in\N$, let us define
  $\overline{N}_\varepsilon\in\N$ by
 \begin{equation}\label{f:ovenne}
 \left(2x_N+\frac{\varepsilon}{2}\right)> \frac{4}{\beta-\alpha}, \qquad 
 \forall\ N \geq \overline{N}_\varepsilon,
 \end{equation}
and 
 $\delta(N)\in(0,\varepsilon)$ by
 \begin{equation}\label{f:delta}
 \delta(N)=\dfrac{x_N(\beta-\alpha) \varepsilon}{2+\left(2x_N- 
 \dfrac{\varepsilon}{2}\right)\dfrac{(\beta-\alpha)}{2}},
 \end{equation}
 then
 $\delta(N)>\varepsilon/2$ if and only if 
 $N\geq \overline{N}_\varepsilon$. 
 Hence, if $\ell/2=x_{N_\varepsilon(\ell)}+\delta(\ell)$, with
$0\leq \delta(\ell)  <  \varepsilon$, and  
 \begin{equation}\label{f:defnelle}
 N_\varepsilon(\ell)=\left\lfloor\dfrac{\ell}{2\varepsilon}-
 \dfrac{1}{4}\right\rfloor,  
 \end{equation}
 the following hold.
 \begin{itemize}
 \item[(i)] If $N_\varepsilon(\ell) < \overline{N}_\varepsilon$, then the edge 
 $L$ is 
 calibrable.
 \item[(ii)] If $N_\varepsilon(\ell) \geq \overline{N}_\varepsilon$ then the 
 edge $L$ is 
 calibrable if 
 and 
 only if  $\delta(\ell) \geq \delta(N_\varepsilon(\ell))$.
 \end{itemize} 

\end{rk}
  
The analogous of Proposition \ref{p:gencalibr} concerning the long 
edges 
with negative $\varphi$--curvature is the following.

\begin{prop}\label{p:gencalibrneg}
Let $L=[p,q]\times \{\overline{y}\}$ be a horizontal edge with negative 
$\varphi$--curvature, and such that $\ell+\ell_\beta-\ell_\alpha > 
4/(\beta-\alpha)$. Then the following hold.
\begin{itemize}
\item[(i)] If either $g_\varepsilon (p)=\alpha$, or $g_\varepsilon 
(q)=\alpha$, or $p\in \varepsilon\mathcal{I}_{\beta,\alpha}$, or
$q\in \varepsilon\mathcal{I}_{\alpha,\beta}$, 
then $L$ is not calibrable.
\item[(ii)] If $g_\varepsilon (p)=g_\varepsilon (q) =\beta$, let 
$\sigma_1$, $\sigma_2\in (0, \varepsilon/2)$ be such that 
$p+\varepsilon/2+\sigma_1 \in \varepsilon \mathcal{I}_{\alpha,\beta}$ and 
$q-\varepsilon/2-\sigma_2 \in \varepsilon \mathcal{I}_{\beta,\alpha}$, and 
let $\tilde{\ell}$ be the length of the interval $[p+\varepsilon/2+\sigma_1,
q-\varepsilon/2-\sigma_2]$. 

Then there exist $m\in (0,1)$ and $h>0$ such that
$L$ is calibrable with velocity
\[
v_L=-\frac{2}{\ell}+\frac{\alpha+\beta}{2}-\frac{\beta-\alpha}{2\ell}
\left(\frac{\varepsilon}{2}-\sigma_1-\sigma_2\right)
\]
if and only if $(\sigma_1,\sigma_2)\in \left\{m\sigma_2 +h \leq  
\sigma_1 \leq 
\frac{1}{m} \sigma_2- \frac{h}{m}\right\}$.

\item[(iii)] if $g_\varepsilon(p)=\beta$, and $q\in \varepsilon 
\mathcal{I}_{\beta,\alpha}$ (resp. $p\in \varepsilon 
\mathcal{I}_{\alpha, 
\beta}$, and $g_\varepsilon(q)=\beta$), let $\sigma\in (0,\varepsilon/2)$
be such that $p+\sigma+\varepsilon/2\in \varepsilon \mathcal{I}_{\alpha,\beta}$,
let $\ell^*$ be the length of the interval $[p+\varepsilon/2+\sigma, q]$ (
resp. of $[p, q-\varepsilon/2-\sigma]$), and let
\[
\sigma^*= \frac{\varepsilon}{2}\frac{(\beta-\alpha)(\ell^*+\varepsilon/2)-4}
{(\beta-\alpha)(\ell^*-\varepsilon/2)+4}.
\]
Then $L$ is calibrable if and only if $\sigma \geq \sigma^*$.
\end{itemize}
\end{prop}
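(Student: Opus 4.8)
The plan is to mirror the proof of Proposition \ref{p:gencalibr} under the reflection that exchanges the roles of $\alpha$ and $\beta$. Concretely, one observes that an edge $L$ with negative $\varphi$--curvature and $n(p)=1$, $n(q)=-1$ behaves, as far as the candidate field $n$ produced by \eqref{f:conden3} is concerned, exactly like an edge with positive $\varphi$--curvature after the substitutions $\chi_L\mapsto -\chi_L$, $n\mapsto -n$, and $g_\varepsilon \mapsto (\alpha+\beta)-g_\varepsilon$ (which swaps $\alpha$--phases with $\beta$--phases, hence swaps $\ell_\alpha$ with $\ell_\beta$ and $\mathcal{I}_{\alpha,\beta}$ with $\mathcal{I}_{\beta,\alpha}$). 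Under this correspondence the hypothesis $\ell+\ell_\beta-\ell_\alpha>4/(\beta-\alpha)$ becomes precisely the hypothesis of Proposition \ref{p:gencalibr}, and the constraint $|n|\leq 1$ is invariant; the velocity formula \eqref{f:velocita} transforms into the one claimed in the statement because the sign of the term $\frac{\beta-\alpha}{2\ell}(\ell_\beta-\ell_\alpha)$ flips together with $\chi_L$.

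First I would spell out part (i): by \eqref{f:conden3}, since $\ell+\ell_\beta-\ell_\alpha>4/(\beta-\alpha)$, the slope of $n$ in the $\alpha$--phase is $\frac{1}{2\ell}\bigl(-4-(\beta-\alpha)(\ell+\ell_\beta-\ell_\alpha)\bigr)<0$, i.e. $n$ is strictly decreasing through every $\alpha$--phase. Hence if either endpoint lies in an $\alpha$--phase, or $p\in\varepsilon\mathcal{I}_{\beta,\alpha}$ (so $n$ enters an $\alpha$--phase immediately to the right of $p$ starting from $n(p)=1$), or $q\in\varepsilon\mathcal{I}_{\alpha,\beta}$ (so $n$ is leaving an $\alpha$--phase to arrive at $n(q)=-1$), a short computation as in Remark \ref{e:latino} shows $|n|>1$ near that endpoint, so $L$ is not calibrable.

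For part (ii) I would follow the same computation as in Proposition \ref{p:gencalibr}(ii) verbatim after the substitution above: with $g_\varepsilon(p)=g_\varepsilon(q)=\beta$ and $p+\varepsilon/2+\sigma_1\in\varepsilon\mathcal{I}_{\alpha,\beta}$, $q-\varepsilon/2-\sigma_2\in\varepsilon\mathcal{I}_{\beta,\alpha}$, compute $n(p+\varepsilon/2+\sigma_1)-n(p)$ using \eqref{f:conden3}, use $\ell_\alpha-\ell_\beta=\varepsilon/2-\sigma_1-\sigma_2$ and $\ell=\tilde\ell+\varepsilon+\sigma_1+\sigma_2$ to get an affine expression in $(\sigma_1,\sigma_2)$ with coefficients of definite sign, and read off the region where this quantity and its symmetric counterpart at $q$ are both nonnegative; this gives the linear constraints $m\sigma_2+h\leq\sigma_1\leq \frac1m\sigma_2-\frac hm$ with $m=\varepsilon\frac{\beta-\alpha}{(\beta-\alpha)(\tilde\ell+\varepsilon/2)+4}\in(0,1)$ and $h=\frac\varepsilon2\frac{(\beta-\alpha)(\tilde\ell+\varepsilon/2)-4}{(\beta-\alpha)(\tilde\ell+\varepsilon/2)+4}$, with the threshold $\tilde\sigma$ of \eqref{f:tildesig} witnessing nonemptiness of $\Sigma\cap[0,\varepsilon/2]^2$. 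Then Remark \ref{r:incrper} (whose sign conclusion on $\Delta_\varepsilon n$ is by its nature symmetric in $\chi_L$) propagates the bound from neighborhoods of the endpoints to all of $L$. Part (iii) is the degenerate case $\sigma_2\to 0$ (or $\sigma_1\to 0$) of the same argument, yielding the threshold $\sigma^*$; here one uses that one endpoint is already on an interface so only a single neighborhood estimate is needed.

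The only real subtlety — and the step I would be most careful about — is verifying that Remark \ref{r:incrper}, in particular the sign of $\Delta_\varepsilon n$ under assumption \eqref{f:asseps}, still gives what we need in the negative--curvature case: here $\chi_L=-1$ so we want $\Delta_\varepsilon n<0$, and from \eqref{e:inaper} $\Delta_\varepsilon n=\frac{\varepsilon}{2\ell}\bigl(-4+(\beta-\alpha)(\ell_\beta-\ell_\alpha)\bigr)$, which is negative exactly because $(\beta-\alpha)|\ell_\beta-\ell_\alpha|\leq(\beta-\alpha)\varepsilon/2<4$ by \eqref{f:asseps}; so the sign is still opposite to that of the endpoint--neighborhood excursions and the propagation argument goes through. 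Everything else is the bookkeeping of Proposition \ref{p:gencalibr} read through the $\alpha\leftrightarrow\beta$ mirror, so I would simply write ``the proof follows the same arguments as in Proposition \ref{p:gencalibr}, exchanging the roles of $\alpha$ and $\beta$'' for parts (ii)–(iii) and give the short explicit slope computation only for part (i).
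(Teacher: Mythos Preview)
Your approach is exactly what the paper does: it states Proposition~\ref{p:gencalibrneg} without proof, leaving implicit that the argument is the mirror of Proposition~\ref{p:gencalibr} under the $\alpha\leftrightarrow\beta$ reflection you describe. One slip to correct in your explicit computation for part~(i): from \eqref{f:conden3} with $\chi_L=-1$ the slope in the $\alpha$--phase is $\frac{1}{2\ell}\bigl(-4+(\beta-\alpha)(\ell+\ell_\beta-\ell_\alpha)\bigr)$, which is \emph{positive} under the hypothesis, not negative as you wrote; the constraint $|n|\le 1$ is still violated (since $n(p)=1$ and $n$ increases, or $n(q)=-1$ with $n$ increasing toward it), so your conclusion and the rest of the argument are unaffected.
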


\section{Effective motion as $\varepsilon\to 0$}\label{s:eff}

\subsection{Evolution of rectangles}\label{s:rect}

This section is devoted to the proof of the following result.

\begin{teor}[Effective motion of coordinate rectangles]\label{t:rect}
Let $R_0$ be a coordinate rectangle, and let
$\ell_{1,0}$, $\ell_{2,0}$ be the length of its horizontal and vertical edges, 
respectively. For every $\varepsilon>0$, let $R_0^\varepsilon$ be a coordinate 
rectangle such that
$
d_H(R_0,R_0^\varepsilon)<\varepsilon.
$
Then there exists a variational crystalline curvature flow of 
$R_0^\varepsilon$ with forcing term $g_\varepsilon$. Moreover,  every 
variational crystalline curvature flow $E^\varepsilon(t)$  of 
$R_0^\varepsilon$
converges, in the Hausdorff topology and 
locally uniformly in time, as $\varepsilon \to 0$ to the coordinate rectangle 
$R(t)$ whose horizontal and vertical edges have lengths
$\ell_{1}(t), \ell_{2}(t)$ solving the system of ODEs   
\begin{equation}\label{f:evofin}
\begin{cases}
\ell_1'=-2\, H_g(\ell_2), \\
\ell_2'=-\dfrac{4}{\ell_1}-(\alpha+\beta),
\end{cases}
\end{equation}
with initial datum $(\ell_{1}(0), \ell_{2}(0))=(\ell_{1,0}, 
\ell_{2,0})$. The function $H_g(\ell)\colon (0,+\infty) \to \R$ is a 
truncation of the harmonic mean defined by
\begin{equation}\label{f:harmean}
H_g(\ell):=
\begin{cases}
0, & \text{if}\ \ell\geq -\dfrac{2}{\alpha}, \\
\left\langle \dfrac{2}{\ell} +g\right\rangle = 
\dfrac{1}{ \displaystyle 
\int_0^1 \frac{1} {2/\ell_2+g(s)}\, ds}= \dfrac{(2 
+\alpha\ell_2)(2+\beta 
\ell_2)}{\ell_2\left(2+\dfrac{\alpha+\beta}{2} \ell_2\right)}, & 
\text{otherwise}.
\end{cases}
\end{equation}
\end{teor}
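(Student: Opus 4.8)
The plan is to track the evolution of the four edges of an approximating rectangle $E^\varepsilon(t)$ and show that, in the limit $\varepsilon\to 0$, the two vertical edges move with the averaged velocity $\frac{\alpha+\beta}{2}$ plus the crystalline curvature $\pm 2/\ell_1$, while the two horizontal edges either remain pinned (when long) or move with the harmonic-mean velocity. First I would set up the discrete-in-$\varepsilon$ description: since $R_0^\varepsilon$ is a coordinate rectangle and $g_\varepsilon$ depends only on $x$, a variational crystalline flow starting from $R_0^\varepsilon$ should remain a union of (at most three) rectangular pieces along each horizontal edge — indeed the results of Section \ref{s:calibrability} tell us exactly when a horizontal edge is calibrable, and when it is not (Proposition \ref{p:gencalibr}), the edge must break. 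The key structural claim is that, because the vertical edges always move (they are always calibrable by Section \ref{rk:vert}) with velocity $\frac{2}{\ell_1}\chi + \gamma_\varepsilon(\overline x)$, the horizontal edge of the evolving rectangle is repeatedly forced to have one endpoint crossing an interface of $\varepsilon\mathcal I$; each time the long horizontal edge is about to violate the constraint $|n|\le 1$, a short facet of length $O(\varepsilon)$ is created near the endpoint (the "facet breaking" mentioned in the introduction), travels across one period, and is reabsorbed. This is precisely the picture described by Remark \ref{r:symm}, where at the threshold $\sigma=\tilde\sigma$ the same Cahn--Hoffmann field calibrates both the short facet and the long central facet with a common velocity.

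The heart of the argument is then a homogenization computation for the horizontal motion. Over one period $\varepsilon$, the right endpoint of the long horizontal edge advances across the two phases $\alpha$ and $\beta$; while it sits in the $\beta$-phase the adjacent short facet (of length $\le \varepsilon/2$) moves with velocity close to $\beta + \frac{2}{\ell_2}$, and while it sits in the $\alpha$-phase with velocity close to $\alpha + \frac{2}{\ell_2}$ (using \eqref{f:velocita} and Remark \ref{e:latino} for the short edges, and recalling $\chi_{L}=1$ for the vertical-direction curvature contribution $2/\ell_2$ felt by the horizontal displacement). The time to traverse a period is therefore $\int_0^1 \frac{\varepsilon\,ds}{2/\ell_2 + g(s)}$ up to an $o(1)$ correction, so the effective horizontal velocity is the harmonic mean $\left\langle \frac{2}{\ell_2}+g\right\rangle$, which is exactly the middle expression in \eqref{f:harmean}. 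The truncation to $H_g(\ell_2)=0$ for $\ell_2 \ge -2/\alpha$ comes from the pinning mechanism: when $2/\ell_2 + \alpha \le 0$, i.e. $\ell_2 \ge -2/\alpha$, a horizontal edge sitting on (or crossing into) an $\alpha$-phase interface can be calibrated with velocity zero — by Section \ref{rk:vert} and Proposition \ref{r:hcurvz} the short facet stalls at the interface $\varepsilon\mathcal I_{\beta,\alpha}$ and never completes the period, so the denominator in the harmonic mean blows up and the effective velocity vanishes. The vertical equation $\ell_2' = -4/\ell_1 - (\alpha+\beta)$ is comparatively immediate: each vertical edge contributes velocity $\frac{2}{\ell_1} + \gamma_\varepsilon(\overline x)$, and since the two vertical edges sit at opposite ends of a long horizontal edge whose length is $\approx \ell_1$ and whose traversal of the $x$-axis is uniform, the average of $\gamma_\varepsilon$ over the relevant positions converges to $\frac{\alpha+\beta}{2}$; summing the two vertical edges gives the factor $4/\ell_1$ and $\alpha+\beta$.

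To make this rigorous I would: (1) prove existence of the variational flow for fixed $\varepsilon$ by the piecewise-ODE construction, showing the solution is a finite union of rectangles and that facet creation/reabsorption events are locally finite in time, controlled by Propositions \ref{p:fracon}--\ref{p:gencalibr}; (2) derive a priori bounds — the outer and inner "rectangular hulls" $R^\varepsilon_{\mathrm{out}}(t)\supseteq E^\varepsilon(t)\supseteq R^\varepsilon_{\mathrm{in}}(t)$ have horizontal and vertical extents differing by $O(\varepsilon)$, and both satisfy, up to $o(1)$ errors, the ODE system \eqref{f:evofin}; (3) pass to the limit using the stability of ODEs under uniform convergence of the right-hand side, together with local Lipschitz continuity of $H_g$ away from the pinning threshold and a separate monotonicity/comparison argument at the threshold $\ell_2 = -2/\alpha$ itself. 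The main obstacle I expect is step (1) together with the bookkeeping in (2): one must show that the many small facets created and reabsorbed along each horizontal edge do not accumulate in an uncontrolled way and that their net effect on the position of the long facet is captured exactly by the harmonic-mean integral, with error $o(1)$ as $\varepsilon\to 0$ — this requires careful use of the calibrability thresholds $\tilde\sigma$, $\sigma^*$ from Proposition \ref{p:gencalibr} and Remark \ref{r:symm} to track the phase of the endpoint through each period. A secondary subtlety is the behavior exactly at the pinning threshold $\ell_2 = -2/\alpha$, where $H_g$ has a corner; there one needs that the limit evolution, which decreases $\ell_2$ monotonically (since $-4/\ell_1 - (\alpha+\beta)$ has a sign that must be analyzed), either reaches the pinned regime and stays, or never enters it, so the corner is crossed at most once and causes no ambiguity.
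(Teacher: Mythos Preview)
Your overall strategy is right, and it is the paper's: a harmonic mean governs the motion in one direction, an arithmetic mean in the other, there is a pinning threshold, and microscopic facet breaking must be shown to be an $O(\varepsilon)$ perturbation. But you have systematically exchanged the roles of horizontal and vertical edges, and the swap is not a harmless relabeling --- it misidentifies the mechanism behind each of the two averages. A \emph{vertical} edge has length $\ell_2$ (so its curvature is $2/\ell_2$, not $2/\ell_1$), lies on a line $x=\overline{x}$, and has normal velocity $2/\ell_2 + g_\varepsilon(\overline{x})$: constant along the edge, but varying as $\overline{x}$ sweeps through the layered medium. \emph{This} is the source of the harmonic mean: the time for a vertical edge to traverse one $\varepsilon$-period is $\int_0^\varepsilon ds/\bigl(2/\ell_2 + g(s/\varepsilon)\bigr)$, and pinning ($H_g(\ell_2)=0$) occurs when $2/\ell_2 + \alpha \le 0$, i.e.\ $\ell_2 \ge -2/\alpha$, because then the vertical edge cannot exit an $\alpha$-phase. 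Conversely, a \emph{horizontal} edge has length $\ell_1$, moves in the $y$-direction, and sees a forcing that varies along the edge but not as the edge translates; its velocity is $2/\ell_1 + \frac{1}{\ell_1}\int_L g_\varepsilon \to 2/\ell_1 + (\alpha+\beta)/2$, which is the arithmetic mean feeding $\ell_2'$. Your derivation of $\ell_2' = -4/\ell_1 -(\alpha+\beta)$ via ``each vertical edge contributes $2/\ell_1 + \gamma_\varepsilon(\overline{x})$ and the average of $\gamma_\varepsilon$ over positions is $(\alpha+\beta)/2$'' is therefore wrong on both counts: wrong curvature and wrong averaging mechanism.

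You also overweight the breaking phenomenon. In the paper the harmonic mean is first obtained in Case~1 ($\ell_{1,0} < 4/(\beta-\alpha)$, $\ell_{2,0} \le -2/\alpha$), where the horizontal edges stay calibrable throughout (Proposition~\ref{p:fracon}) and the evolution remains a genuine rectangle; the homogenization is a direct passage to the limit in the ODE for $(\ell_1^\varepsilon,\ell_2^\varepsilon)$. Breaking enters only in Case~3 ($\ell_{1,0} \ge 4/(\beta-\alpha)$), and its role there is not to produce the harmonic mean but to be \emph{controlled}: one checks that each break-and-recompose cycle perturbs the rectangular hull by $O(\varepsilon)$ over a time span of order $\varepsilon$, so the limit coincides with Case~1. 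The short facets created by breaking are horizontal, have zero $\varphi$-curvature, and move vertically with the velocity of Remark~\ref{e:latino}; they carry no $2/\ell_2$ term and are not the objects traversing the phases.
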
 

\begin{rk} \label{r:odedis}
If we assume that the evolution $E^\varepsilon(t)$ of a coordinate rectangle 
$R_0^\varepsilon$ is a coordinate rectangle for $t\in[0,T]$, and we denote 
by $(x_1(t),y_1(t))$, $(x_2(t),y_1(t))$, $(x_2(t),y_2(t))$, $(x_1(t),y_2(t))$,
with $x_1(t) < x_2(t)$, and $y_1(t) > y_2(t)$, the coordinates of the 
vertices 
of 
$E^\varepsilon(t)$, the evolution of these points is governed by the system 
of  ODEs'
\begin{equation}\label{f:rrhsdis}
\begin{cases}
x_1'= \dfrac{2}{y_1-y_2}+g_\varepsilon(x_1) \\[10pt]
x_2'= -\dfrac{2}{y_1-y_2}-g_\varepsilon(x_2) \\[10pt]
y_1'= -\dfrac{2}{x_2-x_1}- \dfrac{\alpha+\beta}{2}- 
h_\varepsilon(y_1,y_2) \\[10pt]
y_2'= \dfrac{2}{x_2-x_1}+ \dfrac{\alpha+\beta}{2}+
h_\varepsilon(y_1,y_2)
\end{cases}
\end{equation}
in the domain $D:=\{(x_1,x_2,y_2,y_2) \colon x_1< x_2,\ y_1>y_2\}$. 
The Lipshitz function 
$h_\varepsilon\colon \{ y_1<y_2\}\subseteq \R^2 \to \R$ takes into 
account the 
small reminder varying
in $[-\varepsilon/2,\varepsilon/2]$ and appearing in \eqref{f:velocita}.

Applying the classical results of differential equations with discontinouos 
right--hand side (see \cite{Fi}, Chapter 2), we obtain the following 
properties for the solutions.

\begin{itemize}
\item[(i)] For every $P_0\in D$ there exists a (Filippov) solution to 
\eqref{f:rrhsdis}
starting from $P_0$.

\item[(ii)] For every $P_0\in D\cap \{y_1-y_2 \leq -2/\alpha\}$ there 
exists
a unique local solution to \eqref{f:rrhsdis}
starting from $P_0$, and defined as long as it satisfies
$y_1(t)-y_2(t)< -2/\alpha$.

\item[(iii)] If $P_0\in D\cap\{y_1-y_2 > -2/\alpha\}$, the uniqueness 
of 
the solution starting from $P_0$ fails if and only if either 
$x_1$ or $x_2$ belongs to the set of ``unstable discontinuities'' 
$U=\{\pm(x_N+\varepsilon/2), N\in\N\}$ (see \eqref{f:defnelle} for the 
definition of $x_N$). 
If this does not occur, then 
the solution is unique until the first time $t_0$ for which 
either $x_1(t_0)$ or $x_2(t_0)$ belong to $U$.

\item[(iv)] If $P_0\in D\cap\{y_1-y_2 > -2/\alpha\}$, and  
$x_1$ (resp.  $x_2$) belongs  to the set of ``stable discontinuities''
$S=\{\pm x_N, N\in\N\}$, then $x_1'(t)=0$ (resp.  $x_2'(t)=0$) as long 
as 
the solution satisfies $y_1(t)-y_2(t) > -2/\alpha$.
\end{itemize}

Based on results of Section \ref{s:calibrability}, a 
coordinate rectangle may not be calibrable, so that we cannot expect the 
evolution 
to preserve the geometry of the initial datum. In the proof of Theorem 
\ref{t:rect} we combine the previous properties of the solutions to 
\eqref{f:rrhsdis} with a carefull description
of why and how the geometry changes during the evolution.
\end{rk}

\begin{proof}[Proof of Theorem \ref{t:rect}]
We first assume that $R_0$ is centered at the origin. Using the 
notation
\[
R(\ell_1,\ell_2)=\left[-\lum, \lum\right]\times \left[-\ldm,\ldm\right].
\] 
for this type of rectangles, $R_0=R(\ell_{1,0},\ell_{2,0})$.

We also assume that $R_0^\varepsilon=R(2x_{N_\varepsilon},\ell_{2,0})$, where 
${N_\varepsilon}={N_\varepsilon}(\ell_{1,0})$ is defined in \eqref{f:defnelle}.
In this case, recalling that the calibrability of a coordinate rectangle 
depends only 
on the calibrability of its horizontal edges (see Section 
\ref{rk:vert}), 
we have that $R_0^\varepsilon$ is calibrable (see Remark \ref{e:ln}),
and the evolution starts according to \eqref{f:rrhsdis}.

\medskip

\noindent{Case 1 (homogenized velocities): {$\ell_{1,0}< 
4/(\beta-\alpha)$}, 
$\ell_{2,0} \leq -2/\alpha$.}
The vertical edges of 
$R_0^\varepsilon=R(2x_{N_\varepsilon},\ell_{2,0}^\varepsilon)$
are calibrable with velocity $v_2^\varepsilon(0)\geq  0$, and, by  
Remark 
\ref{e:ln}, the 
horizontal edges are also calibrable with velocity
\[
v_1^\varepsilon(0)=\frac{1}{x_{N_\varepsilon}}+\frac{\alpha+\beta}{2}+
\frac{(\alpha-\beta)\varepsilon}{8x_{N_\varepsilon}}> 0.
\]
Then, by Section 
\ref{rk:vert}, Proposition 
\ref{p:fracon}, and Remark \ref{r:odedis}(ii), 
the (unique) evolution is given by shrinking coordinate rectangles  
$R(\ell_1^\varepsilon(t)),\ell_2^\varepsilon(t))$, and it 
is governed by system \eqref{f:rrhsdis}, or equivalently, by
\begin{equation}\label{f:appc1}
\begin{cases}
{\ell_1^{\varepsilon}}'= -\dfrac{4}{\ell_2^\varepsilon}-2g
\left(\dfrac{\ell_1^\varepsilon}{\varepsilon}\right), \\[8pt]
{\ell_2^\varepsilon}'=-\dfrac{4}{\ell_1^\varepsilon}-(\alpha+\beta)-
\dfrac{(\alpha-\beta)\varepsilon}{2\ell_1^\varepsilon}.
\end{cases}
\end{equation} 

In order to pass to the limit as $\varepsilon \to 0^+$ and to find the 
effective evolution, notice that
$\ell_i^\varepsilon\in (\ell_{i,\beta}, \ell_{i,\alpha})$, 
$i=1,2$, 
where $(\ell_{1,\alpha}, \ell_{2,\alpha})$ $(\ell_{1,\beta}, 
\ell_{2,\beta})$ are the solution to \eqref{f:forzc} with initial 
datum $(2x_{N_\varepsilon},\ell_{2,0}^\varepsilon)$, and forcing term 
$\gamma=\alpha$ and $\gamma=\beta$ respectively. 
Hence, for $T>0$, $\ell_i^\varepsilon$ are equilipschitz in 
$[0,T]$. Let $(\ell_1(t), \ell_2(t))$ the uniform limit of a suitable 
subsequence of $(\ell_1^\varepsilon(t)),\ell_2^\varepsilon(t))$ in [0,T].
Being a uniform limits of monotone non--increasing functions, 
$\ell_i(t)$ are 
non--increasing and hence differentiable almost everywhere in $[0,T]$. 
Moreover, for $t\in (0,T)$ and $\sigma>0$ such that $t+\sigma < T$, we have 
$\ell_2^\varepsilon (\tau)= \ell_2^\varepsilon (t)+O(\sigma)$ for $\tau\in 
[t,t+\sigma]$, and hence
\[
\int_{\ell_1^\varepsilon(t)}^{\ell_1^\varepsilon(t+\sigma)} 
\dfrac{1}{2/\ell_2^\varepsilon(t)+ 
g\left(s/\varepsilon\right)}\, ds +o(\sigma) =-2 \sigma
\]
and a passage to the limit as $\varepsilon \to 0$ gives
\begin{equation}\label{f:armean}
\int_{\ell_1(t)}^{\ell_1(t+\sigma)} \frac{1}{\left\langle 
2/\ell_2(t)+g 
\right\rangle}\, ds + o(\sigma) =-2 \sigma
\end{equation}
where $\langle 2/\ell_2+g \rangle$ is the harmonic mean of 
$2/\ell_2+g$ in 
$[0,1]$.

If $t$ is a differentiability point for $\ell_1$, 
from \eqref{f:armean} it follows that
$\ell_1'(t)=-2 \langle 2/\ell_2(t)+g \rangle$.
In conclusion, the effective evolution of the rectangle 
$R(\ell_{1,0},\ell_{2,0})$
is given by rectangles $R(\ell_1(t),\ell_2(t))$ satisfying  the evolution law
\begin{equation}\label{f:caseone}
\begin{cases}
\ell_1'=-2\left\langle 
\dfrac{2}{\ell_2}+g 
\right\rangle , \\[8pt]
\ell_2'=-\dfrac{4}{\ell_1}-(\alpha+\beta).
\end{cases}
\end{equation}
In this case all the edges move inwards until a finite extinction time.

\medskip

\noindent{Case 2 (mesoscopic pinning): 
{$\ell_{2,0}\geq-2/\alpha$}.} 
By 
Section 
\ref{rk:vert}, 
the vertical edges of $R_0^\varepsilon=R(2x_{N_\varepsilon},\ell_{2,0})$
are calibrable with velocity $v_2^\varepsilon(0)=0$. Moreover, by Proposition 
\ref{p:oninterf}, the horizontal edges of 
$R_0^\varepsilon=(x_{N_\varepsilon},\ell_{2,0})$ are also calibrable with 
velocity
\[
v_1^\varepsilon(0)=\frac{1}{x_{N_\varepsilon}}+\frac{\alpha+\beta}{2}+
\frac{(\alpha-\beta)\varepsilon}{8x_{N_\varepsilon}}.
\]
If $v_1^\varepsilon(0)\leq 0$, then the length of the vertical edges 
does not decrease and 
the (unique) evolution at the mesoscopic scale is given by rectangles 
$R^\varepsilon(\ell_1^\varepsilon(t),\ell_2^\varepsilon(t))$, $t>0$, where
\[
\begin{cases}
{\ell_1^\varepsilon}'=0, \\
{\ell_2^\varepsilon}'= -\dfrac{2}{x_{N_\varepsilon}}-(\alpha+\beta)-
\dfrac{(\alpha-\beta)\varepsilon}{4x_{N_\varepsilon}}.
\end{cases}
\]
Taking the limit as $\varepsilon \to 0$ we obtain that the effective 
evolution
is given by $R(\ell_1(t),\ell_2(t))$, $t>0$, where
\begin{equation}\label{f:pinn}
\begin{cases}
\ell_1'=0, \\
\ell_2'= -\dfrac{4}{\ell_{1,0}}-(\alpha+\beta).
\end{cases}
\end{equation}
Hence, if $\alpha+\beta<0$,  the rectangles 
$R(-4/(\alpha+\beta),\ell_2)$ with 
$\ell_{2,0}\geq-2/\alpha$ are unstable equilibria, while,
if  $v_0=4/\ell_{1,0}+(\alpha+\beta)<0$, the rectangle expands 
in 
the vertical direction with constant velocity $v_0$, keeping the 
length of the horizontal edges fixed.

If, instead, $v_0>0$, then the horizontal edges start to move 
inward, so that the length of the vertical edges decreases, and 
\eqref{f:pinn} 
describes the evolution for $t\in [0,\overline{t}]$, where
\[
\overline{t}=\sup\{t>0\colon\ \ell_2(s))\geq-2/\alpha\ \forall s\in[0,t)\}.
\]
Starting from $R(\ell_1(\overline{t}),-2/\alpha)$, the 
evolution is the one shown in Case 1 or 3, respectively.

\medskip 

\noindent{Case 3 (mesoscopic breaking): 
{$\ell_{1,0}\geq 4/(\beta-\alpha), 
\ \ell_{2,0} \leq -2/\alpha$}.}
By Section 
\ref{rk:vert}, 
the vertical edges of 
$R_0^\varepsilon=R(2x_{N_\varepsilon},\ell_{2,0})$
are calibrable with velocity $v_2(0)\geq 0$, and,  by Remark 
\ref{e:ln}, the 
horizontal edges are calibrable with velocity
\[
v_1^\varepsilon(0)=\frac{1}{x_{N_\varepsilon}}+\frac{\alpha+\beta}{2}+
\frac{(\alpha-\beta)\varepsilon}{8x_{N_\varepsilon}}.
\]
By Remark 
\ref{r:delta}, the evolution is a rectangle, with decreasing length of the 
horizontal edges $\ell_{1}^\varepsilon(t)$, until the time $t_\delta>0$ 
such that
$\ell_{1}^\varepsilon(t_\delta)=2x_{N_\varepsilon}+2\delta(N_\varepsilon-1)$, 
where $\delta(N_\varepsilon-1)$ is the calibrability threshold given 
in \eqref{f:delta}. 
The horizontal edges of the rectangle  
cannot be calibrable after the time $t_\delta$. Nevertheless,
by Remark \ref{r:symm}, the Cahn--Hoffman vector field calibrating the 
horizontal edges at time $t_\delta$ equals the one calibrating 
separately the horizontal edges with positive $\varphi$--curvature
\[
E_c^{\pm} := [-x_{N_\varepsilon-1}, x_{N_\varepsilon-1}]\times \{\pm 
\ell_2^\varepsilon(t_\delta)/2\}
\]
with velocity 
\[
v_c^\varepsilon=\frac{1}{x_{N_\varepsilon-1}}+\frac{\alpha+\beta}{2}+
\frac{(\alpha-\beta)\varepsilon}{8x_{N_\varepsilon-1}},
\] 
and the horizontal edges with zero curvature 
\[
\begin{split}
E_l^\pm & :=
\left[\frac{-\ell_{1}(t_\delta)}{2},-x_{N_\varepsilon-1}\right]\times 
\left\{\pm 
\frac{\ell_2^\varepsilon(t_\delta)}{2}\right\} \\
E_r^\pm & := 
\left[x_{N_\varepsilon-1},\frac{\ell_{1}(t_\delta)}{2}\right]\times 
\left\{\pm 
\frac{\ell_2^\varepsilon(t_\delta)}{2}\right\}
\end{split}
\]
with the same velocity (see Figure \ref{fig:fig12}). 
\begin{figure}[h!]
\includegraphics[height=5cm]{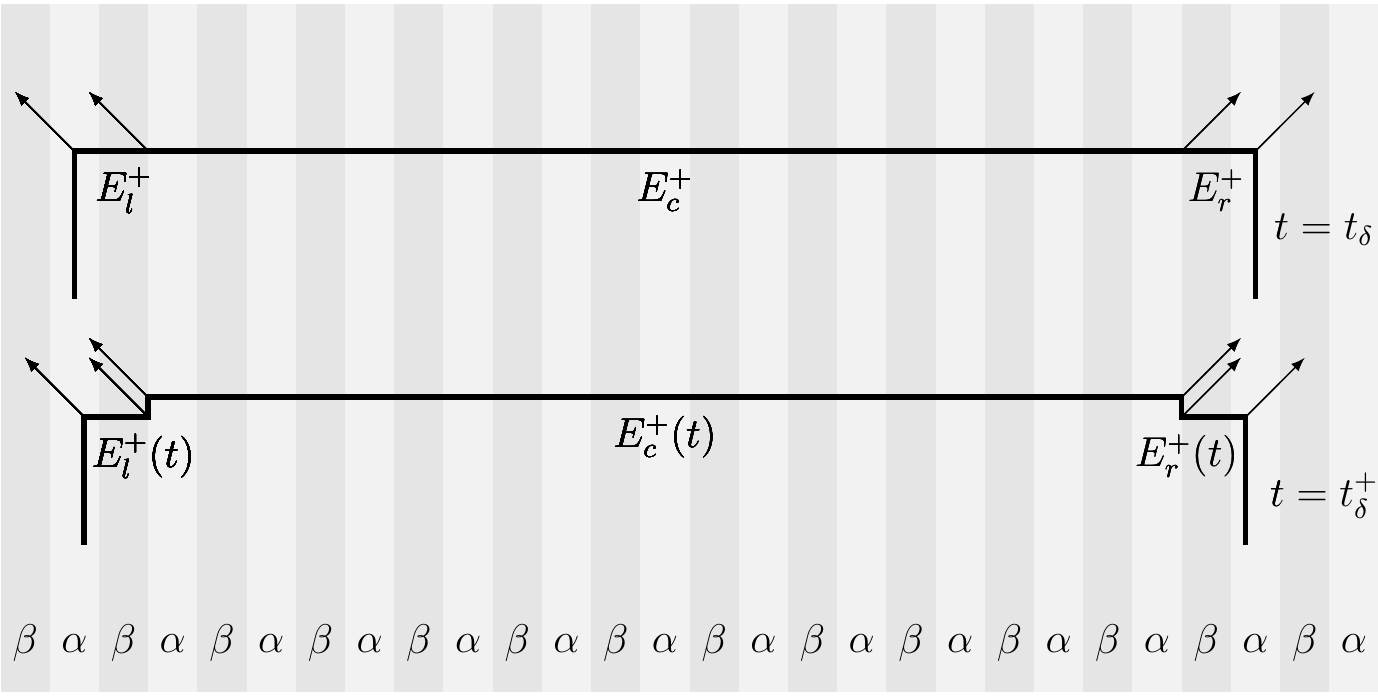}
\caption{Breaking of the horizontal edge at $t=t_\delta$.}\label{fig:fig12}
\end{figure}

Hence, the Cahn--Hoffman 
vector field $n(t)$ evolves continuously after $t_\delta$ by breaking the 
horizontal edges into 
three parts, that we denote by $E_l^\pm(t)$, $E_c^\pm(t)$ and $E_r^\pm(t)$ 
respectively, and the evolution becomes a coordinate polyrectangle 
$P^\varepsilon(t)$  (see Figure \ref{fig:fig12}). 

By symmetry, the lengths and the velocities of the edges $E_l^\pm(t)$, 
$E_c^\pm(t)$ are the same; they will be denoted by 
$\ell_h^\varepsilon(t)$ and 
$v_h^\varepsilon (t)$, respectively.
 
By Section \ref{rk:vert} the small vertical edges with zero 
$\varphi$--curvature are pinned on the 
interfaces $x= \pm x_{N_\varepsilon-1}$, so that, by Remark \ref{e:ln}, the 
long horizontal edges with positive $\varphi$--curvature  $E_c^\pm(t)$
have constant length $\ell_1^\varepsilon = 2x_{N_\varepsilon-1}$, and move 
with velocity
$v_1^\varepsilon(t)= v_c^\varepsilon$.

By Remark \ref{e:latino}, the small horizontal edges $E_l^\pm(t)$ and 
$E_c^\pm(t)$ 
with zero $\varphi$--curvature and length $\ell_h^\varepsilon(t)\in (0, 
\varepsilon/2+ 
\delta(N_\varepsilon))$ move  with velocity 
$v_h^\varepsilon(t)>v_c^\varepsilon(t)$ given by
\[
v_h^\varepsilon(t)=
\begin{cases}
\beta & \text{if}\  \ell_h^\varepsilon(t)\in (0, \varepsilon/2) \\[6pt]
\dfrac{1}{\ell_h^\varepsilon(t)}\left(\dfrac{\varepsilon}{2}\beta+
\left(\ell_h^\varepsilon(t)-\dfrac{\varepsilon}{2}\right)\alpha\right)  &  
\text{if}\   \ell_h^\varepsilon(t)\in(\varepsilon/2, 
\varepsilon)  
\end{cases}
\]
reducing the length $\ell_2^\varepsilon(t)$ of the long vertical edges with 
positive $\varphi$--curvature
\[
\ell_2^\varepsilon(t)=\ell_2^\varepsilon(t_\delta)-\int_{t_\delta}^{t} 
v_h^\varepsilon (s)\, ds \leq \ell_2^\varepsilon(t_\delta)-v_c^\varepsilon 
(t-t_\delta).
\]
On the other hand, the vertical long edges with positive $\varphi$--curvature 
move inward with velocity
\[
v_2^\varepsilon (t)=\frac{2}{\ell_2^\varepsilon(t)}+g_\varepsilon
\geq \frac{2}{\ell_2^\varepsilon(t_\delta)-v_c^\varepsilon 
(t-t_\delta)}+\alpha
\]
so that, if we denote by $t_1$ the time when the vertical long edges with 
positive $\varphi$--curvature reach 
the interfaces $x=\pm x_{N_\varepsilon-1}$, we obtain
\[
2\varepsilon \geq \int_{t_\delta}^{t_1}
\frac{2}{\ell_2^\varepsilon(t_\delta)-v_c^\varepsilon 
(t-t_\delta)}\, dt +\alpha (t_1-t_\delta) \geq \left( 
\frac{2}{\ell_2^\varepsilon(t_\delta)}+\alpha\right)(t_1-t_\delta).
\]
Hence, at $t=t_1$ the evolution is a rectangle $R(x_{N_\varepsilon -1}, 
\ell_2^\varepsilon(t_1))$ where
\[
\ell_2^\varepsilon(t_1)= \ell_2^\varepsilon(t_\delta)-v_c^\varepsilon 
(t_1-t_\delta) = \ell_2^\varepsilon(t_\delta)+ O(\varepsilon), \quad 
\varepsilon \to 0^+.
\]
The (unique) evolution then iterates this ``breaking and recomposing'' motion 
in such
a way that it can be approximate, in Hausdorff topology and locally uniformly 
in 
time, by a family of rectangles $R(\tilde{\ell}_1^\varepsilon (t), 
\tilde{\ell}_2^\varepsilon 
(t))$ satisfying \eqref{f:appc1}, so that the effective motion is a family
of rectangles $R(\ell_1 (t), \ell_2 (t))$  governed by the evolution 
law 
\eqref{f:caseone}.

\medskip

The general results recalled in Remark \ref{r:odedis} can be used to show that 
the effective evolution \eqref{f:evofin} does not depend on the choiche of the 
approximating sequence $R_0^\varepsilon$ of initial data. 

Namely, in Case 1, any coordinate rectangle 
$R_0^\varepsilon$ is calibrable and the (unique) evolution starting from 
$R_0^\varepsilon$ is the family of coordinate rectangles solving 
\eqref{f:rrhsdis}. This evolution has a distance of order $\varepsilon$
from the one starting from
$R(2x_{N_\varepsilon},\ell_{2,0})$ uniformly in time, so that it converges,
as $\varepsilon \to 0^+$ to the same effective evolution.

Similarly, in Case 3, we have that the evolution starting from 
$R_0^\varepsilon$ becomes a 
rectangle with vertical edges with left endpoint on 
$\varepsilon\mathcal{I}_{\beta,\alpha}$ and right endpoint on 
$\varepsilon\mathcal{I}_{\alpha,\beta}$ 
in a time span 
of order $\varepsilon$, possibly breaking and recomposing the horizontal edges 
in the meanwhile. Then, the effective evolution 
of $R_0$ 
is uniquely determined by \eqref{f:caseone}.

On the other hand, if $\ell_{2,0}> -2/\alpha$, then, by
Remark \ref{r:odedis}(iii) and (iv), the position of the 
vertical edges during 
the evolution is confined in the strip $\{x\in 
[x_{N_\varepsilon-1},x_{N_\varepsilon}]\}$, with 
$N_\varepsilon=N_\varepsilon( \ell_{1,0}^\varepsilon)$. 
Hence, at a macroscopic level, the vertical edges are 
pinned, and the effective evolution 
of $R_0$ 
is uniquely determined by \eqref{f:pinn}.

\end{proof}

\begin{figure}[h!]
\includegraphics[width=4cm]{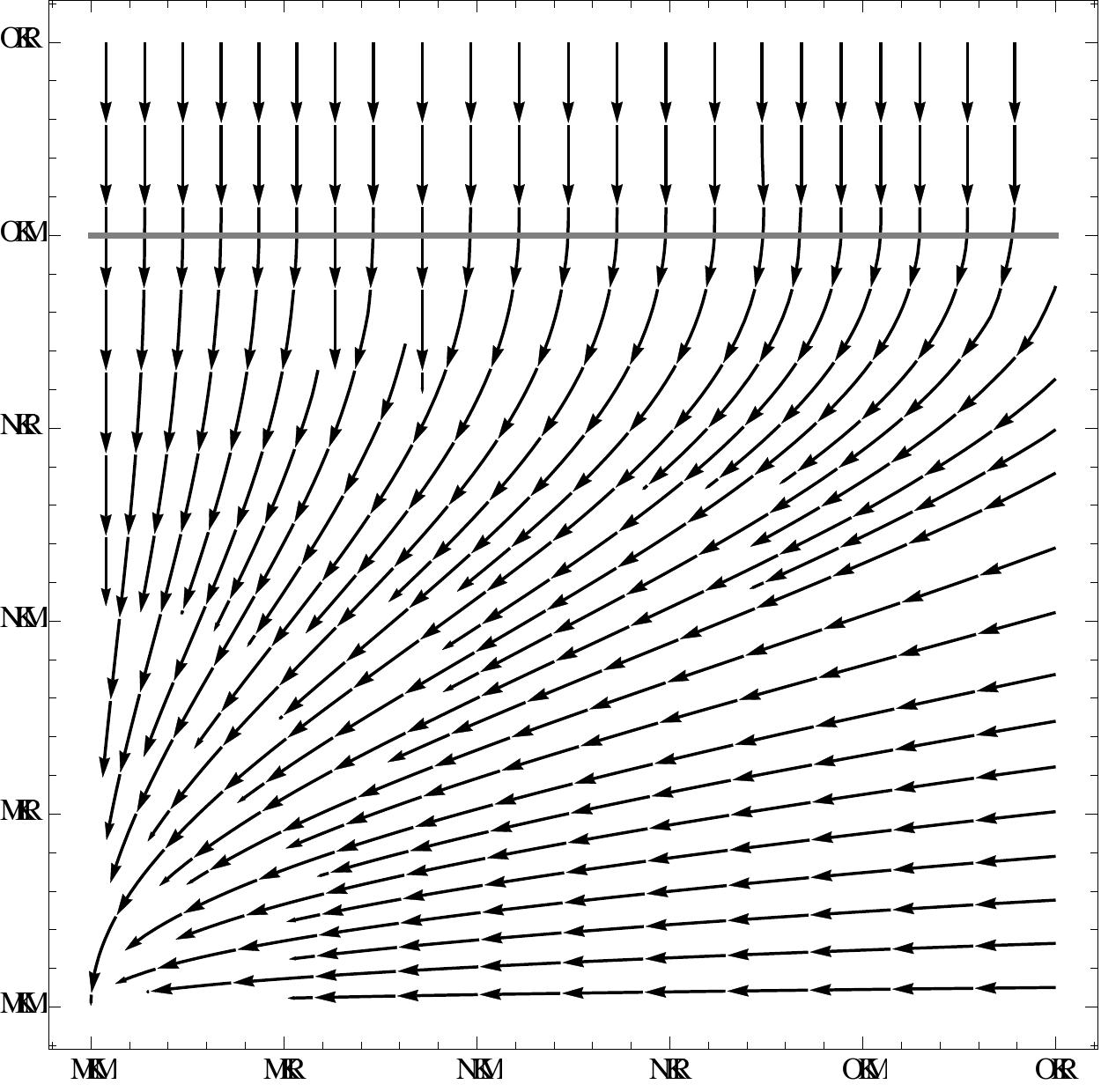}
\quad
\includegraphics[width=4cm]{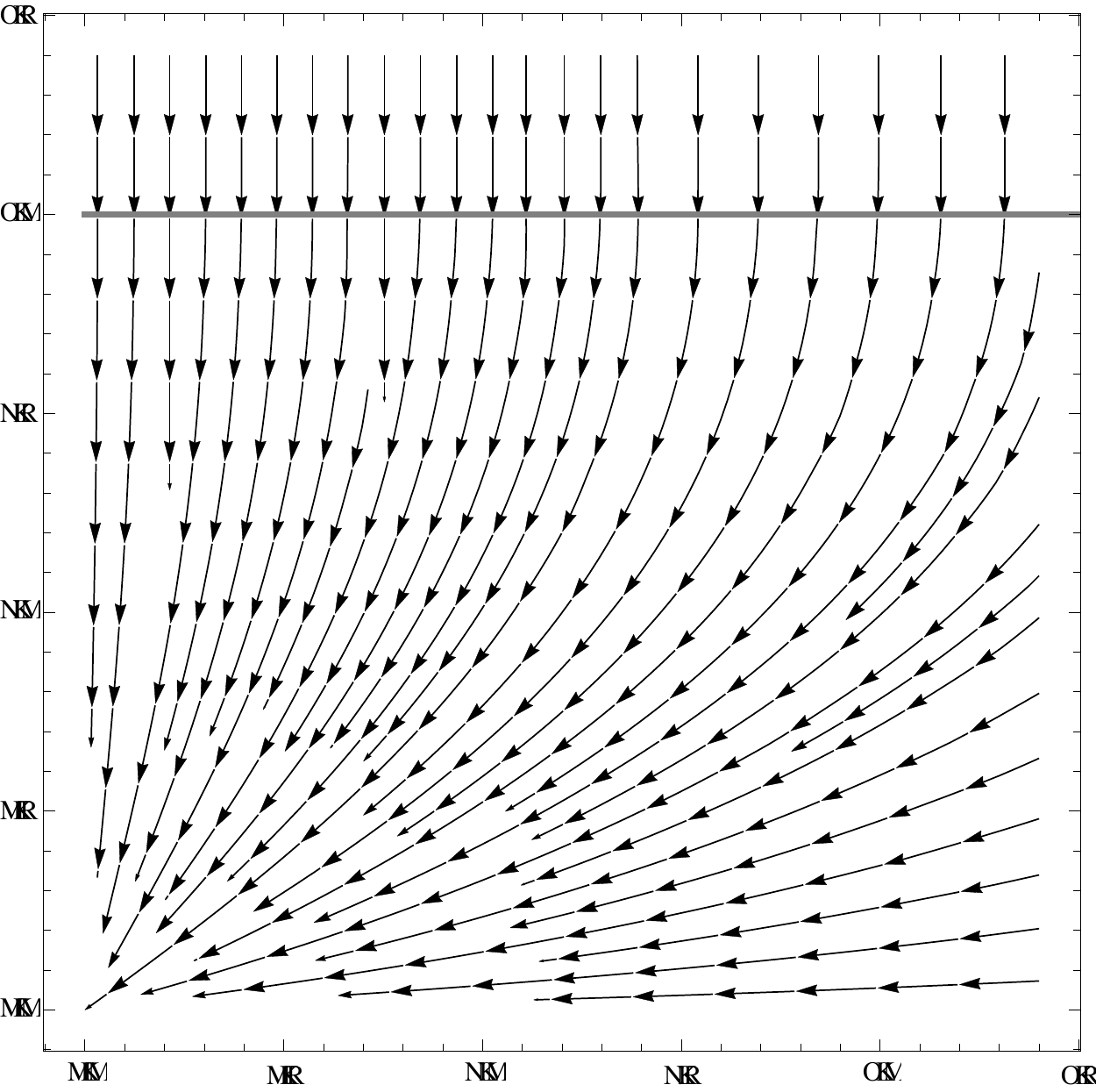}
\quad
\includegraphics[width=4cm]{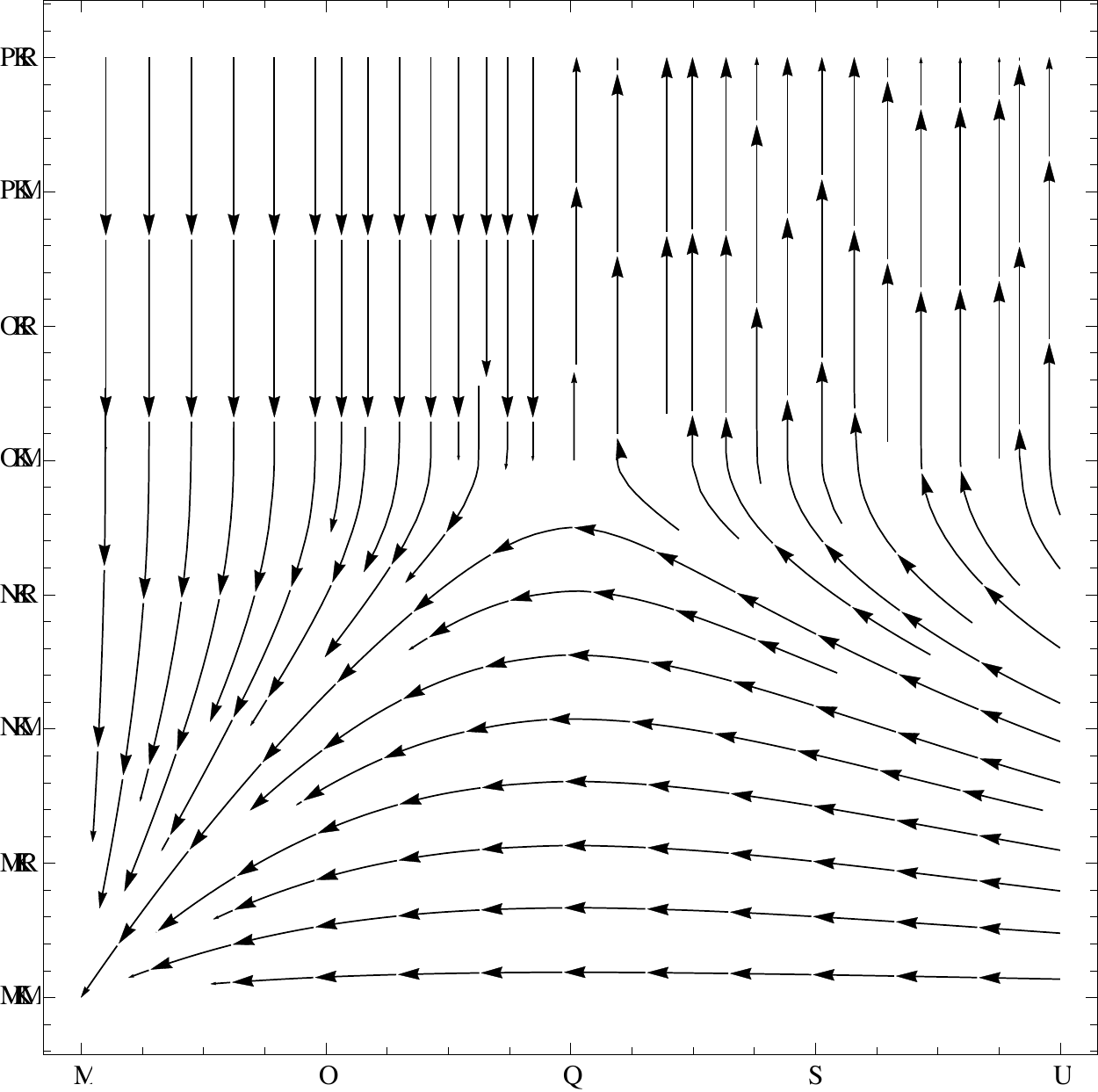}
\caption{Phase portraits of \eqref{f:evofin} for $\alpha+\beta=0$ (left), 
$>0$ (center),
$<0$ (right) }\label{fig:dinamicalim}
\end{figure}

System \eqref{f:evofin} is integrable, and its phase portrait is plotted in 
Figure \ref{fig:dinamicalim}. Notice the pinning effect for long vertical 
edges, and the presence of a half line of nontrivial equilibria for 
$\alpha+\beta<0$.

\begin{rk}[Partial uniqueness of the forced crystalline curvature flow]
We emphasize that, in the proof of Theorem \ref{t:rect}, we have shown that
the forced crystalline curvature flow starting from a coordinate rectangle
of the type $R_0^\varepsilon=R(2x_{N_\varepsilon},\ell_{2,0})$ is unique.
The uniqueness of the solution is essentially based on 
Remark \ref{r:odedis}(iii) and (iv), and it holds for the evolution starting 
from 
any coordinate rectangle with horizontal $\mathcal{C}$--edges (recall 
Definition \ref{d:cedge}).
Namely, by Remark \ref{r:odedis}(iii), uniqueness may fail if and only 
if
the evolution has a ``long'' vertical edge on a unstable discontinuity, but 
this is not the case, because the evolution from a coordinate rectangle with 
horizontal $\mathcal{C}$--edges has pinned ``long'' vertical edges
(see also Proposition \ref{r:uniq} below).
\end{rk}


\subsection{Evolution of polyrectangles}\label{s:poly}

We now extend the previous results to the more general class 
of {\em coordinate polyrectangles}, that is 
those sets whose boundary is a 
closed polygonal curve with edges parallel to the coordinate axes.

Given a coordinate polyrectangle $E$, in what follows we 
will 
denote by $H^0$, 
$H^+$, and $H^-$ (respectively
$V^0$, $V^+$, and $V^-$) the sets of the horizontal (resp. vertical) edges
of $\partial E$ with zero, positive and negative $\varphi$--curvature. 
Moreover we set $H=H^0\cup H^+ \cup H^-$, and $V=V^0\cup V^+ \cup V^-$.

In what follows $\ell$ will denote the length of the edge $L$. 

\begin{rk}\label{r:polyode}
Given a coordinate polyrectangle $E$, the description of the variational 
crystalline curvature flow $E^\varepsilon(t)$ with forcing term 
$g_\varepsilon$ starting from $E$ will be obtained by combining the
calibrability properties of the edges proved in 
Section \ref{s:calibrability} and information about solutions of a
coupled system of ODEs solved by the coordinates of the vertices of $E(t)$
in any interval $I$ in which the number of vertices of $E^\varepsilon(t)$ does 
not change.

More precisely, let $(x_i(t),y_i(t))$, 
$i=1,\ldots,N$, be  
the coordinates of the vertices of the polyrectangles $E^\varepsilon(t)$, 
$t\in I$, ordered clockwise in such a way that
\[
(x_1,y_1)=(x_{N+1},y_{N+1}), \quad 
x_{2k}(t)=x_{2k+1}(t), \ 
y_{2k}(t)=y_{2k-1}(t), \quad k=1,\ldots, 
\frac{N}{2},\ t\in I, 
\]
and the edges of $E^\varepsilon(t)$ are given by
\[
L_i(t)=
\begin{cases}
[x_{i}(t), x_{i+1}(t)]\times \{y_i\} & \text{for $i$ odd}\\
\{x_i\}\times [y_{i}(t), y_{i+1}(t)] & \text{for $i$ even}.
\end{cases}
\]
Then $(x_1(t),\ldots,x_N(t),y_1(t),\ldots,y_N(t))$ is a solution 
in 
$I$ to the system of ODEs
\begin{equation}\label{f:odepoly}
\begin{cases}
x_{2k}'=x_{2k+1}'=-
\dfrac{2}{y_{2k}-y_{2k+1}} 
{\chi}_{L_{2k}}-g_\varepsilon(x_{2k}) \\[10pt]
y_{2k}'=y_{2k+1}'=
-\dfrac{1}{x_{2k}-x_{2k-1}} 
\left(2{\chi}_{L_{2k-1}}+ 
\displaystyle\int_{x_{2k-1}}^{x_{2k}}g_\varepsilon(s)\, 
ds 
\right)
\end{cases}
\quad k=1,\ldots \frac{N}{2}.
\end{equation}

The velocity field in \eqref{f:odepoly} is discontinuous
on the set
\[
\mathcal{D}:=\{(x_1,\ldots,x_N,y_1,\ldots,y_N)\ \text{such that}\ \exists 
i=1,\ldots, N 
\colon\ x_i\in \varepsilon \mathcal{I}\},
\]
so that the discontinuities only affect the motion
of the vertical edges.
We collect here the main features of the solutions to \eqref{f:odepoly}
(see \cite{Fi}, Chapter 2),
written in terms of motion of the edges of $\partial 
E^\varepsilon(t)$, 
$t\in I$. Since the system \eqref{f:odepoly} is autonomous, it is 
enough to 
discuss the properties of local solutions starting from given datum 
$E$.

\begin{itemize}
\item[(i)]
Pinning effect (stable discontinuities). Let
$\ell_p>0$ be the \textsl{pinning threshold} defined by
\[
\ell_p=
\begin{cases}
-\dfrac{2}{\alpha}, & \text{if}\ L\in V^+, \\[4pt]
0, & \text{if}\ L\in V^0, \\[4pt]
\dfrac{2}{\beta}, & \text{if}\ L\in V^-.
\end{cases} 
\]
Then every vertical edge $L\in \partial E$ with $\ell >\ell_p$ and such 
that either $\nu(L)=e_1$ and 
$L\in \varepsilon \mathcal{I}_{\beta,\alpha}$ or $\nu(L)=-e_1$ and 
$L\in \varepsilon \mathcal{I}_{\alpha,\beta}$ is pinned during the evolution
$E^\varepsilon(t)$ for every $t>0$ such that $\ell(t)\geq\ell_p$.

\item[(ii)] Trasversality condition. The motion of a vertical edge 
$L\in 
V^-\cup V^+$ with $\ell< \ell_p$ is uniquely determined until
$\ell(t)< \ell_p$. 

\item[(iii)] Uniqueness condition (unstable discontinuities). The 
uniqueness of 
the local solution 
starting 
from $E$
fails if and only if there is a vertical edge $L\in V^-\cup V^+$ with 
$\ell 
>\ell_p$ 
and such that either $\nu(L)=e_1$ and 
$L\in \varepsilon \mathcal{I}_{\alpha,\beta}$ or $\nu(L)=-e_1$ and 
$L\in \varepsilon \mathcal{I}_{\beta,\alpha}$ (unstable edges). 
If this does not occur, the 
solution is unique until the first time $t_0$ when $E^\varepsilon(t_0)$ has
a unstable edge.
\end{itemize}
\end{rk}

A significant class of coordinate polyrectangles with a well posed 
forced 
evolution is 
the following (see Proposition 
\ref{r:uniq} and Theorem \ref{t:comparison} below).

\begin{Definition}[$\mathcal{C}$--polyrectangles]
Given $\varepsilon>0$, we say that a coordinate polyrectangle 
$E$  is a \textsl{$\mathcal{C}$--polyrectangle} if every horizontal 
edge
of $E$ is a  $\mathcal{C}$--edge (see Definition \ref{d:cedge}).
\end{Definition}

\begin{rk}\label{r:pinnpoli}
By Proposition \ref{p:oninterf}, Proposition \ref{r:hcurvz}, and 
Remark \ref{r:polyode}, every 
$\mathcal{C}$-polyrectangle is calibrable, with velocity of 
an edge
$L$ given by
\begin{equation}\label{f:fcvel}
v_L=
\begin{cases}
\dfrac{\alpha+\beta}{2}+\chi_L\left(\dfrac{2}{\ell}-
\dfrac{(\beta-\alpha)\varepsilon}{4\ell}\right), & L\in H
\\[6pt]
\dfrac{2}{\ell}\chi_L+\gamma_\varepsilon, & L\in V^+\cup V^- , \ \ell 
< 
\ell_p
\\[6pt] 
0, & \text{$L\in V^0$, and $L\in V^- \cup V^+$ with $\ell \geq 
\ell_p$.}
\end{cases}
\end{equation}
(see Figure \ref{fig:fig14}), and the variational 
crystalline curvature flow with forcing term $g_\varepsilon$
of $E$ starts following the rules \eqref{f:odepoly}.
In particular, every edge $L\in V^0$ is pinned,
as well as any edge $L\in V^- \cup V^+$ with length 
$\ell\geq \ell_p$.
\end{rk}

\begin{figure}[h!]
\includegraphics[width=5.5cm]{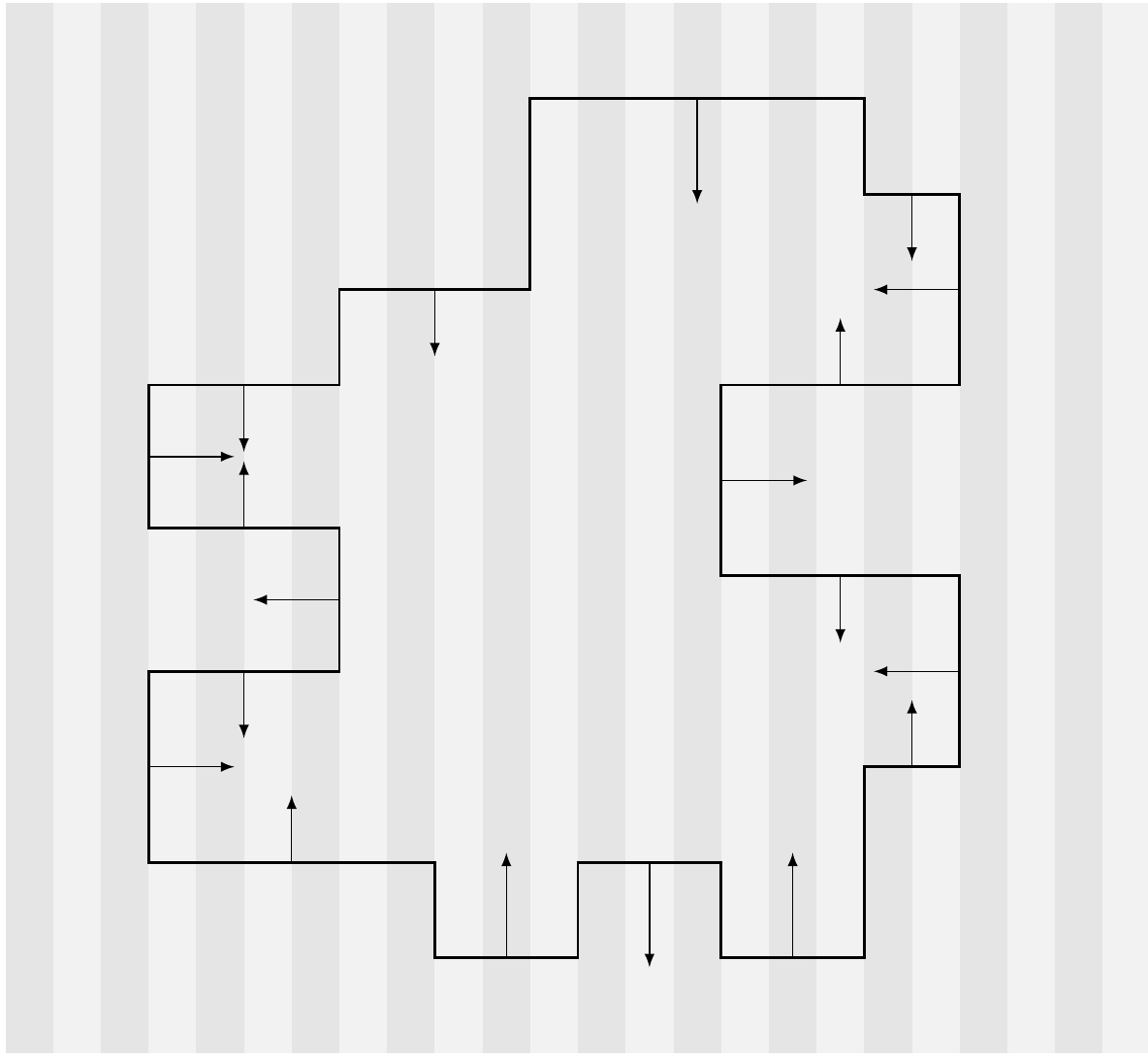}
\caption{A $\mathcal{C}$--polyrectangle with velocities of its edges  
(case $\alpha+\beta>0$) }\label{fig:fig14}
\end{figure}

\begin{teor}[Effective motion of coordinate polyrectangles]\label{t:effpoly}
Let $E_0$ be a coordinate 
polyrectangle.
For $\varepsilon>0$, let $E_0^\varepsilon$ be a coordinate 
polyrectangle
such that $d_H(E_0,E_0^\varepsilon)<\varepsilon$. 
Then there exists a variational crystalline curvature flow with forcing term 
$g_\varepsilon$
starting from $E_0^\varepsilon$.
Moreover, there exists a family of coordinate polyrectangles $E(t)$ such that 
every variational crystalline curvature flow $E^\varepsilon(t)$ with forcing 
term $g_\varepsilon$
of $E_0^\varepsilon$ 
converges to $E(t)$ in the Hausdorff topology and 
locally uniformly in $t$, for 
$\varepsilon \to 0$.
 
Denoting by $\ell(t)$ the length of an edge $L(t)\subseteq \partial 
E(t)$, the normal velocity $v_L(t)$
of $L(t)$ is given by
\begin{equation}\label{f:effevpoly}
v_L(t)=
\begin{cases}
\,\left\langle g+\dfrac{2}{\ell(t)} \right\rangle, & \text{if $L(t)\in V^+$, 
and}
\  \ell(t)<-2/\alpha, \\[8pt]
\,\left\langle g-\dfrac{2}{\ell(t)} \right\rangle, & \text{if $L(t)\in V^-$, 
and}
\  \ell(t)<2/\beta, \\[8pt]
\phantom{-}0, & \text{for the other vertical edges,} \\[4pt]
\dfrac{2}{\ell(t)}\chi_{L}+\dfrac{\alpha+\beta}{2}, & \text{if $L(t)$ is 
horizontal.}
\end{cases}
\end{equation}
The dynamics \eqref{f:effevpoly} is valid until $\ell(t)>0$. If an edge 
vanishes at $t=t_0$, the evolution proceedes reinitializing the ODEs by starting
from $E(t_0)$.
\end{teor}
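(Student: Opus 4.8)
The plan is to bootstrap the polyrectangle case from the rectangle case (Theorem \ref{t:rect}) by exploiting the fact that each edge of a $\mathcal{C}$--polyrectangle evolves, to leading order in $\varepsilon$, independently of the others according to the rules summarized in Remark \ref{r:pinnpoli}. First I would reduce to the case in which $E_0^\varepsilon$ is a $\mathcal{C}$--polyrectangle: given an arbitrary approximating sequence $E_0^\varepsilon$, the same arguments as in the final part of the proof of Theorem \ref{t:rect} show that in a time span of order $\varepsilon$ every horizontal edge either becomes a $\mathcal{C}$--edge (possibly breaking into a central $\mathcal{C}$--edge plus short edges of length $O(\varepsilon)$, as in Case 3) or shrinks with a velocity that, after relabeling, agrees with \eqref{f:effevpoly} up to $O(\varepsilon)$; hence the limit evolution is unaffected by this initial layer. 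So assume $E_0^\varepsilon$ is a $\mathcal{C}$--polyrectangle. By Remark \ref{r:pinnpoli} the flow starts according to \eqref{f:odepoly}, with vertical edges in $V^0$ and long vertical edges in $V^\pm$ pinned, short vertical edges in $V^\pm$ moving as in \eqref{f:fcvel}, and horizontal edges moving with velocity $\chi_L(2/\ell - (\beta-\alpha)\varepsilon/(4\ell)) + (\alpha+\beta)/2$.

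Next I would analyze the evolution on a maximal interval $I$ in which the combinatorial type (number and adjacency of edges) of $E^\varepsilon(t)$ does not change. On such an interval the coordinates of the vertices solve \eqref{f:odepoly}; as in Case 1 of Theorem \ref{t:rect}, the $x$--coordinates of the horizontal edges sweep across the periods of $g_\varepsilon$ and, writing the motion of each horizontal edge $L=[x_{2k-1},x_{2k}]\times\{y\}$ as $y' = -\frac{1}{x_{2k}-x_{2k-1}}(2\chi_{L_{2k-1}} + \int g_\varepsilon)$, the integral term is $\frac{\alpha+\beta}{2}(x_{2k}-x_{2k-1}) + O(\varepsilon)$, giving in the limit $v_L = \frac{2}{\ell}\chi_L + \frac{\alpha+\beta}{2}$. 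For a short vertical edge in $V^+\cup V^-$, the endpoints are $x$--coordinates of adjacent horizontal edges; its velocity $\frac{2}{\ell}\chi_L + \gamma_\varepsilon$ depends on where it sits, and as in Case 1 the averaging over a period produces, in the limit, the harmonic means $\langle g \pm 2/\ell\rangle$ appearing in \eqref{f:effevpoly}, with the truncation at $\ell = -2/\alpha$ (resp.\ $2/\beta$) coming precisely from the pinning threshold $\ell_p$ of Remark \ref{r:polyode}(i) and the fact that a vertical edge reaching length $\ell_p$ with the stable orientation gets pinned. The equilipschitz bounds needed to extract a convergent subsequence come, edge by edge, by comparison with the explicit forced rectangle solutions \eqref{f:forzc} for $\gamma=\alpha$ and $\gamma=\beta$, exactly as in Theorem \ref{t:rect}; uniqueness of the limit follows because the limiting ODE system \eqref{f:effevpoly} has a unique solution (its right-hand side is Lipschitz in the relevant region, away from $\ell=0$ and away from the pinning thresholds, where the transversality conditions of Remark \ref{r:polyode}(ii) apply).

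The genuinely new difficulty compared with the rectangle case is bookkeeping the \emph{topological changes}: an edge can vanish ($\ell(t_0)\to 0$), two edges can merge, or — because of the mesoscopic facet breaking described in Section \ref{s:calibrability} — a long horizontal edge can split. I would handle vanishing edges by the reinitialization clause in the statement: when $\ell(t)\to 0$ at $t_0$, the set $E(t_0)$ is again a coordinate polyrectangle with fewer edges, and one restarts the analysis (one must check $E^\varepsilon(t)\to E(t_0)$ uniformly near $t_0$, which follows from the Hausdorff convergence already established on $[0,t_0)$ together with the uniform Lipschitz bound on the vertex trajectories). For the mesoscopic breaking and recomposing, the key point — already isolated in Remark \ref{r:symm} and used in Case 3 of Theorem \ref{t:rect} — is that the Cahn--Hoffmann field evolves \emph{continuously} through the breaking time, so the microscopic oscillation in the number of vertices (segments of length $O(\varepsilon)$ appearing and disappearing) does not affect the Hausdorff limit; one shows, as in Case 3, that between consecutive ``recomposition'' times the evolution is $O(\varepsilon)$-close to a polyrectangle whose edges obey the homogenized law.

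The main obstacle, in my estimation, is proving that the limit $E(t)$ is well-defined \emph{globally} in time and independent of the approximating sequence across these topological transitions: locally on each interval of fixed combinatorial type the homogenization is a routine adaptation of Theorem \ref{t:rect}, but one must rule out pathological accumulation of transition times and verify that the reinitialized limit does not depend on the $\varepsilon$-scale details of how edges broke, merged or vanished. This is where the $\mathcal{C}$--polyrectangle structure is essential: it guarantees (via Remark \ref{r:polyode}(i) and (iii)) that no \emph{long} vertical edge ever sits on an unstable interface, so the approximating flows $E^\varepsilon(t)$ are themselves unique (cf.\ Proposition \ref{r:uniq}) and the only ambiguity is the genuinely microscopic one that washes out in the limit.
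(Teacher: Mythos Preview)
Your proposal is correct and follows essentially the same strategy as the paper: reduce to $\mathcal{C}$--polyrectangles, use the edge velocities of Remark \ref{r:pinnpoli} and the ODE system \eqref{f:odepoly}, treat the mesoscopic breaking/recomposing of horizontal edges as an $O(\varepsilon)$ effect via Remark \ref{r:symm} and the calibrability results of Section \ref{s:calibrability}, and pass to the limit edge by edge exactly as in the three cases of Theorem \ref{t:rect} (arithmetic mean for horizontal edges, harmonic mean with pinning threshold for short vertical edges). Your write-up is in fact more systematic than the paper's own proof, which is a brief sketch deferring most details to Section \ref{s:rect}; the only caveat is some loose phrasing (e.g.\ ``the $x$--coordinates of the horizontal edges sweep across the periods'' and ``the endpoints are $x$--coordinates of adjacent horizontal edges'' have the roles of $x$ and $y$ slightly garbled), but the underlying computations you indicate are the right ones.
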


\begin{proof}

The existence of a variational crystalline curvature flow with forcing 
term
$g_\varepsilon$ starting from a coordinate polyrectangle 
$E^\varepsilon_0$ and 
the fact that the effective evolution does not depend on the choice of 
the 
approximating data can
be obtained with arguments similar to the ones proposed in detail in 
Section 
\ref{s:rect}. We roughly sketch here the main features of such an evolution 
starting from a $\mathcal{C}$--polyrectangle
$E^\varepsilon_0$. 

By Remark \ref{r:pinnpoli}, the ``long'' vertical edges of $\partial 
E^\varepsilon_0$ with non zero 
$\varphi$--curvature, and 
all the vertical edges with zero $\varphi$--curvature are pinned. The  
vertical edges in $V^+$ with length $\ell<-2/\alpha$ move inward, while the 
vertical edges in $V^-$ with length $\ell<2/\beta$ move 
outward.

By Proposition \ref{r:hcurvz}, every edge $L\in H^0$ with pinned 
vertices has 
velocity $(\alpha+\beta)/2$, 
while every edge $L\in H^0$  with  an adjacent moving vertical edge breaks 
istantaneously
in a small part $L_\varepsilon$ with length $\varepsilon$, and a remaining part 
calibrated with velocity $(\alpha+\beta)/2$. During the evolution 
$L_\varepsilon$ shrinks and disappears in a time of order $\varepsilon$,
while the remaing part has pinned vertices and moves vertically with velocity 
$(\alpha+\beta)/2$. 

Similarly, every edge $L\in H^+\cup H^-$ with pinned vertices has velocity 
given by \eqref{f:fcvel}, 
while every edge $L\in H^+\cup H^-$ with  an adjacent moving vertical edge 
shrinks until the calibrability conditions of Propositions 
\ref{p:gencalibr} and \ref{p:gencalibrneg} hold, and possibly it breaks 
following the rules of Remark \ref{r:symm}.  

In any case, the possible ``breaking and recomposing" motion occurs in a lapse 
of 
time of order $\varepsilon$, so that the evolution $E^\varepsilon (t)$ 
can be 
approximated by a family 
of $\mathcal{C}$--polirectangles $\tilde{E}^\varepsilon (t)$ 
converging as $\varepsilon \to 0$ to a coordinate polyrectangle $E(t)$ 
in 
Hausdorff 
topology and locally uniformly in time. The effective 
velocities \eqref{f:effevpoly} of the edges of $E(t)$ are obtained  taking the 
limit, as $\varepsilon \to 0$ in \eqref{f:fcvel}.
In particular, the arguments for getting the velocities of the ``short''
vertical edges are the same of Case 1 in Section \ref{s:rect}.
 \end{proof}

The variational crystalline curvature flow with forcing term $g_\varepsilon$
starting from a $\mathcal{C}$--poly\-rectangle is unique and satisfies 
a 
comparison principle. 

\begin{prop}[Uniqueness]\label{r:uniq}
Given $\varepsilon>0$, the 
variational crystalline curvature flow $E^\varepsilon(t)$ with forcing 
term 
$g_\varepsilon$
starting from a $\mathcal{C}$--polyrectangle $E$ is unique.
\end{prop}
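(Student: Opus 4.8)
The plan is to reduce uniqueness of the geometric flow to two facts: uniqueness of the Filippov solution of the vertex system \eqref{f:odepoly} on any time interval in which the number of vertices of $E^\varepsilon(t)$ does not change, and the fact that the topological changes along the flow — vertical edges vanishing, horizontal edges breaking and recomposing — are themselves uniquely determined and never create an \emph{unstable} vertical edge in the sense of Remark \ref{r:polyode}(iii).

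First I would record how the evolution starts. By Remark \ref{r:pinnpoli}, a $\mathcal{C}$--polyrectangle $E$ is calibrable, the flow begins obeying \eqref{f:odepoly} with the edge velocities \eqref{f:fcvel}, every edge $L\in V^0$ and every edge $L\in V^+\cup V^-$ with $\ell\ge\ell_p$ is pinned, and the only vertical edges that move are the short ones in $V^+$ (inward) and in $V^-$ (outward); by the transversality condition Remark \ref{r:polyode}(ii), the motion of each such short edge is uniquely determined as long as $\ell<\ell_p$.

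The core point is that a $\mathcal{C}$--polyrectangle has no unstable vertical edge. The abscissa $\overline{x}$ of a vertical edge $L$ coincides with the abscissa of an endpoint of one of the two horizontal edges meeting $L$, and that edge is a $\mathcal{C}$--edge, so by Definition \ref{d:cedge} its convexity factor dictates whether that endpoint, hence $\overline{x}$, lies on $\varepsilon\mathcal{I}_{\beta,\alpha}$ or on $\varepsilon\mathcal{I}_{\alpha,\beta}$. Matching this assignment corner by corner with the outer normal $\nu(L)\in\{\pm e_1\}$, one checks that $L$ always falls in the stable case of Remark \ref{r:polyode}(i) and never in the unstable case of Remark \ref{r:polyode}(iii) — a short but necessary bookkeeping over the four types of corner. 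Since pinned vertical edges keep $\overline{x}$ fixed, and a short edge that grows past $\ell_p$ gets pinned again on the same (hence still stable) interface, no unstable vertical edge is created while the number of vertices is constant, and Remark \ref{r:polyode}(iii) then yields uniqueness of the solution of \eqref{f:odepoly}, and therefore of the flow, on each such interval.

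It remains to treat the topological changes. When a vertical edge vanishes at some $t_0$ one reinitializes from $E^\varepsilon(t_0)$, which is again a $\mathcal{C}$--polyrectangle, so the previous step applies once more. When a moving short vertical edge drags an adjacent horizontal edge, that edge is no longer a $\mathcal{C}$--edge, and by Propositions \ref{p:fracon}, \ref{p:gencalibr} and \ref{p:gencalibrneg} it shrinks until its calibrability threshold is attained and then breaks; by Remark \ref{r:symm} the breaking instant and the splitting are uniquely prescribed — the minimizing Cahn--Hoffmann field evolving continuously through the event — and the pieces produced are either $\mathcal{C}$--edges or small edges of length $O(\varepsilon)$ which, by Remark \ref{e:latino} and Proposition \ref{r:hcurvz}, shrink and disappear in finite time, again leaving only stable vertical edges. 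Iterating over the (locally finite in time) sequence of these events gives uniqueness of the flow at all times. The main obstacle is precisely this last step: one must check that each breaking/recomposing event is forced and that the configuration it produces still contains no unstable vertical edge, which is where the full calibrability analysis of Section \ref{s:calibrability} is needed; the corner-by-corner stability check in the third step is the other point requiring care.
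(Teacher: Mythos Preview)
Your overall strategy matches the paper's: reduce uniqueness to Remark~\ref{r:polyode}(iii) by showing that no unstable vertical edge ever appears along the flow. The initial bookkeeping (that all vertical edges of a $\mathcal{C}$--polyrectangle sit on stable interfaces, new edges coming from breaking are in $V^0$ and pinned on stable interfaces, etc.) is in line with the paper.

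The gap is in your third paragraph, in the sentence ``a short edge that grows past $\ell_p$ gets pinned again on the same (hence still stable) interface''. A short edge in $V^+\cup V^-$ is \emph{moving}, so it is not on any interface; the phrase ``the same interface'' is meaningless here, and the claim as written does not address the real danger. Concretely, take $L\in V^+$ with $\nu(L)=e_1$: while $\ell<\ell_p=-2/\alpha$ it moves inward, but if at some moment $\ell$ exceeds $\ell_p$ while $L$ is inside the $\alpha$ phase, then the velocity $2/\ell+\alpha$ becomes negative and $L$ starts drifting \emph{outward} toward an interface in $\varepsilon\mathcal{I}_{\alpha,\beta}$, which is precisely the unstable case of Remark~\ref{r:polyode}(iii). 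Nothing in your argument rules this out.

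The paper closes this gap by a contradiction argument you do not have: it assumes such a $t_0$ exists, notes that for $t$ just below $t_0$ one has $g_\varepsilon=\alpha$ on $L(t)$ and $\ell(t_0-\sigma)<-2/\alpha$, and then analyzes the \emph{adjacent horizontal edges} at $t_0$. Since $\partial E(t_0)$ must be calibrable, those horizontal edges are either in $H^0$ with length $\varepsilon/2$ and velocity $\beta>0$, or in $H^+$ with $\ell+\ell_\alpha-\ell_\beta\le 4/(\beta-\alpha)$ and hence strictly positive velocity (Proposition~\ref{p:fracon}). Either way both adjacent horizontal edges move inward, so $\ell(t)$ is \emph{decreasing} just before $t_0$, contradicting $\ell(t_0)>-2/\alpha>\ell(t_0-\sigma)$. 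This velocity computation for the neighbours is the missing idea; your corner-by-corner stability check at $t=0$ does not propagate to the moving short edges without it.
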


\begin{proof}
By Remark \ref{r:polyode}(iii), uniqueness may fail if and only if 
there exists
$t_0> 0$ such that $\partial E(t_0)$ has a unstable edge.
We will show that this never occurs when the initial datum $E$ is a 
$\mathcal{C}$--polyrectangle.

By Remarks  \ref{r:polyode} and  \ref{r:pinnpoli}, the evolution starts with 
all the vertical 
edges pinned on interfaces in 
$\varepsilon \mathcal{I}$ that are stable equilibria of the dynamics, except 
for the ``short" edges with nonzero $\varphi$--curvature. Moreover, the 
evolution may generate, for $t>0$ new vertical edges, due to the breaking 
phenomenon of the horizontal edges. Nevertheless, every new vertical edge
$L$ belongs to $V^0$ and it is pinned on a stable discontinuity of 
$g_\varepsilon$.

Hence, if we assume by contraddiction that there exists $t_0>0$ such 
that $\partial 
E(t_0)$ has a unstable edge, then it is the evolution 
$L(t_0)$ of a ``short" edge (that is $L\in V^-\cup V^+$ with $\ell< 
\ell_p$)
enlarging during the evolution.
More precisely, if we assume that $L(t_0)\in V^+$ and $\nu(L(t_0))=e_1$
(the other cases being similar), the following properties should be 
satisfied:
\[
L(t_0)\in\varepsilon\mathcal{I}_{\alpha,\beta},\quad 
\ell(t_0)>-2/\alpha,
\]
and there exists $\sigma>0$ such that
\[
g_\varepsilon =\alpha \ 
\text{on}\ L(t)
\ \text{for}\ t \in (t_0-\sigma, t_0), \quad 
\ell(t_0-\sigma)<-2/\alpha\ .
\]
Since 
$\partial E(t_0)$ needs to be calibrable, the horizontal edges adjacent to
$L(t_0)$ have either zero $\varphi$--curvature, length 
$\varepsilon/2$ and velocity $\beta$, or  positive $\varphi$--curvature, 
and length satisfying $\ell+\ell_\alpha-\ell_\beta \leq 
4/(\beta-\alpha)$ (see 
Proposition \ref{p:fracon}), and hence velocity
\[
v = \frac{2}{\ell}+\frac{\alpha+\beta}{2}+
\frac{\beta-\alpha}{2 \ell}
(\ell_\beta-\ell_\alpha) =
\frac{1}{2\ell}(4-(\beta-\alpha)(\ell+\ell_\alpha-
\ell_\beta)+2 
\beta\ell)> 0.
\]
In both cases
the horizontal edges adjacent to $L(t_0)$ have strictly positive 
velocity, 
in contraddiction with the fact that $L(t)$ enlarges for $t<t_0$ close 
enought 
to $t_0$.
\end{proof}

%

\begin{teor}[Comparison for forced flows]\label{t:comparison}
For a given $\varepsilon>0$,
let $E$ and $F$ be two $\mathcal{C}$--polyrectangles such that 
$E\subseteq F$, and let $E^\varepsilon(t)$, $F^\varepsilon(t)$ be the 
variational crystalline 
curvature 
flow of $E$ and $F$, respectively, with forcing term $g_\varepsilon$. 
Then
\begin{itemize}
\item[(i)] $E^\varepsilon(t)\subseteq F^\varepsilon(t)$ for every $t$.
\item[(ii)] the distance $d^\varepsilon(t)$ between $\partial 
E^\varepsilon(t)$ 
and $\partial F^\varepsilon(t)$
satisfies
\[
d^\varepsilon(t)\geq d^\varepsilon(0) \quad \forall t \geq 0, \qquad 
\text{and}\qquad 
d^\varepsilon(t_1)\geq d^\varepsilon(t_2)-\varepsilon \quad \forall 
t_1\geq t_2 
> 0.
\]
\end{itemize} 
\end{teor}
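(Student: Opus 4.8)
The plan is to prove the two statements simultaneously by tracking the behavior of the $\varepsilon$-level variational flows, using the pinning structure established for $\mathcal{C}$-polyrectangles. First I would prove (i) at the $\varepsilon$-level, i.e. that $E^\varepsilon(t)\subseteq F^\varepsilon(t)$ for all $t$. By Proposition \ref{r:uniq}, both flows are unique, so it suffices to show that the set inclusion cannot be violated. Suppose by contradiction that there is a first time $t_0$ at which $\partial E^\varepsilon(t_0)$ touches $\partial F^\varepsilon(t_0)$. At a touching point the outward normals of the two boundaries coincide, so the two boundaries share a common edge direction there; the key geometric fact to exploit is that at a touching configuration the inner set $E^\varepsilon$ has curvature factor $\chi_L$ no larger than that of the outer set $F^\varepsilon$ along the touching edge (the inner set is ``more convex'' there), and—crucially—the forcing term $g_\varepsilon(x)$ at the touching point is the \emph{same} for both, since it depends only on the spatial position $x$, not on which set we are evolving. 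Hence the normal velocity of $\partial E^\varepsilon$ at the touching point, given by \eqref{f:fcvel} (or more generally by the calibration formulae of Section \ref{s:calibrability}), is no larger than that of $\partial F^\varepsilon$, which prevents the boundaries from crossing. The only subtlety is at instants where the geometry changes (an edge breaking, or a new vertex appearing): here I would appeal to the fact, established in the proof of Theorem \ref{t:effpoly}, that breaking/recomposing happens in a time lapse of order $\varepsilon$ and that throughout this lapse the approximating $\mathcal{C}$-polyrectangles $\tilde E^\varepsilon$, $\tilde F^\varepsilon$ retain the ordering; so the ordering passes through these singular times by continuity.

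For part (ii), the monotonicity $d^\varepsilon(t)\geq d^\varepsilon(0)$ would follow from a refinement of the same comparison argument: instead of comparing $E^\varepsilon(t)$ with $F^\varepsilon(t)$ directly, I would compare $E^\varepsilon(t)$ with a translate, or more precisely observe that the $\varphi$-distance $d^{F^\varepsilon(t)}_\varphi$ restricted to $\partial E^\varepsilon(t)$ is a spatial quantity whose infimum can only change through the boundary evolution. The point is that whenever two boundary portions come to $\varphi$-distance $d$ with matching normals, the inner one moves no faster outward than the outer one moves outward (again because $g_\varepsilon$ is position-dependent only and the curvature terms are ordered), so $d^\varepsilon$ is nondecreasing \emph{while no geometry change occurs}. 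Across a geometry change—an edge of length $\propto\varepsilon$ being created and reabsorbed—the distance can drop, but only by the length scale involved, which is $O(\varepsilon)$; this yields $d^\varepsilon(t_1)\geq d^\varepsilon(t_2)-\varepsilon$ for $t_1\geq t_2>0$. Here one uses that, by Remark \ref{r:pinnpoli} and the structure of $\mathcal{C}$-polyrectangles, all the ``long'' vertical edges are pinned and the only mechanism decreasing $d^\varepsilon$ is the transient small horizontal facet of length at most $\varepsilon/2+\delta(N_\varepsilon)=O(\varepsilon)$ created in the breaking step (Remark \ref{e:latino}, Remark \ref{r:symm}).

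The main obstacle I expect is the bookkeeping at the geometry-change times: one must verify that the finite list of ``events'' (edge vanishing, facet breaking, facet recomposing) is locally finite in time, that between consecutive events the flow is governed by the smooth ODE system \eqref{f:odepoly} for which classical comparison for ODEs with monotone right-hand side applies, and that the set-inclusion and the distance estimate survive each event. For the inclusion this is clean because at the event time the configurations are still ordered by continuity of the Hausdorff distance; for the distance estimate one has to isolate precisely which events can cause a decrease and bound the total decrease over any time interval by $\varepsilon$ (not a sum of $\varepsilon$'s), which is where one uses that at most one such transient facet per edge is present at a time and that a recomposition restores the configuration up to $O(\varepsilon)$, as quantified in the proof of Theorem \ref{t:rect}, Case 3. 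Once the $\varepsilon$-level statements are in hand, both (i) and (ii) for the effective flow follow by passing to the limit $\varepsilon\to0$ using the Hausdorff convergence $E^\varepsilon(t)\to E(t)$, $F^\varepsilon(t)\to F(t)$ locally uniformly in $t$ from Theorem \ref{t:effpoly}, the inclusion being closed under Hausdorff limits and the distance being continuous, with the $\varepsilon$ in the second inequality disappearing in the limit.
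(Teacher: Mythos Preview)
Your approach diverges from the paper's on both parts, and for (ii) there is a genuine gap.

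For (i), the paper does not run a first-touching-time argument. Instead it perturbs the forcing term to $g_\varepsilon+\sigma$, invokes the First Comparison Principle of Giga--Gurtin \cite{GG} to get $E^\varepsilon_\sigma(t)\subseteq F^\varepsilon(t)$ for the strictly ordered flows, and then lets $\sigma\to 0$ using uniqueness (Proposition~\ref{r:uniq}) and Filippov's continuous dependence. This sidesteps the delicate bookkeeping at breaking/recomposing times that your argument would require. Your touching argument is plausible in spirit, but the claim that ``the inner set has $\chi_L$ no larger than that of the outer set'' needs a precise formulation for polyrectangles with edges of different lengths meeting along a common segment, and the passage through geometry-change events is only sketched.

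For (ii) your mechanism is wrong. You argue that $d^\varepsilon$ is nondecreasing during smooth evolution because the velocities at the two closest boundary points are correctly ordered, and that the only possible loss of size $\varepsilon$ comes from transient facet breaking. But at a pair of \emph{closest} (not touching) points the spatial positions differ, so the forcing values $g_\varepsilon$ at those two points are in general \emph{different}; there is no velocity ordering to invoke. The paper's proof instead exploits translation invariance: the flow commutes with arbitrary vertical shifts $\gamma e_2$ and with horizontal shifts in $\varepsilon\Z\, e_1$ (periodicity of $g_\varepsilon$). Since the $\varphi$-distance between coordinate polyrectangles is realized along a coordinate direction, one applies (i) to $E^\varepsilon(t_2)+d(t_2)e_2\subseteq F^\varepsilon(t_2)$ in the vertical case, getting exact monotonicity, and to $E^\varepsilon(t_2)+k\varepsilon e_1\subseteq F^\varepsilon(t_2)$ with $k=\lfloor d(t_2)/\varepsilon\rfloor$ in the horizontal case, which is the true source of the $\varepsilon$ defect. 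Facet breaking plays no role here.

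Finally, your last paragraph misreads the statement: Theorem~\ref{t:comparison} is entirely at fixed $\varepsilon>0$; there is no effective-flow version to pass to.
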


\begin{proof}
Given $\sigma>0$, let $E^\varepsilon_\sigma(t)$ be the 
variational crystalline curvature 
flow of $E$ with forcing term $g_\varepsilon+\sigma$.
Following the proof of the First Comparison Principle in \cite{GG}, 
mainly 
based on geometric arguments, we obtain that 
$E^\varepsilon_\sigma(t)\subseteq F^\varepsilon(t)$. 

Moreover, by
Theorem \ref{t:effpoly}, Proposition \ref{r:uniq}, and Theorem 2.8.2
in \cite{Fi}, we infer that for every $n\in\N$ 
there exists $\sigma_n>0$ such that, for every $t>0$, the evolutions 
$E^\varepsilon_{\sigma_n}(t)$  converge, as $n\to +\infty$, in 
Hausdorff 
topology to the variational crystalline curvature flow 
$E^\varepsilon(t)$ with 
forcing 
term $g_\varepsilon$ starting from $E$. Hence (i) follows from a 
passage to the 
limit for the inclusions 
$E^\varepsilon_{\sigma_n}(t)\subseteq F^\varepsilon(t)$.

In order to prove (ii), we underline that for every $t>0$ the minimal 
distance $d(t)$ between $\partial E^\varepsilon(t)$ and $\partial 
F^\varepsilon(t)$ is attained at points joined by a segment parallel 
to a 
coordinate axis, so that either $E(t)+d(t)e_1 \subseteq F(t)$ or  
$E(t)+d(t)e_2 
\subseteq F(t)$. On the other hand, since $E$ is a 
$\mathcal{C}$--polyrectangle, we have that, for every $\gamma\in\R$, 
$E^\varepsilon(t)+\gamma e_2$ is the variational crystalline curvature 
flow 
with 
forcing 
term $g_\varepsilon$ starting from $E+\gamma\, e_2$, and, for every 
$k\in\Z$,
$E^\varepsilon(t)+k\varepsilon e_1$  is the variational crystalline 
curvature 
flow 
with 
forcing 
term $g_\varepsilon$ starting from 
$E+k\varepsilon\, e_1$. 

Let $t_1 \geq 0$ be given. If $E(t_1)+d(t_1)e_2 \subseteq F(t_1)$, 
then, by (i) and the invariance of the flow under vertical 
translations,
we have that $E(t)+d(t_1)e_2 \subseteq F(t)$, and 
hence $d^\varepsilon(t_1)\geq d^\varepsilon(t)$, for every $t \geq 
t_1$. 
If, instead, $E(t_1)+d(t_1)e_1 \subseteq F(t_1)$, with the same 
argument 
we obtain that (ii) holds and that $d^\varepsilon(t_1)\geq 
d^\varepsilon(t)$
for every $t \geq t_1$ if and only if $d(t_1)=k\varepsilon$.

\end{proof}

\subsection{Evolution of more general sets}

The macroscopic effect of the underlying oscillating forcing term 
$g_\varepsilon$ on the crystalline curvature flow starting from any smooth,
connected bounded set $C_0$ may be captured in the following way.

For every $\varepsilon>0$, let $\mathcal{C}_\varepsilon$ be the set of 
all 
$\mathcal{C}$--polyrectangles. For every $E_\varepsilon(0) \in 
\mathcal{C}_\varepsilon$,  let $E_\varepsilon(t)$ be the unique variational 
crystalline curvature flow with forcing term $g_\varepsilon$ starting from
$E_\varepsilon(0)$. We define the families of sets
\[
C^+_\varepsilon(t)= \bigcap_{\substack{C_0 \subseteq P_\varepsilon(0) \\
P_\varepsilon(0)\in \mathcal{C}_\varepsilon }} P_\varepsilon(t), \qquad 
C^-_\varepsilon(t)= \bigcup_{\substack{Q_\varepsilon(0)\subseteq C_0 \\
Q_\varepsilon(0)\in \mathcal{C}_\varepsilon }} Q_\varepsilon(t),
\] 
and
\[
\begin{split}
C^+(t) & = \limsup_{\varepsilon \to 0^+}C^+_\varepsilon(t)=
\bigcap_{\eta>0}\left(\bigcup_{0<\varepsilon<\eta}C^+_\varepsilon(t)\right), 
 \\
C^-(t) & = \liminf_{\varepsilon \to 0^+}C^-_\varepsilon(t)=
\bigcup_{\eta>0}\left(\bigcap_{0<\varepsilon<\eta}C^-_\varepsilon(t)\right).
\end{split} 
\]
By Theorem \ref{t:comparison}, for every $P_\varepsilon(0)$, 
$Q_\varepsilon(0)\in \mathcal{C}_\varepsilon$ such that  
$Q_\varepsilon(0)\subseteq C_0 \subseteq P_\varepsilon(0)$, the evolutions 
satisfy $Q_\varepsilon(t)\subseteq P_\varepsilon(t)$ for every $t\geq 0$.
Hence we have
$C^-_\varepsilon(t)\subseteq C^+_\varepsilon(t)$, $t\geq 0$.

Moreover, denoting by $P(t),Q(t)$ the effective evolution starting 
from coordinate 
polyrectangles $P(0), Q(0)$, and evolving with the law 
\eqref{f:effevpoly}, by Theorem 
\ref{t:effpoly} we obtain that
\[
\bigcup_{Q(0)\subseteq C_0 } Q(t)\subseteq C^-(t)\subseteq C^+(t)\subseteq  
\bigcap_{C_0 \subseteq P(0)} P(t),
\qquad t\geq 0.
\]
Notice that $C^-(0)=C^+(0)=C_0$.
When $C^-(t)= C^+(t)=:C(t)$ for $t > 0$, this procedure defines the limit 
evolution starting from $C_0$ and driven, at a mesoscopic scale, by crystalline 
curvature flow with forcing term $g_\varepsilon$.

If $C_0$ is a bounded convex subset of $\R^2$ with nonempty interior, then 
$C^-(t)=C^+(t)$ for all $t\geq 0$, and
we 
can explicitely describe the motion $C(t)$ starting from $C_0$.
 
Approximation far from the "extreme points": constant vertical shift.
Let $\xi\in \partial C_0$ be such that $\nu(\xi)$ is not a coordinate 
vector.
In this case, we can choose approximating sequences of 
$\mathcal{C}$--polyrectangles with all edges with zero $\varphi$--curvature in 
a suitable neighborhood  of $\xi$ (see Figure \ref{fig:fig4}, left). 
The 
evolution by \eqref{f:effevpoly} 
of such 
an approximation is the following: the vertical edges are pinned, while the 
horizontal edges moves with velocity $(\alpha+\beta)/2$. At a macroscopic 
level, the effect is a vertical motion of $\partial C_0$ near $\xi$ with 
velocity $(\alpha+\beta)/2$ .
 
 \begin{figure}[h!]
 \includegraphics[height=3cm]{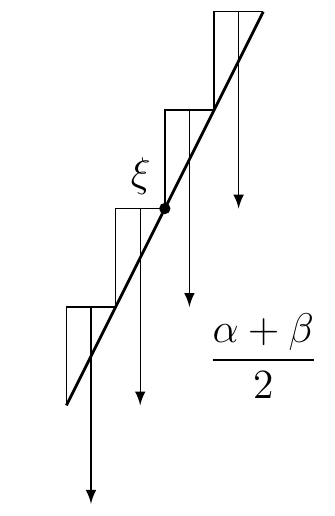}
 \qquad \qquad
 \includegraphics[height=3cm]{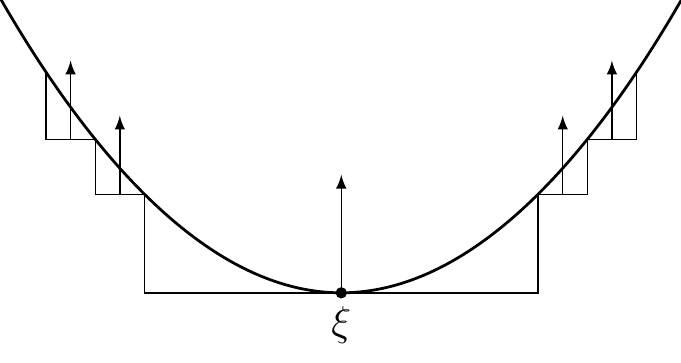}
 \caption{Approximation far from (left) and near to (right) an extreme 
 point}\label{fig:fig4}
 \end{figure}
 
Approximation near the "extreme points": flat evolution.
Let $\xi\in \partial C_0$ be such that $\nu(\xi)=\pm e_1$.
In this case, we can choose approximating sequences of 
$\mathcal{C}$--polyrectangles with one horizontal edge with positive 
$\varphi$--curvature in 
a suitable neighborhood  of $\xi$. Then the evolution $C(t)$, $t>0$, has a 
horizontal edge $L(t)$ with length $l(t)$ moving vertically with 
velocity 
$2/\ell(t) + 
(\alpha+\beta)/2$.
The same arguments show that the evolution $C(t)$, $t>0$, has flat vertical
edges ``generated by'' the points $\xi\in \partial C_0$ be such that 
$\nu(\xi)=\pm e_2$ and moving horizontally with velocity 
$H_g(\ell(t))$ 
 (see \eqref{f:harmean}).
 
 In conclusion, the effective evolution $C(t)$ of a convex set $C_0$ can be 
 depicted as follows.
 \begin{itemize}
 \item[-] The arcs with zero $\varphi$--curvature moves vertically with 
 velocity $(\alpha+\beta)/2$. Let us denote by $C_1(t)$, the set obtained with 
 this translation.  
 \item[-]
 There is an instantaneous generation of four flat edges parallel to the 
 coordinate axes and with the extreme points constrained on the set $C_1(t)$.
 The horizontal edges moves vertically with velocity $2/\ell + 
 (\alpha+\beta)/2$,
 while the vertical ones moves horizontally with velocity $H_g(\ell(t))$.
 \end{itemize}
 
 For example, the effective evolution starting from a circle is depicted in 
 Figure \ref{fig:figcircle}.
 
 \begin{figure}[h!]
 \includegraphics[height=4.5cm]{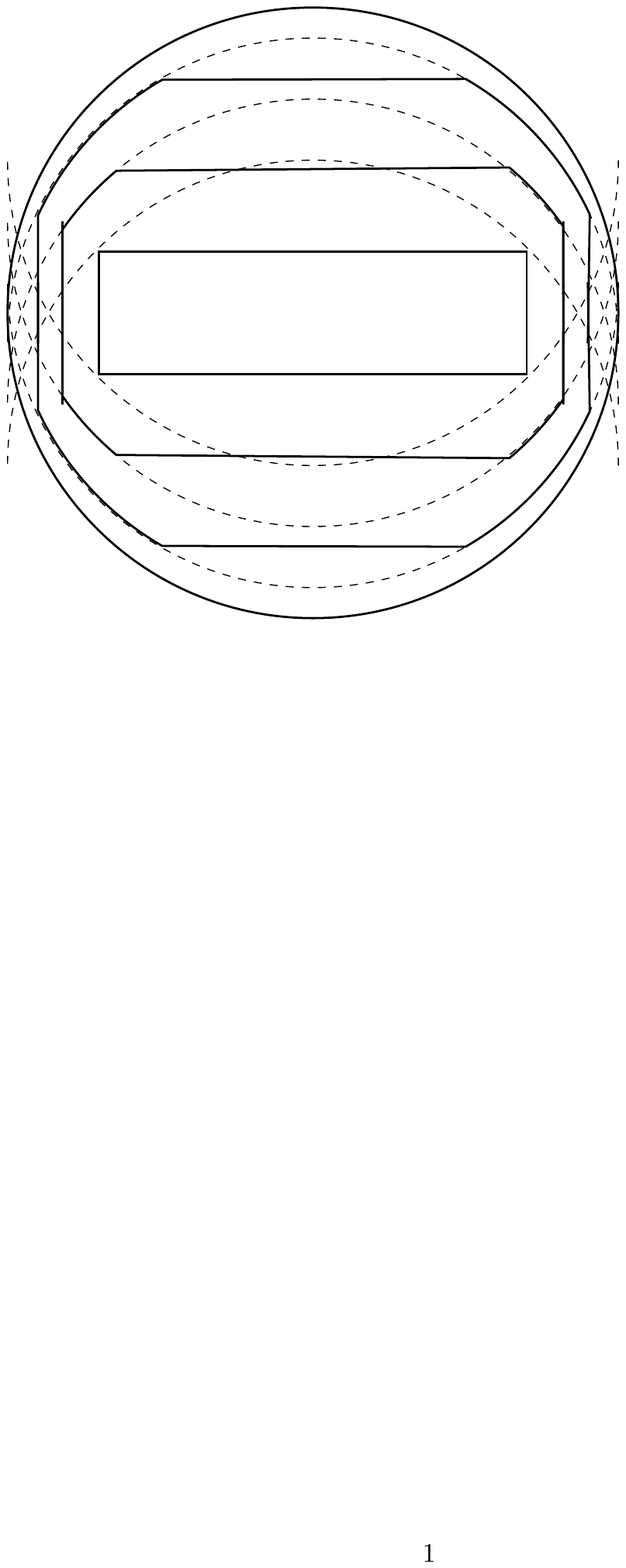}
 \caption{Effective evolution of a circle.}\label{fig:figcircle}
 \end{figure}


\end{document}